\definecolor{verylight}{gray}{0.97}
\definecolor{light}{gray}{0.9}
\definecolor{medium}{gray}{0.85}
\definecolor{dark}{gray}{0.6}
 \def\NZQ{\mathbb}               % the font for N,Z,Q,R,C
 \def\QQ{{\NZQ Q}}
 \def\ZZ{{\NZQ Z}}
 \def\frk{\mathfrak}               % font for "Fraktur"
 \def\pp{{\frk p}}
 \def\mm{{\frk m}}
 \def\ab{{\mathbf a}}
 \def\bb{{\mathbf b}}
 \def\xb{{\mathbf x}}
 \def\cb{{\mathbf c}}
 \def\pb{{\mathbf p}}
 \def\qb{{\mathbf q}}
 \def\ub{{\mathbf u}}
 \def\mb{{\mathbf m}}
 \def\vb{{\mathbf v}}
  \def\sb{{\mathbf s}}
 \def\opn#1#2{\def#1{\operatorname{#2}}} % to make operators
 \opn\chara{char} \opn\length{\ell} \opn\pd{pd} \opn\rk{rk}
 \opn\projdim{proj\,dim} \opn\injdim{inj\,dim} %\opn\rank{rank}
 \opn\depth{depth} \opn\grade{grade} \opn\height{height}
 \opn\embdim{emb\,dim} \opn\codim{codim}
 \opn\Tr{Tr} %\opn\bigrank{big\,rank}
 \opn\superheight{superheight}\opn\lcm{lcm}
 \opn\trdeg{tr\,deg}%\emph{
 \opn\reg{reg} \opn\lreg{lreg} \opn\ini{in} \opn\lpd{lpd}
 \opn\size{size} \opn\sdepth{sdepth}
 \opn\link{link}\opn\fdepth{fdepth}\opn\lex{lex}
 \opn\type{type}
 \opn\gap{gap}
 \opn\arithdeg{arith-deg}
 \opn\astab{astab}
  \opn\dstab{dstab}
  \opn\bigheight{bigheight}
 \opn\div{div} \opn\Div{Div} \opn\cl{cl} \opn\Cl{Cl}
 \opn\Spec{Spec} \opn\Supp{Supp} \opn\supp{supp} \opn\Sing{Sing}
 \opn\Ass{Ass} \opn\Min{Min}\opn\Mon{Mon}
 \opn\Ann{Ann} \opn\Rad{Rad} \opn\Soc{Soc}
 \opn\Im{Im} \opn\Ker{Ker} \opn\Coker{Coker} \opn\Am{Am}
 \opn\Hom{Hom} \opn\Tor{Tor} \opn\Ext{Ext} \opn\End{End}
 \opn\Aut{Aut} \opn\id{id}
 \opn\nat{nat}
 \opn\pff{pf}%   \pf exists already
 \opn\Pf{Pf} \opn\GL{GL} \opn\SL{SL} \opn\mod{mod} \opn\ord{ord}
 \opn\Gin{Gin} \opn\Hilb{Hilb}\opn\sort{sort}
 \opn\PF{PF}\opn\Ap{Ap}
 \opn\mult{mult}
 \opn\aff{aff}
 \opn\relint{relint} \opn\st{st}
 \opn\lk{lk} \opn\cn{cn} \opn\core{core} \opn\vol{vol}  \opn\inp{inp} \opn\nilpot{nilpot}
 \opn\link{link} \opn\star{star}\opn\lex{lex}\opn\set{set}
 \opn\width{wd}
 \opn\Fr{F}
 \opn\QF{QF}
 \opn\G{G}
 \opn\type{type}\opn\res{res}
 \opn\conv{conv}
 \opn\gr{gr}
 \def\pot#1#2{#1[\kern-0.28ex[#2]\kern-0.28ex]}
 \opn\dirlim{\underrightarrow{\lim}}
 \opn\inivlim{\underleftarrow{\lim}}
 \let\sect=\cap
 \let\dirsum=\oplus
 \let\tensor=\otimes
 \let\iso=\cong
 \let\to=\rightarrow
 \let\To=\longrightarrow
 \def\Implies{\ifmmode\Longrightarrow \else
         \unskip${}\Longrightarrow{}$\ignorespaces\fi}
 \def\implies{\ifmmode\Rightarrow \else
         \unskip${}\Rightarrow{}$\ignorespaces\fi}
 \def\iff{\ifmmode\Longleftrightarrow \else
         \unskip${}\Longleftrightarrow{}$\ignorespaces\fi}
 \newtheorem{Theorem}{Theorem}[section]
 \newtheorem{Lemma}[Theorem]{Lemma}
 \newtheorem{Corollary}[Theorem]{Corollary}
 \newtheorem{Proposition}[Theorem]{Proposition}
 \newtheorem{Remark}[Theorem]{Remark}
 \newtheorem{Example}[Theorem]{Example}
 \newtheorem{Definition}[Theorem]{Definition}
  \newtheorem{Claim}[Theorem]{Claim}
 \let\epsilon\varepsilon
 \let\kappa=\varkappa
 \def\qed{\ifhmode\textqed\fi
       \ifmmode\ifinner\quad\qedsymbol\else\dispqed\fi\fi}
 \def\textqed{\unskip\nobreak\penalty50
        \hskip2em\hbox{}\nobreak\hfil\qedsymbol
        \parfillskip=0pt \finalhyphendemerits=0}
 \def\dispqed{\rlap{\qquad\qedsymbol}}
 \opn\dis{dis}
 \def\pnt{{\raise0.5mm\hbox{\large\bf.}}}
 \opn\Lex{Lex}
\newcommand{\dist}{\operatorname{dist}}
\newcommand{\bbZ}{\mathbb{Z}}
\newcommand{\bbR}{\mathbb{R}}
\newcommand{\m}{\mathfrak{m}}
\DeclareMathOperator{\tr}{{tr}}
\DeclareMathOperator{\rank}{{rank}}
\newcommand{\xxi}{\xb^{\mb}}
\newcommand{\xxip}{\xb^{\mb'}}
\newcommand{\xxipp}{\xb^{\mb''}}
\begin{document}

\title{Measuring the non-Gorenstein locus of Hibi  rings and normal affine semigroup rings}
\author{J\"urgen Herzog$^{1}$}

\address{$^{1}$Fachbereich Mathematik, Universit\"at Duisburg-Essen, % Campus Essen,
45117, Essen, Germany} \email{juergen.herzog@uni-essen.de}

\author{Fatemeh Mohammadi$^{2}$}
\address{$^{2}$School of Mathematics, University of Bristol, BS8 1TW, Bristol, UK}
%\email{\{fatemeh.mohammadi,janet.page\}@bristol.ac.uk}
\email{fatemeh.mohammadi@bristol.ac.uk}
%\address{$^{3}$School of Mathematics, University of Bristol, BS8 1TW, Bristol, UK}

\author{Janet Page$^{3}$}
\address{$^{3}$School of Mathematics, University of Bristol, Bristol, BS8 1TW, UK, and the Heilbronn Institute for Mathematical Research, Bristol, UK}
\email{janet.page@bristol.ac.uk}

\dedicatory{ }

\thanks{}

\subjclass[2010]{Primary 13F20; Secondary  13H10}
%		13H10   	Special types (Cohen-Macaulay, Gorenstein, Buchsbaum, etc.)
%		13D02   	Syzygies, resolutions, complexes
%		05E40   	Combinatorial aspects of commutative algebra
%		16S36   	Ordinary and skew polynomial rings and semigroup rings

%		14M25   	Toric varieties, Newton polyhedra [See also 52B20]
%		13A02   	Graded rings
%		13F20   	Polynomial rings and ideals; rings of integer-valued polynomials
%		13A18   	Valuations and their generalizations
%		06A11   	Algebraic aspects of posets

\keywords{}

\maketitle

\setcounter{tocdepth}{1}
%\tableofcontents

\noindent{\bf Abstract.} {The trace of the canonical module of a Cohen-Macaulay ring  describes its non-Gorenstein locus. We study the trace of the canonical module of a Segre product of algebras, and we apply our results to compute the non-Gorenstein locus of toric rings. We provide several sufficient and necessary conditions for Hibi rings and normal semigroup rings to be Gorenstein on the punctured spectrum.}
\section{Introduction}

Let $R$ be a local or graded Cohen-Macaulay ring which admits a canonical module $\omega_R$. The trace  of an $R$-module $M$, denoted  $\tr(M)$, is the sum of all ideals $\varphi(M)$, where the sum is taken over all $R$-module homomorphisms $\varphi \:\; M\to R$.  It is noticed in \cite{HHSTraceofthecanonical} that the non-Gorenstein locus of $R$ is the closed subset of $\Spec(R)$ which is given by the set of prime ideals containing $\tr(\omega_R)$.  It follows that the height of $\tr(\omega_R)$ is a good measure for the non-Gorenstein locus of $R$. For example, $R$ is Gorenstein on the punctured spectrum (the open subset $\Spec(R)\backslash\{\mm\}$ of $\Spec(R)$) if and only if $\tr(\omega_R)$ is primary to the (graded) maximal ideal $\mm$ of $R$. In this note, we study the trace of the canonical module of Segre products of algebras, Hibi rings, and normal affine semigroup rings.

\smallskip

\noindent{\bf 1.1. Hibi rings.} In 1987, Hibi \cite{HDisplat} introduced a class of algebras which nowadays are called Hibi rings. They are defined using finite posets and naturally appear in various algebraic and combinatorial contexts; see for example \cite{herzog2005distributive}, \cite{ene2011monomial}, \cite{Howe2005} and \cite{Kim2018}.  Hibi rings are toric $K$-algebras defined over a field $K$.  They are  normal Cohen-Macaulay domains and their defining ideal admits a quadratic Gr\"obner basis. Recently, more subtle properties of Hibi rings have been studied. For example, Miyazaki \cite{miyazaki2018almost} classified level and almost Gorenstein Hibi rings,  and Page \cite{page2019frobenius} studied the Frobenius complexity of Hibi rings.

The combinatorics of Hibi rings are governed by their defining posets. Given a finite poset $P$ and a field $K$, the Hibi ring associated to $P$ and $K$, which we denote by $K[P]$, is the $K$-algebra generated by the monomials associated to poset ideals of $P$, see Definition~\ref{def} for details. Therefore, it is natural to ask how algebraic properties of $K[P]$ are reflected by properties of the poset $P$. A classical result of Hibi \cite[Corollary 3.d]{HDisplat} says that $K[P]$ is Gorenstein if and only if $P$ is a pure poset, that is, all maximal chains of $P$ have the same length. In \cite{HHSTraceofthecanonical}, Herzog, Hibi, and Stamate call a ring as above nearly Gorenstein if $\tr(\omega_R) =\m$, and they classify all nearly Gorenstein Hibi rings. Indeed, they show that $K[P]$ is nearly Gorenstein if and only if all connected components $P_i$  of $P$ are pure and $|\rank P_i-\rank P_j|\leq 1$ for all $i$ and $j$.

One of the main results of this paper is that $K[P]$ is Gorenstein on the punctured spectrum if and only if each  connected component of $P$ is pure; see Theorem~\ref{thm:Janet} and its Corollary~\ref{punctured}. Theorem~\ref{thm:Janet} also shows that in this case the trace of $\omega_R$ is a power of the maximal ideal. This  property is no longer valid for general toric rings which are Gorenstein on the punctured spectrum, as we show in Example~\ref{contrast}. The proof of Theorem~\ref{thm:Janet} is based on the explicit combinatorial description of the canonical and anti-canonical modules of a Hibi ring and on Theorem~\ref{TraceForSegreProductOfGorensteinRings} in which the trace of the canonical module of a Segre product of Gorenstein rings is computed. More generally, it is shown in Theorem~\ref{tracesegre} that the trace of the canonical module for a Segre product of Cohen-Macaulay toric rings  can be computed, up to a high enough truncation, by the traces of the canonical modules of the factors of the Segre product. Together with Lemma~\ref{height}, this result allows us to compute the height of the trace ideal of the canonical module of the Segre product.

Theorem~\ref{thm:Janet} implies the surprising fact that if $P$ is a connected poset and $K[P]$ is not Gorenstein, then $\height(\tr(\omega_{K[P]}))<\dim K[P]$. In  other words, for a connected poset $P$,  $K[P]$ is Gorenstein if and only if it is Gorenstein on the punctured spectrum, see Corollary~\ref{inparticular}. Note that if  $P$ is connected, then  $K[P]$  is not a proper Segre product of Hibi rings. Thus, one may ask more generally whether a Cohen-Macaulay toric ring,  which is not a proper Segre product of toric rings, is Gorenstein if and only if it is Gorenstein on the punctured spectrum. In Example~\ref{not a Segre product}, we show that this is not the case.

By applying a result of Miyazaki \cite{Miyazaki},  it follows from our Theorem~\ref{thm:Janet} that if $K[P]$ is Gorenstein on the punctured spectrum, then it is also a level ring. For toric rings which are not Hibi rings, this is not the case, as we show in Example~\ref{contrast}. In fact, there exist connected posets $P$ with the property that the non-Gorenstein locus of $K[P]$ may have arbitrarily large dimension. Indeed, we show in Corollary~\ref{ab} that for any given integers $a,b$ with $4\leq a<b$,  there exists a poset $P$ with $\height(\tr(\omega_{K[P]}))=a$ and $\dim K[P]=b$.

\smallskip

\noindent{\bf 1.2. Normal semigroup rings.}
In the last section, we study normal simplicial semigroup rings. Given a field $K$, and a rational polyhedral cone $\sigma$, one can associate the semigroup ring $R=K[S_\sigma]$ generated by integral points $\sigma\cap\mathbb{Z}^n$.
We will focus on the case that $\sigma \subset \ZZ^n$ is simplicial, that is, that $\sigma$ is cone with $n$ extremal rays.  We will say these are spanned by the primitive integral vectors $\ab_1,\ldots,\ab_n\in \ZZ^n$, where by primitive we mean that the coordinates of each $\ab_i$ have a $\gcd$ of 1.
It follows from a famous theorem of Hochster \cite{hochster1972rings}, that $R$  is normal and Cohen-Macaulay. Danilov and Stanley (see for example \cite[Theorem 6.3.5]{BrunsHerzogBook}) showed that the canonical module $\omega_R$  is the ideal in $R$ whose $K$-basis consists of the monomials $\xb^\ab$ with $\ab$ in the relative interior of $\sigma$.

On the other hand, the cone $\sigma$ can also be described by its inner normal vectors $\ub_1,\ldots,\ub_n$.  The vectors  $\ub_i$ are the integral vectors whose coordinates have a $\gcd$ of $1$, which satisfy the property that $\ab\in\sigma$ if and only if $\langle \ub_i,\ab\rangle \geq 0$ for $i=1,\ldots,n$. In our situation, $\omega_R$ has a nice presentation.  Indeed, if we denote by $\tau$ the subset of $\ZZ^n$ given by $\ab$ such that $\langle \ub_i,\ab\rangle \geq 1$, then the $K$-basis of $\omega_R$ consists of the monomials $\xb^\ab$ with $\ab\in \tau\sect \ZZ^n$. In the case that $\sigma$ is a simplicial cone, $\tau$ is also a cone.  This observation is crucial for the rest of the section (and is in general not valid for non-simplicial cones).  In  Proposition~\ref{prop:cone}, we observe that $R$ is Gorenstein if and only if the cone point of $\tau$ is an integral vector, and we can compute this cone point explicitly.  Theorem~\ref{integralpointsonrays} provides a lower bound for the height of the trace of $\omega_R$, and shows that $R$ is Gorenstein on the punctured spectrum if and only if there exist integral points on every extremal ray of  $\tau$. We translate this property into numeric conditions involving the coordinates of the cone point $\bb$ and the coordinates  of the vectors $\ab_j$ on the extremal rays of $\sigma$, see Corollary~\ref{first condition} and Corollary~\ref{special case}. Alternatively, in Proposition~\ref{thm:integral points} we provide a necessary condition, in terms of  the matrix whose rows are the inner normal vectors of $\sigma$,  for the existence of integral points on the extremal rays of $\tau$. This allows us to show that certain normal simplicial semigroup rings are not Gorenstein on the punctured spectrum, as is demonstrated by Example~\ref{exam:Janet}.

\section{The trace of the canonical module for Segre products}
In this section we develop some of the algebraic  tools which will be used  in the next section, and introduce trace ideals.  We refer to  \cite{HHSTraceofthecanonical} for a more complete introduction of the trace of the canonical module.

For any $R$-module $M$, its \textbf{trace} denoted by $\tr_R(M)$, or $\tr(M)$ when there is no confusion, is the sum of the ideals $\varphi(M)$ for $\varphi \in \Hom_R(M,R)$.  Namely, we have:
$$
\tr(M) := \sum_{\varphi \in \Hom_R(M,R)} \varphi(M).
$$

We note if $M_1 \cong M_2$ then $\tr(M_1) = \tr(M_2)$, so while the canonical module $\omega_R$ is unique only up to isomorphism, its trace is unique.  We are particularly interested in studying $\tr(\omega_R)$, as this measures the non-Gorenstein locus of $R$.  Namely,

\begin{Lemma}[\cite{HHSTraceofthecanonical} Lemma 2.1]
Let $\pp \in \Spec(R)$. Then $R_\pp$ is not a Gorenstein ring if and only if
$$
\tr(\omega_R) \subset \pp.
$$
\end{Lemma}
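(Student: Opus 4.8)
The plan is to reduce to the local case by localizing at $\pp$, and then to invoke the standard fact that a Cohen--Macaulay local ring with canonical module is Gorenstein precisely when its canonical module is free, equivalently when the trace of its canonical module is the unit ideal.

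First I would record two compatibility statements. Since $\omega_R$ is a finitely generated (indeed finitely presented) $R$-module, for every $\pp\in\Spec(R)$ one has $\Hom_R(\omega_R,R)_\pp\cong\Hom_{R_\pp}((\omega_R)_\pp,R_\pp)$; consequently every $R_\pp$-homomorphism $(\omega_R)_\pp\to R_\pp$ is, up to a unit, of the form $\varphi_\pp$ for some $\varphi\in\Hom_R(\omega_R,R)$. As localization is exact and commutes with sums of submodules, it follows that $\tr(\omega_R)_\pp=\tr_{R_\pp}((\omega_R)_\pp)$. Second, the localization of a canonical module is again a canonical module, $(\omega_R)_\pp\cong\omega_{R_\pp}$, and $R_\pp$ is again Cohen--Macaulay. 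Combining these, $\tr(\omega_R)_\pp=\tr_{R_\pp}(\omega_{R_\pp})$.

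Next, for a Cohen--Macaulay local ring $(A,\nn)$ with canonical module $\omega_A$, I would prove the chain of equivalences: $A$ is Gorenstein $\iff \omega_A\cong A \iff \tr_A(\omega_A)=A$. The first equivalence is the classical characterization of Gorenstein local rings. For the second, if $\omega_A\cong A$ then the identity map lies in $\Hom_A(\omega_A,A)$ and has image $A$, so $\tr_A(\omega_A)=A$. Conversely, if $\tr_A(\omega_A)=A$, then $\tr_A(\omega_A)\not\subseteq\nn$, so some $\varphi\in\Hom_A(\omega_A,A)$ is surjective; since $A$ is free this surjection splits, giving $\omega_A\cong A\oplus N$ for some $A$-module $N$. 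Applying $\Hom_A(-,\omega_A)$ and using the standard identity $\Hom_A(\omega_A,\omega_A)\cong A$ yields $A\cong\omega_A\oplus\Hom_A(N,\omega_A)$ as $A$-modules. But $A$ is local, hence indecomposable as a module over itself, and $\omega_A\neq 0$; therefore the second summand vanishes and $\omega_A\cong A$.

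Finally I would assemble the pieces. If $\tr(\omega_R)\subseteq\pp$, then $\tr_{R_\pp}(\omega_{R_\pp})=\tr(\omega_R)_\pp\subseteq\pp R_\pp$ is a proper ideal, so $\omega_{R_\pp}$ is not free and $R_\pp$ is not Gorenstein. Conversely, if $\tr(\omega_R)\not\subseteq\pp$, then $\tr(\omega_R)_\pp=R_\pp$, hence $\tr_{R_\pp}(\omega_{R_\pp})=R_\pp$ and $R_\pp$ is Gorenstein; in the graded setting the same argument applies verbatim, the localizations at primes being ordinary local rings. The main obstacle is the middle step, and specifically the implication $\tr_A(\omega_A)=A\Rightarrow\omega_A\cong A$: one must not assume $A$ is a domain (where a rank argument would suffice) but instead exploit $\Hom_A(\omega_A,\omega_A)\cong A$ together with the indecomposability of $A$. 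The localization compatibility of the trace also quietly requires the finite presentation of $\omega_R$.
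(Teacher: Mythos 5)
Your argument is correct: the localization compatibilities $\tr(\omega_R)_\pp=\tr_{R_\pp}((\omega_R)_\pp)$ and $(\omega_R)_\pp\cong\omega_{R_\pp}$, together with the local equivalence ``$\tr_A(\omega_A)=A\iff\omega_A\cong A\iff A$ Gorenstein'' (where your split-surjection plus $\Hom_A(\omega_A,\omega_A)\cong A$ step is sound), give exactly the statement. Note that the paper itself offers no proof --- it quotes this as Lemma 2.1 of \cite{HHSTraceofthecanonical} --- and your reasoning is essentially the standard argument given there, so there is nothing substantive to contrast.
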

When $I$ is an ideal of positive grade, its trace ideal is $\tr(I) = I \cdot I^{-1}$.  We will be studying $\tr(\omega_R)$ in the case that $R$ is a Cohen-Macaulay domain, so that $\omega_R$ is either isomorphic to $R$ (in the case that $R$ is Gorenstein), or can be identified with an ideal of grade 1.  Then we will use the fact that $\tr(\omega_R) = \omega_R \cdot \omega_R^{-1}$.

\medskip

Let $\omega_{R}$ be the canonical module of $R$. The {\bf $a$-invariant} of $R$ is defined as
\[
a(R)=-\alpha(\omega_{R}),
\]
where for a  finitely generated graded module $M$, we set $\alpha(M)=\min\{i\:\; M_i\neq 0\}$.

\smallskip

Let $R=R_1\sharp R_2\sharp \cdots \sharp R_m$ be the Segre product of standard graded Cohen-Macaulay toric $K$-algebras, each of dimension $\geq 2$.

\begin{Proposition}
\label{CM}
$R$ is Cohen-Macaulay  if $a(R_i)<0$ for $i=1,\ldots,m$. In this case,
\[
\omega_R= \omega_{R_1}\sharp \cdots \sharp \omega_{R_m}.
\]
\end{Proposition}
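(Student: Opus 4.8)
The plan is to reduce everything to the Künneth-type behavior of local cohomology under the Segre product. Recall that the Segre product $R = R_1 \sharp \cdots \sharp R_m$ sits inside the ordinary tensor product $T = R_1 \otimes_K \cdots \otimes_K R_m$ as the diagonal part with respect to the $\ZZ^m$-grading, i.e. $R = \bigoplus_{j \geq 0} (R_1)_j \otimes_K \cdots \otimes_K (R_m)_j$. First I would record the formula for the graded local cohomology of a Segre product in terms of the factors. For two factors one has the classical result (Goto--Watanabe) that in each internal degree $j$,
\[
H^k_{\mm}(R_1 \sharp R_2)_j \;=\; \bigoplus_{p+q=k} \big(H^p_{\mm_1}(R_1)_j \otimes_K H^q_{\mm_2}(R_2)_j\big) \;\oplus\; (\text{correction terms involving } H^0),
\]
but because each $R_i$ is a Cohen-Macaulay domain of dimension $\geq 2$, we have $H^0_{\mm_i}(R_i) = H^1_{\mm_i}(R_i) = 0$, so all the correction terms vanish and the formula is a clean tensor product. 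Iterating over $m$ factors, the top local cohomology $H^d_{\mm}(R)$ with $d = \dim R = \big(\sum_i \dim R_i\big) - (m-1)$ is nonzero precisely in the degrees where every $H^{d_i}_{\mm_i}(R_i)$ is nonzero ($d_i = \dim R_i$), and in those degrees it equals the tensor product $\bigotimes_i H^{d_i}_{\mm_i}(R_i)_j$; moreover $H^k_{\mm}(R) = 0$ for $k < d$, which is exactly the statement that $R$ is Cohen-Macaulay. This is where the hypothesis $a(R_i) < 0$ enters crucially: it is what guarantees there is an internal degree in which all the top local cohomologies are simultaneously nonzero so that $d$ really is achieved, and it is also needed to ensure the lower local cohomology modules vanish in all degrees rather than merely in nonnegative ones.

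Next I would translate the local cohomology statement into the statement about canonical modules via graded local duality. Over the polynomial ring (or over $R$ itself, using $\omega_R = \Ext$-duals), graded local duality gives $\omega_R = \bigoplus_j \Hom_K\big(H^d_{\mm}(R)_{-j}, K\big)$, and similarly $\omega_{R_i} = \bigoplus_j \Hom_K\big(H^{d_i}_{\mm_i}(R_i)_{-j}, K\big)$. Feeding in the tensor-product formula for $H^d_{\mm}(R)$ and using that $K$-linear duality turns a tensor product into a tensor product, I get, degree by degree,
\[
(\omega_R)_j \;=\; (\omega_{R_1})_j \otimes_K \cdots \otimes_K (\omega_{R_m})_j,
\]
which is precisely the definition of the Segre product module $\omega_{R_1} \sharp \cdots \sharp \omega_{R_m}$ (each $\omega_{R_i}$ being a graded $R_i$-module, its Segre product is the diagonal of the tensor product, carried along as an $R$-module). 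I should be careful that the $R$-module structure on the right-hand side — defined via the diagonal action — matches the one on $\omega_R$; this is automatic because the local duality isomorphism is functorial and the diagonal embedding $R \hookrightarrow T$ is a ring map, so the action is the restriction of the $T$-action on $\omega_{R_1} \otimes_K \cdots \otimes_K \omega_{R_m}$.

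For Cohen-Macaulayness itself (the first assertion), the cleanest route is again the local cohomology vanishing: $H^k_{\mm}(R)_j = \bigoplus_{p_1 + \cdots + p_m = k}\bigotimes_i H^{p_i}_{\mm_i}(R_i)_j$ for $k < d$, and since each $R_i$ is Cohen-Macaulay, $H^{p_i}_{\mm_i}(R_i) = 0$ unless $p_i = d_i$; but $\sum d_i = d + (m-1) > k$, forcing at least one $p_i < d_i$ in every term, hence $H^k_{\mm}(R) = 0$ for $k < d$, i.e. $\depth R = d = \dim R$. The main obstacle I anticipate is not conceptual but bookkeeping: getting the graded Künneth formula for Segre products stated with exactly the right correction terms (the $H^0$ and $H^1$ contributions that survive for low-dimensional or non-domain factors) and then verifying that all of them die under our hypotheses (dimension $\geq 2$, Cohen-Macaulay domain, $a$-invariant negative). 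Once that vanishing is in hand, both conclusions follow by a direct sum/tensor product comparison with no further work; I would likely cite Goto--Watanabe's paper on the Segre product (or the corresponding treatment in Bruns--Herzog) for the Künneth formula rather than reprove it.
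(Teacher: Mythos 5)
Your overall strategy (Goto--Watanabe's K\"unneth formula for local cohomology of a Segre product plus graded local duality) is essentially the same machinery the paper relies on -- the paper simply cites Goto--Watanabe (4.2.3)(ii) and (4.3.1) for two factors and inducts on $m$ -- but as written your argument has a genuine gap: the K\"unneth formula is misstated, and as a consequence the hypothesis $a(R_i)<0$ never actually does any work in your proof. The correct formula (for factors of positive depth) is
\[
H^k_{\mm}(R_1\sharp R_2)\;\iso\;\bigl(R_1\sharp H^k_{\mm_2}(R_2)\bigr)\;\dirsum\;\bigl(H^k_{\mm_1}(R_1)\sharp R_2\bigr)\;\dirsum\;\Dirsum_{i+j=k+1}H^i_{\mm_1}(R_1)\sharp H^j_{\mm_2}(R_2),
\]
with the shift $i+j=k+1$ rather than $i+j=k$, and the extra terms are $R_1\sharp H^k_{\mm_2}(R_2)$ and $H^k_{\mm_1}(R_1)\sharp R_2$, not ``correction terms involving $H^0$''. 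These mixed terms do not die because $H^0=H^1=0$: for $k=d_2<d=d_1+d_2-1$ the term $R_1\sharp H^{d_2}_{\mm_2}(R_2)=\Dirsum_{j\geq 0}(R_1)_j\tensor_K H^{d_2}_{\mm_2}(R_2)_j$ vanishes precisely because $H^{d_2}_{\mm_2}(R_2)$ is concentrated in degrees $\leq a(R_2)<0$ while a Segre product only sees nonnegative degrees; the same applies to $H^{d_1}_{\mm_1}(R_1)\sharp R_2$, and to identifying $H^d_{\mm}(R)$ with the pure term $H^{d_1}_{\mm_1}(R_1)\sharp H^{d_2}_{\mm_2}(R_2)$ before dualizing. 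This is the only place $a(R_i)<0$ enters. Your displayed vanishing argument ($\sum p_i=k<\sum d_i$ forces some $p_i<d_i$) uses only Cohen--Macaulayness of the factors, so if it were complete it would show that \emph{any} Segre product of CM domains of dimension $\geq 2$ is CM -- which is false: $K[x,y]\sharp R_2$ with $R_2$ the homogeneous coordinate ring of an elliptic curve (a two-dimensional CM domain with $a(R_2)=0$) is the classical non-CM example. Your convention is also internally inconsistent: with $\sum p_i=k$ the top nonvanishing cohomology would sit in degree $\sum_i d_i$, exceeding $\dim R=\sum_i d_i-(m-1)$.

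Once the formula is corrected the outline does go through: for $k<d$ the pure K\"unneth terms vanish because the factors are CM, the mixed terms vanish because $a(R_i)<0$ (and $\dim R_i\geq 2$ guarantees $d>d_i$, so no mixed term survives at $k=d$ either); hence $H^k_{\mm}(R)=0$ for $k\neq d$ and $H^d_{\mm}(R)\iso H^{d_1}_{\mm_1}(R_1)\sharp\cdots\sharp H^{d_m}_{\mm_m}(R_m)$, and graded duality gives $(\omega_R)_j=(\omega_{R_1})_j\tensor_K\cdots\tensor_K(\omega_{R_m})_j$ as you intend. Note also that iterating the two-factor formula over $m$ factors requires checking that the intermediate Segre products again satisfy the hypotheses; this is exactly the bookkeeping in the paper's induction, where one verifies $\dim S=\sum_i\dim R_i-(m-2)>2$ and $a(S)=-\alpha(\omega_S)<0$ using $\alpha(\omega_S)=\max_i\alpha(\omega_{R_i})$.
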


\begin{proof}
For $m=2$, this follows from \cite[Theorem (4.2.3)(ii) and Theorem (4.3.1)]{goto1978}. Now let $m>1$, and set $S=R_1\sharp R_2\sharp \cdots \sharp R_{m-1}$. Then $R=S\sharp R_m$. We induct on $m$ and so we assume $S$ is Cohen-Macaulay and $\omega_S=\omega_{R_1}\sharp \cdots \sharp \omega_{R_{m-1}}$. From \cite[Theorem (4.2.3)(i)]{goto1978} it follows by induction on $m$  that $\dim S= \sum_{i=1}^{m-1} \dim R_i-(m-2)>2$. We also see that
$$a(S)=-\alpha(\omega_S)= \max\{-\alpha(\omega_{R_i})\:\; i=1,\ldots,m-1\}$$ which implies that $a(S)<0$. Thus \cite[Theorem (4.2.3)(ii) and Theorem (4.3.1)]{goto1978} applied to $S\sharp R_m$ yields the desired conclusion.
\end{proof}

\begin{Example}{\em
Let $P=P_1+P_1+\cdots +P_m$ be a finite poset with connected components $P_i$. We denote by $K[P]$ the Hibi ring associated with $P$, see Definition~\ref{def}. Then $K[P]=K[P_1]\sharp\cdots \sharp K[P_m]$, $\dim K[P_i]=|P_i|+1$ and  $a(K[P_i])=-(\rank P_i+2)$.  Thus Proposition~\ref{CM} can be applied.}
\end{Example}
From now on, we will assume that all $R_i$ have negative $a$-invariant.

\smallskip

For  a standard graded Cohen-Macaulay $K$-algebra $R$ with the canonical module $\omega_R$, the graded module $\omega_R^{-1}=\Hom_R(\omega_R,R)$ is the anti-canonical module of $R$. The trace $\tr(\omega_R)$  of $\omega_R$ is the graded ideal  whose $k^{\rm th}$ graded component is generated by the elements $\varphi(g)$ with $\varphi\in (\omega_R^{-1})_i$, $g\in (\omega_R)_j$  and $i+j=k$. Thus
\[
\tr(\omega_R)_k=\sum_{i+j=k}(\omega_R^{-1})_i(\omega_R)_j.
\]

It follows from Proposition~\ref{CM} and  \cite[Theorem 2.6]{anti2017} that for the Segre product $R$ as above we have
\[
\omega_R^{-1}= \omega_{R_1}^{-1}\sharp \cdots \sharp \omega_{R_m}^{-1}.
\]
The action of $(\omega_R^{-1})_i$ on  $(\omega_R)_j$ is given by the action on the factors as follows:
\[
(\omega_R^{-1})_i(\omega_R)_j= (\omega_{R_1}^{-1})_i(\omega_{R_1})_j\tensor_K \cdots \tensor_K(\omega_{R_m}^{-1})_i(\omega_{R_m})_j.
\]
Then, for all $k$,
\begin{eqnarray}
\label{trace}
\tr(\omega_R)_k=\sum_{i+j=k}(\omega_{R_1}^{-1})_i(\omega_{R_1})_j\tensor_K \cdots \tensor_K(\omega_{R_m}^{-1})_i(\omega_{R_m})_j.
\end{eqnarray}

For a graded module $M$ and an integer $k$ we set $M_{\geq k}=\dirsum_{i\geq k}M_i$.
By using the above  description of the graded components of $\omega_R$ we obtain

\begin{Theorem}
\label{tracesegre}
$
\tr(\omega_R)_{\geq k}=\tr(\omega_{R_1})_{\geq k}\sharp \cdots \sharp \tr(\omega_{R_m})_{\geq k}
$
for $k\gg 0$.
\end{Theorem}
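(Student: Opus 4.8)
The plan is to compare the two sides of the claimed identity degree by degree, using the explicit formula \eqref{trace} for the graded components of $\tr(\omega_R)$. Recall that for a Segre product, the $k$-th graded component of $\tr(\omega_R)$ is
$$
\tr(\omega_R)_k = \sum_{i+j=k} (\omega_{R_1}^{-1})_i(\omega_{R_1})_j \tensor_K \cdots \tensor_K (\omega_{R_m}^{-1})_i(\omega_{R_m})_j,
$$
while the $k$-th component of the right-hand side $\tr(\omega_{R_1})_{\geq k}\sharp \cdots \sharp \tr(\omega_{R_m})_{\geq k}$ is $\tr(\omega_{R_1})_k \tensor_K \cdots \tensor_K \tr(\omega_{R_m})_k$, and by definition $\tr(\omega_{R_\ell})_k = \sum_{i+j=k}(\omega_{R_\ell}^{-1})_i(\omega_{R_\ell})_j$. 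So the inclusion $\tr(\omega_R)_k \subseteq \tr(\omega_{R_1})_k \tensor_K \cdots \tensor_K \tr(\omega_{R_m})_k$ is immediate for every $k$: each summand on the left is a tensor product of pieces, each of which sits inside the corresponding $\tr(\omega_{R_\ell})_k$. The content of the theorem is the reverse inclusion, and this is where the hypothesis $k \gg 0$ enters.

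For the reverse inclusion, I would fix $\ell$ and analyze the structure of $\tr(\omega_{R_\ell})_k$ for large $k$. Since $\tr(\omega_{R_\ell})$ is a graded ideal of the standard graded algebra $R_\ell$ and $R_\ell$ is generated in degree $1$, for $k$ large enough every element of $\tr(\omega_{R_\ell})_k$ is an $R_\ell$-linear combination of elements of $\tr(\omega_{R_\ell})_{k_0}$ for some fixed threshold $k_0$; more precisely, once $k$ exceeds the top degree of a finite generating set of $\tr(\omega_{R_\ell})$, we have $\tr(\omega_{R_\ell})_k = (R_\ell)_1 \cdot \tr(\omega_{R_\ell})_{k-1} = \cdots$, so that $\tr(\omega_{R_\ell})_k = (R_\ell)_{k-k_0}\cdot \tr(\omega_{R_\ell})_{k_0}$. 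Writing $\tr(\omega_{R_\ell})_{k_0} = \sum_{i+j=k_0}(\omega_{R_\ell}^{-1})_i(\omega_{R_\ell})_j$, and using that multiplying $(\omega_{R_\ell})_j$ by $(R_\ell)_{k-k_0} \subseteq (\omega_{R_\ell}^{-1})_0 \cdot \text{(something)}$... — more cleanly, $(R_\ell)_t \cdot (\omega_{R_\ell})_j \subseteq (\omega_{R_\ell})_{j+t}$, so $(R_\ell)_{k-k_0}\cdot(\omega_{R_\ell}^{-1})_i(\omega_{R_\ell})_j \subseteq (\omega_{R_\ell}^{-1})_i(\omega_{R_\ell})_{j+(k-k_0)}$, a single summand in $\tr(\omega_{R_\ell})_k$ with the \emph{same} first index $i$. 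The upshot: for $k \gg 0$, $\tr(\omega_{R_\ell})_k$ is spanned by the pieces $(\omega_{R_\ell}^{-1})_i(\omega_{R_\ell})_{k-i}$ with $i$ ranging only over the \emph{bounded} set $\{0,1,\dots,k_0\}$ (here using $a(R_\ell)<0$, i.e. $\alpha(\omega_{R_\ell}) > 0$, together with $\alpha(\omega_{R_\ell}^{-1}) \ge 0$, to keep the index range honest).

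With this uniform description in hand, choose $k$ large enough that the above reduction works simultaneously for all $\ell = 1,\dots,m$, with a common bound $i \in \{0,\dots,N\}$. Then a generator of $\tr(\omega_{R_1})_k \tensor_K \cdots \tensor_K \tr(\omega_{R_m})_k$ is a tensor $g_1 \tensor \cdots \tensor g_m$ with $g_\ell \in (\omega_{R_\ell}^{-1})_{i_\ell}(\omega_{R_\ell})_{k-i_\ell}$ and each $i_\ell \le N$. The obstruction is that the indices $i_\ell$ may differ across $\ell$, whereas the summands appearing in $\tr(\omega_R)_k$ via \eqref{trace} require a \emph{single} common index $i$ in all factors. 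The key step is to rewrite each $g_\ell$ so that all first indices agree: pick $i = \max_\ell i_\ell$, and for each $\ell$ multiply the anti-canonical factor by an element of $(R_\ell)_{i - i_\ell}$ while dividing the canonical factor correspondingly — i.e., use $(\omega_{R_\ell})_{k-i_\ell} = (R_\ell)_{i-i_\ell}\cdot(\omega_{R_\ell})_{k-i}$ for $k$ large (again the degree-$1$-generation of $\omega_{R_\ell}$ in high degrees), so that $g_\ell \in (\omega_{R_\ell}^{-1})_{i_\ell}\cdot(R_\ell)_{i-i_\ell}\cdot(\omega_{R_\ell})_{k-i} \subseteq (\omega_{R_\ell}^{-1})_i(\omega_{R_\ell})_{k-i}$. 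This realigns all factors to the common index $i$, exhibiting $g_1\tensor\cdots\tensor g_m$ inside the $i$-summand of \eqref{trace}, hence inside $\tr(\omega_R)_k$. Verifying that $\omega_{R_\ell}$ and $\tr(\omega_{R_\ell})$ are indeed generated in bounded degree (so that "for $k \gg 0$" is meaningful and uniform in $\ell$) is routine since both are finitely generated graded modules/ideals over $R_\ell$; the genuinely delicate point — and the one I would write out carefully — is the index-realignment argument of this last paragraph, including pinning down an explicit threshold for $k$ in terms of the $a$-invariants $a(R_\ell)$ and the top degrees of generators of the $\tr(\omega_{R_\ell})$.
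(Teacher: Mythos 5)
Your argument is correct and is essentially the paper's: the key mechanism in both is that for $k\gg 0$ the graded pieces of $\omega_{R_\ell}$ and $\omega_{R_\ell}^{-1}$ are reached from a bounded degree by multiplying with ring elements, which lets one realign the splitting index in every Segre factor to a common value (the paper fixes one universal pair $s=\max_\ell\beta(\omega_{R_\ell}^{-1})$, $t=k-s$ and shows each summand on both sides collapses into $(\omega_{R_\ell}^{-1})_s(\omega_{R_\ell})_t$, whereas you realign each spanning tensor to $i=\max_\ell i_\ell$; the two organizations are interchangeable). One small correction: the parenthetical claim $\alpha(\omega_{R_\ell}^{-1})\geq 0$ is false in general (for a Gorenstein factor $\omega_{R_\ell}^{-1}\cong R_\ell(-a_\ell)$ starts in degree $a_\ell<0$), but this is harmless since your argument only needs the upper bound on the index $i$, which follows from $\alpha(\omega_{R_\ell})>0$.
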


\begin{proof}
Let $R$ be a standard graded $K$-algebra with a graded maximal ideal $\mm$ and let $M$ be a finitely generated $R$-module.
We  set  $\beta(M)=\max\{i\:\; (M/\mm M)_i\neq 0\}$. Thus $\beta(M)$ is the highest degree of a generator in a minimal set of generators of $M$, and $M_k=M_l R_{k-l}$ for all $k\geq l\geq\beta(M)$.
Let
\[
b=\max_i\{\beta(\omega_{R_i}^{-1})\}+\max_i\{\beta(\omega_{R_i})\}.
\]
Let $k\geq b$,  $s=\max_i\{\beta(\omega_{R_i}^{-1})\}$ and $t= k-s$. Then $t\geq \max_i\{\beta(\omega_{R_i})\}$, since $k\geq b$. Now let $1\leq l\leq m$ be an integer and consider $(\omega_{R_l}^{-1})_i(\omega_{R_l})_j$ with $i+j=k$. If $i<s$, then $j>t$. Therefore, $(\omega_{R_l})_j=(\omega_{R_l})_tR_{j-t}$. Hence,
\[
(\omega_{R_l}^{-1})_i(\omega_{R_l})_j=((\omega_{R_l}^{-1})_iR_{j-t})(\omega_{R_l})_t\subset (\omega_{R_l}^{-1})_s(\omega_{R_l})_t.
\]
On the other hand, if $i\geq s$, then $(\omega_{R_l}^{-1})_i=(\omega_{R_l}^{-1})_sR_{i-s}$. Therefore,
\[
(\omega_{R_l}^{-1})_i(\omega_{R_l})_j=((\omega_{R_l}^{-1})_s(R_{i-s})(\omega_{R_l})_j)\subset (\omega_{R_l}^{-1})_s(\omega_{R_l})_t.
\]
Applying (\ref{trace}) we obtain
\[
\tr(\omega_R)_k=(\omega_{R_1}^{-1})_s(\omega_{R_1})_t\tensor_K \cdots \tensor_K(\omega_{R_m}^{-1})_s(\omega_{R_m})_t.
\]
By the choice of $s$ and $t$, $(\omega_{R_l}^{-1})_s(\omega_{R_l})_t=(\omega_{R_l}^{-1}\omega_{R_l})_k$ for all $l$. Thus,
$$\tr(\omega_R)_k= \tr(\omega_{R_1})_{k}\sharp \cdots \sharp \tr(\omega_{R_m})_{k}.$$ This yields  the desired conclusion.
\end{proof}

In particular, when each $R_i$ is Gorenstein, we get the following result, which is a slight generalization of Theorem 4.15 in \cite{HHSTraceofthecanonical}.

\begin{Theorem}\label{TraceForSegreProductOfGorensteinRings}
Let $R = R_1 \sharp \cdots \sharp R_m$ where $R_i$ is a Gorenstein standard graded $K$-algebra for each $i$, and we assume $a_i$ is the $a$-invariant of $R_i$, and we have $-a_1 \geq \dots \geq -a_m > 0$.  Then $\tr(\omega_R) = \m^{a_m - a_1}$.
\end{Theorem}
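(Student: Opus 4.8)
The plan is to reduce to the formula of Theorem~\ref{tracesegre} and then to identify $\tr(\omega_{R_i})$ for each Gorenstein factor. First I would recall that a Cohen-Macaulay graded $K$-algebra $R_i$ is Gorenstein precisely when $\omega_{R_i}\iso R_i(a_i)$, so that $\omega_{R_i}^{-1}\iso R_i(-a_i)$ and hence $\tr(\omega_{R_i}) = \omega_{R_i}\cdot\omega_{R_i}^{-1} \iso R_i$; in other words $\tr(\omega_{R_i}) = R_i$ as an ideal. That identification is the whole content on the level of factors, and it is elementary. The real work is to turn the graded-degree bookkeeping of Theorem~\ref{tracesegre} into an honest equality of ideals, keeping careful track of the internal degree shift introduced by the Segre product.

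Next I would run the argument of the proof of Theorem~\ref{tracesegre} once more in this special case, but without passing to a truncation, because in the Gorenstein case the modules involved are free (up to shift) and the computation is clean in every degree. Concretely, $(\omega_{R_i}^{-1})_p \neq 0$ exactly for $p \geq -a_i$, and $(\omega_{R_i})_q \neq 0$ exactly for $q \geq a_i$ (recall $a_i < 0$, so $-a_i > 0$, and $\alpha(\omega_{R_i}) = a_i$). Plugging into \eqref{trace}, the $k$-th graded component of $\tr(\omega_R)$ is the span of tensor products $\bigotimes_i (\omega_{R_i}^{-1})_p(\omega_{R_i})_q$ over $p+q=k$; since each factor is a one-dimensional free module appropriately shifted, $(\omega_{R_i}^{-1})_p(\omega_{R_i})_q = (R_i)_{p+q}$ whenever $p \geq -a_i$ and $q \geq a_i$, and vanishes otherwise. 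The optimal choice is $p$ as small as possible subject to all constraints $p \geq -a_i$, i.e.\ $p = \max_i(-a_i) = -a_1$, forcing $q = k + a_1$; for this to land in the support of every $(\omega_{R_i})_q$ we need $k + a_1 \geq a_i$ for all $i$, i.e.\ $k \geq a_1 - a_i$ for all $i$, and the binding constraint is $k \geq a_1 - a_m = a_m - a_1$ (using $-a_m \geq \dots$, wait: $-a_1 \geq -a_m$ gives $a_m \geq a_1$, so $a_m - a_1 \geq 0$). Hence $\tr(\omega_R)_k$ is all of $R_k$ for $k \geq a_m - a_1$ and is $0$ below that threshold, which says exactly $\tr(\omega_R) = \m^{a_m - a_1}$ as a graded ideal of the standard graded ring $R$.

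I expect the main obstacle to be purely bookkeeping: getting the sign conventions and the Segre internal grading right so that the lower bound comes out as $a_m - a_1$ rather than, say, $-a_m + a_1$ or an off-by-one variant, and making sure the case $k = a_m - a_1$ is included (it should be: there $p = -a_1$, $q = a_1 + a_m - a_1$... let me instead just check $q = k + a_1 = a_m \geq a_i$ which holds since $a_m$ is the largest). One subtlety worth spelling out is why $\tr(\omega_R)_k = 0$ for $k < a_m - a_1$: for any splitting $p + q = k$, if $p < -a_1$ then some factor $(\omega_{R_i}^{-1})_p$ vanishes, while if $p \geq -a_1$ then $q = k - p \leq k + a_1 < a_m$, so $(\omega_{R_m})_q = 0$; either way the tensor product is zero. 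Alternatively, since this is claimed to be a mild generalization of \cite[Theorem 4.15]{HHSTraceofthecanonical}, one could instead cite that result for $m = 2$ and induct, using $\omega_R^{-1} = \omega_S^{-1}\sharp\omega_{R_m}^{-1}$ together with Proposition~\ref{CM}; but the direct degree computation above is short enough that I would present it in full rather than invoke an induction.
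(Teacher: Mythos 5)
Your proposal is correct and follows essentially the same route as the paper: both identify $\omega_{R_i}\iso R_i(a_i)$, $\omega_{R_i}^{-1}\iso R_i(-a_i)$ and compute the graded components of $\tr(\omega_R)$ from the Segre decomposition of $\omega_R$ and $\omega_R^{-1}$, getting $R_k$ in degrees $k\geq a_m-a_1$ and $0$ below, the only difference being that you inline the degree computation in all degrees rather than citing Theorem~\ref{tracesegre} for $k\geq a_m-a_1$. One bookkeeping remark: with the paper's convention $a_i=-\alpha(\omega_{R_i})$ one has $(\omega_{R_i})_q\neq 0$ exactly for $q\geq -a_i$ and $(\omega_{R_i}^{-1})_p\neq 0$ exactly for $p\geq a_i$ (you state the two supports interchanged, and write $\alpha(\omega_{R_i})=a_i$), but since the constraint on $p+q$ is symmetric in the two factors this relabeling, like your intermediate sign slips, does not affect the threshold $a_m-a_1$ or the validity of the argument.
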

\begin{proof}
We have that $\omega_R = \omega_{R_1} \sharp \cdots \sharp \omega_{R_m}$ by Proposition \ref{CM}. Similarly, this gives $\omega_R^{-1} = \omega_{R_1}^{-1} \sharp \cdots \sharp \omega_{R_m}^{-1}$.
Since each $R_i$ is Gorenstein, we have $\omega_{R_i} \cong R_i(a_i)$ and $\omega_{R_i}^{-1} \cong R_i(-a_i)$, so that we can identify $\omega_R \cong R_1(a_1) \sharp \cdots \sharp R_m(a_m)$ and $\omega_R^{-1} \cong R_1(-a_1) \sharp \cdots \sharp R_m(-a_m)$.  In the notation of Theorem \ref{tracesegre}, we have that $\beta(\omega_R) = -a_1$ and $\beta(\omega_R^{-1}) = a_m$, since we have assumed $-a_1 \geq \dots \geq -a_m > 0$.  By Theorem \ref{tracesegre}, we have that if $k \geq a_m - a_1$, then
\[
\tr(\omega_R)_k= \tr(\omega_{R_1})_{k}\sharp \cdots \sharp \tr(\omega_{R_m})_{k}
\]
so that
\[
\tr(\omega_R)_{\geq (a_m - a_1)}= (R_1)_{\geq (a_m - a_1)}\sharp \cdots \sharp (R_m)_{\geq (a_m - a_1)}
\]
since $\tr(\omega_{R_i}) = R_i$ as each $R_i$ is Gorenstein.
On the other hand, if $k < a_m - a_1$, then for every $i + j = k$ we have either $i < -a_1$ or $j < a_m$ so that $(\omega_R^{-1})_i(\omega_R)_j = 0$ as it either contains the term $(\omega_{R_1})_i$ or the term $(\omega_{R_m}^{-1})_j$ and both are $0$.

Then  $\tr(\omega_R)= \tr(\omega_R)_{\geq (a_m - a_1)}$, and so we have $\tr(\omega_R) \cong \m^{a_m - a_1}$.
\end{proof}

In particular, we also recover the following result from \cite{HHSTraceofthecanonical}.

\begin{Corollary}
If $R = R_1 \sharp \cdots \sharp R_m$ where each $R_i$ is a Gorenstein standard graded $K$-algebra with the $a$-invariant $a_i$, then $R$ is nearly Gorenstein if and only if $|a_i - a_j| \leq 1$ for all $i$ and $j$. 
\end{Corollary}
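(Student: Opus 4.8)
The plan is to read this off directly from Theorem~\ref{TraceForSegreProductOfGorensteinRings}. Since permuting the factors $R_i$ leaves $R$ unchanged up to isomorphism, I would first reorder them so that $-a_1 \geq -a_2 \geq \cdots \geq -a_m$; moreover each $-a_i > 0$ by the standing assumption that every $R_i$ has negative $a$-invariant. Thus the hypotheses of Theorem~\ref{TraceForSegreProductOfGorensteinRings} are met, and it yields
\[
\tr(\omega_R) = \m^{d}, \qquad d := a_m - a_1 = \max_{i,j}\{\,a_i - a_j\,\} = \max_{i,j}\{\,|a_i - a_j|\,\} \geq 0,
\]
where the last two equalities use the chosen ordering of the $a_i$.

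Next I would translate the definition: $R$ is nearly Gorenstein precisely when $\m \subseteq \tr(\omega_R)$, equivalently when $\tr(\omega_R)$ equals $\m$ or equals $R$ (the latter being exactly the Gorenstein case). Since $R$ is standard graded of dimension $\geq 2$, we have $R_1 \neq 0$, and hence $\m^{e} \subsetneq \m^{e-1}$ for every $e \geq 1$; in particular $\m \subseteq \m^{d}$ holds if and only if $d \leq 1$. Combining this with the displayed formula, $R$ is nearly Gorenstein if and only if $d = \max_{i,j}|a_i - a_j| \leq 1$, which is exactly the asserted condition.

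There is essentially no obstacle, since the real content already sits in Theorem~\ref{TraceForSegreProductOfGorensteinRings}; the only points needing a moment's care are the harmless reordering of the factors, the remark that $d=0$ corresponds to $R$ being Gorenstein (and thus nearly Gorenstein, consistent with the ``$\leq 1$'' condition), and the elementary fact that $\m \neq \m^{2}$ in a positive-dimensional standard graded $K$-algebra, which is what rules out the case $d \geq 2$.
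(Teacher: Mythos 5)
Your proposal is correct and is exactly the derivation the paper intends: the corollary is stated without proof as an immediate consequence of Theorem~\ref{TraceForSegreProductOfGorensteinRings}, namely $\tr(\omega_R)=\m^{a_m-a_1}$ after reordering, combined with the characterization of nearly Gorenstein as $\m\subseteq\tr(\omega_R)$. Your extra remarks (the reordering, the $d=0$ Gorenstein case, and $\m\neq\m^2$ via graded Nakayama) only make explicit what the paper leaves implicit.
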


To relate the Gorenstein-ness properties of $R$ and $R_i$ we need the following lemma.

\begin{Lemma}
\label{height}
Let $I_j\subset R_j$ be graded ideals. Then
\[
\height_R(I_1\sharp I_2\sharp \cdots \sharp I_m)=
\begin{cases}
\dim R, \text{if $\height I_j=\dim R_j$ for $j=1,\ldots, m$},\\
\min\{\height I_j\:\; j=1,\ldots,m\}, \text{otherwise}.
\end{cases}
\]
\end{Lemma}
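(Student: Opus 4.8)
The plan is to reduce the height computation to a statement about prime ideals of the Segre product $R = R_1 \sharp \cdots \sharp R_m$, using the standard description of $\Spec(R)$ in terms of the $\Spec(R_i)$. First I would recall that $R$ is a subalgebra of $T := R_1 \tensor_K \cdots \tensor_K R_m$, and in fact $R$ is the $m$-th Veronese-type subalgebra picking out multidegrees $(d,d,\ldots,d)$; the inclusion $R \hookrightarrow T$ is module-finite, so heights of ideals (equivalently dimensions of quotients) are preserved when passing between $T$ and $R$ along the going-up/going-down machinery available for this inclusion. Concretely, $\height_R(I_1 \sharp \cdots \sharp I_m) = \dim R - \dim R/(I_1 \sharp \cdots \sharp I_m)$, and $R/(I_1 \sharp \cdots \sharp I_m)$ is itself (up to nilpotents / a finite extension) the Segre product $(R_1/I_1') \sharp \cdots \sharp (R_m/I_m')$, where $I_j' = \sqrt{I_j}$ — heights only see the radical, so I may assume each $I_j$ is radical, even prime after a further reduction to minimal primes. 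So the key reduces to computing $\dim$ of a Segre product of the domains $R_j/\pp_j$.

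The main computational input is the dimension formula for Segre products of standard graded $K$-algebras: for standard graded $K$-algebras $A_1,\ldots,A_m$, one has $\dim(A_1 \sharp \cdots \sharp A_m) = \dim A_1 + \cdots + \dim A_m - (m-1)$, \emph{provided} each $A_i$ has dimension $\geq 1$; this is exactly \cite[Theorem (4.2.3)(i)]{goto1978} already cited in the proof of Proposition~\ref{CM}, applied inductively. If some $A_i$ is Artinian (dimension $0$), then $A_i$ is a finite-dimensional $K$-vector space concentrated in a single degree after taking the reduction, and the Segre product becomes Artinian as well, so has dimension $0$. Now I would run the following case analysis. Set $d_j = \dim R_j$, $h_j = \height_R I_j$ (i.e. $h_j = d_j - \dim R_j/I_j$), so $\dim R_j/I_j = d_j - h_j$. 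Applying the dimension formula to $R/(I_1 \sharp \cdots \sharp I_m)$: if every $\dim R_j/I_j \geq 1$, i.e. every $h_j < d_j$, then
\[
\dim R/(I_1 \sharp \cdots \sharp I_m) = \sum_j (d_j - h_j) - (m-1) = \dim R - \sum_j h_j,
\]
using $\dim R = \sum_j d_j - (m-1)$ from Proposition~\ref{CM}'s proof. Hmm — but the claimed answer in that case is $\min_j h_j$, not $\sum_j h_j$. So this naive approach gives the wrong thing, which tells me the Segre product $I_1 \sharp \cdots \sharp I_m$ is \emph{not} the defining ideal of $(R_1/I_1)\sharp\cdots\sharp(R_m/I_m)$; rather $R/(I_1 \sharp \cdots \sharp I_m)$ retains the full bigraded pieces where at least one coordinate survives, so its dimension is governed by the \emph{largest} quotient. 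The correct picture: a monomial (pure tensor of homogeneous elements) lies in $I_1 \sharp \cdots \sharp I_m$ iff \emph{every} tensor factor lies in $I_j$; hence it lies outside iff \emph{some} factor is a nonzero element of $R_j/I_j$. This means $\Min(R/(I_1\sharp\cdots\sharp I_m))$ consists of the primes $\pp_1 \sharp R_2 \sharp \cdots \sharp R_m$, $R_1 \sharp \pp_2 \sharp \cdots \sharp R_m$, etc., for $\pp_j \in \Min(R_j/I_j)$ — primes in which all but one coordinate ideal is the whole ring. Thus
\[
\dim R/(I_1\sharp\cdots\sharp I_m) = \max_j \dim\bigl(R_1 \sharp \cdots \sharp (R_j/I_j) \sharp \cdots \sharp R_m\bigr) = \max_j\Bigl(\sum_{i\ne j} d_i + \dim R_j/I_j - (m-1)\Bigr) = \dim R - \min_j h_j,
\]
again as long as each $R_j/I_j$ has positive dimension so the formula applies; this yields $\height = \min_j h_j$, the second case of the Lemma.

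For the first case, suppose $\height I_j = \dim R_j$ for every $j$, i.e. each $I_j$ is $\mm_{R_j}$-primary, so $R_j/I_j$ is Artinian. Then every pure-tensor generator of $R$ of high enough degree lies in $I_1 \sharp \cdots \sharp I_m$ (pick degree exceeding the top degree of each $R_j/I_j$), so $I_1 \sharp \cdots \sharp I_m$ contains $\mm^N$ for $N \gg 0$, hence is $\mm$-primary and $\height_R = \dim R$, as claimed. The one subtlety to handle carefully is the "otherwise" clause: it covers the situation where \emph{some} $I_j$ is $\mm_{R_j}$-primary while others are not; in that case the corresponding term $R_1 \sharp \cdots \sharp (R_j/I_j) \sharp \cdots \sharp R_m$ has a zero-dimensional factor. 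I would argue that such a $j$ contributes a component of dimension strictly less than $\dim R - \min h_i$ is not automatic — rather I should note that if $R_j/I_j$ is Artinian the Segre product $R_1\sharp\cdots\sharp(R_j/I_j)\sharp\cdots\sharp R_m$ is Artinian too, contributing dimension $0$, and since some other $I_i$ is not $\mm_{R_i}$-primary the max is achieved at a $j$ with $\dim R_j/I_j \geq 1$, giving $\dim R - \min_i h_i$ where the min is over all $i$ (the Artinian factors have $h_i = d_i$ which is $\geq$ the others, so don't affect the min unless they tie, in which case they simply don't win the max). So the two cases really are as stated. The step I expect to be the main obstacle is making the identification of $\Min(R/(I_1\sharp\cdots\sharp I_m))$ and the dimension bookkeeping fully rigorous — in particular justifying the dimension formula for a Segre product when one factor is low-dimensional, and confirming that taking radicals/minimal primes commutes with forming Segre products in the way needed; everything else is routine once that combinatorial description of the non-zero bigraded pieces is in hand.
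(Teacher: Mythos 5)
Your route is essentially the paper's. The paper writes $I_1\sharp\cdots\sharp I_m$ as the intersection of the ``slices'' $R_1\sharp\cdots\sharp I_j\sharp\cdots\sharp R_m$ (your description of $V(I_1\sharp\cdots\sharp I_m)$ as the union of the corresponding components is the same decomposition), so that the height is the minimum of the slice heights, and then computes a slice by multiplying Hilbert polynomials: $\dim\bigl((R_1\sharp S)/(I_1\sharp S)\bigr)=\dim R_1/I_1+\dim S-1$ when $\dim R_1/I_1>0$, and the quotient vanishes in large degrees when $I_1$ is $\mm_{R_1}$-primary. That is exactly the Segre dimension formula you invoke from Goto--Watanabe, plus your Artinian case. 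Your detour through $\Min$ and primality of the slice ideals is unnecessary packaging (primality of a Segre product of quotients is not obvious over a non-closed field), but harmless since only the dimensions of the quotients enter; likewise your opening claim that $R\hookrightarrow R_1\otimes_K\cdots\otimes_K R_m$ is module-finite is false (already $K[xy]\subset K[x,y]$ is not), but you never use it.

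The step that fails is your final bookkeeping in the mixed case. The parenthetical ``the Artinian factors have $h_i=d_i$ which is $\geq$ the others'' is not true: nothing forces $\dim R_j\geq \height I_i$ for $i\neq j$. Concretely, take $R_1=K[x_1,x_2]$, $I_1=(x_1,x_2)$, $R_2=K[y_1,\dots,y_4]$, $I_2=(y_1,y_2,y_3)$. Since $(I_1)_k=(R_1)_k$ for $k\geq 1$, one has $I_1\sharp I_2=R_1\sharp I_2$, whose height is $\height I_2=3$ by your own max formula (or the paper's slice computation), whereas $\min_i\height I_i=2$. So what your argument, correctly carried out, actually proves in the ``otherwise'' case is $\height(I_1\sharp\cdots\sharp I_m)=\min\{\height I_j:\height I_j<\dim R_j\}$, not $\min_j\height I_j$; the two differ exactly when some $\mm_{R_j}$-primary $I_j$ has $\dim R_j$ strictly below all the non-primary heights. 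To be fair, this is an imprecision in the Lemma as stated: the paper's proof establishes the same corrected formula and makes the identical silent identification at the end, and nothing downstream is affected, since Theorem~\ref{tracesegre} and Corollary~\ref{punctured} only need the dichotomy ``height equals $\dim R$ if and only if all $\height I_j=\dim R_j$.'' But your writeup patches the discrepancy with a false inequality where it should instead flag (or correct) the statement.
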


\begin{proof}
Note that
\begin{eqnarray*}
I_1\sharp I_2\sharp \cdots \sharp I_m&=&(I_1\sharp R_2\sharp  R_3\sharp\cdots \sharp R_m)\sect (R_1\sharp I_2\sharp R_3\sharp\cdots \sharp R_m)\sect \ldots \\
&\sect & (R_1\sharp R_2\sharp \cdots \sharp  R_{m-1}\sharp I_m).
\end{eqnarray*}
This implies that
\[
\height_R(I_1\sharp I_2\sharp \cdots \sharp I_m) =\min\{\height_R(R_1\sharp\cdots \sharp I_j\sharp \cdots  \sharp R_m)\:\; j=1,\ldots,m\}.
\]
It therefore suffices to show that
\[
\height_R(R_1\sharp\cdots \sharp I_j\sharp \cdots  \sharp R_m)=
\begin{cases}
\dim R, \text{ if $\height I_j=\dim R_j$},\\
\height I_j, \text{ otherwise}.
\end{cases}
\]
We may assume $j=1$. We set $S=R_2\sharp \cdots \sharp R_m$. Then $R=R_1\sharp S$. First, suppose that $\height I_1=\dim R_1$.  Then $\dim_K(I_1\sharp S)_k= \dim_K(R_1\sharp S)_k =\dim_KR_k$ for $k\gg 0$. This shows that $\height(I_1\sharp S)=\dim R$.

Next we assume that $\height I_1<\dim R_1$. We denote by $P(M)$ the Hilbert polynomial of a finitely generated graded module. Then $P(M)\neq 0$, if $\dim M>0$ and $\dim M=\deg P(M)+1$.

We have
\begin{eqnarray*}
\dim_K( (R_1\sharp S)/(I_1\sharp S))_k &=&\dim_K(R_1\sharp S)_k- \dim_K(I_1\sharp S)_k\\
&=& (\dim_K(R_1)_k)(\dim_KS_k)- (\dim_K(I_1)_k)(\dim_KS_k)\\
&=& (\dim_K(R_1/I_1)_k)(\dim_KS_k).
\end{eqnarray*}
This shows that $P((R_1\sharp S)/(I_1\sharp S))= P(R_1/I_1)P(S)$. Our assumption implies that  $P(R_1/I_1)\neq 0$. Therefore,
\begin{eqnarray*}
\dim ((R_1\sharp S)/(I_1\sharp S))&=&\deg P(R_1/I_1)+ \deg  P(S) +1\\
&=& (\deg P(R_1/I_1)+1)+ (\deg P(S)+1)-1\\
&=& \dim R_1/I_1+\dim S-1.
\end{eqnarray*}
Similarly, $\dim(R_1\sharp S)=\dim R_1+ \dim S-1$. Thus,
\begin{eqnarray*}
\height(I_1\sharp S) &=&  \dim(R_1\sharp S)- \dim ((R_1\sharp S)/(I_1\sharp S))\\
&=& (\dim R_1+ \dim S-1)-(\dim R_1/I_1+\dim S-1)=\dim R_1-\dim R_1/I_1\\
&=& \height I_1.
\end{eqnarray*}

\end{proof}

\begin{Corollary}
\label{punctured}
$R$ is Gorenstein on the punctured spectrum if and only if this is the case for each $R_i$.
\end{Corollary}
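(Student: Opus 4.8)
The plan is to translate the geometric statement ``Gorenstein on the punctured spectrum'' into the algebraic condition that $\tr(\omega_R)$ is $\mm$-primary, i.e.\ $\height(\tr(\omega_R)) = \dim R$, and then to compute this height via the two results immediately preceding, namely Theorem~\ref{tracesegre} and Lemma~\ref{height}. Recall from Lemma~\cite[Lemma 2.1]{HHSTraceofthecanonical} (restated above) that $R_\pp$ fails to be Gorenstein exactly when $\tr(\omega_R)\subset \pp$, so $R$ is Gorenstein on the punctured spectrum if and only if $\tr(\omega_R)$ is primary to the graded maximal ideal $\mm$ of $R$, equivalently $\sqrt{\tr(\omega_R)} = \mm$, equivalently $\height(\tr(\omega_R)) = \dim R$. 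The same equivalence applies to each factor $R_i$ with its own graded maximal ideal $\mm_i$.

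First I would observe that passing to a truncation does not change the radical: for any graded ideal $J\subseteq R$ with $R$ standard graded and $\dim R \geq 1$, one has $\sqrt{J} = \sqrt{J_{\geq k}}$ for every $k$ (since $J_{\geq k} \supseteq \mm^{k} J$ contains a power of every element of $J$, and $J_{\geq k}\subseteq J$). Hence $\height(\tr(\omega_R)) = \height(\tr(\omega_R)_{\geq k})$ for all $k$. Now apply Theorem~\ref{tracesegre}: for $k \gg 0$ we have
\[
\tr(\omega_R)_{\geq k} = \tr(\omega_{R_1})_{\geq k}\sharp \cdots \sharp \tr(\omega_{R_m})_{\geq k}.
\]
Set $I_j := \tr(\omega_{R_j})_{\geq k}\subseteq R_j$. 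By the same truncation remark applied in each factor, $\height_{R_j}(I_j) = \height_{R_j}(\tr(\omega_{R_j}))$ for $k\gg 0$.

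Next I would feed this into Lemma~\ref{height}, which computes $\height_R(I_1\sharp\cdots\sharp I_m)$: it equals $\dim R$ if $\height_{R_j} I_j = \dim R_j$ for every $j$, and equals $\min_j\{\height_{R_j} I_j\}$ otherwise. Combining, $\height(\tr(\omega_R)) = \dim R$ holds if and only if $\height_{R_j}(\tr(\omega_{R_j})) = \dim R_j$ for every $j$; and in the remaining case $\height(\tr(\omega_R)) = \min_j\{\height_{R_j}(\tr(\omega_{R_j}))\} < \dim R_j$ for the minimizing $j$, hence $<\dim R$ outright but certainly not equal to $\dim R$. Translating back: $R$ is Gorenstein on the punctured spectrum $\iff \height(\tr(\omega_R)) = \dim R \iff \height_{R_j}(\tr(\omega_{R_j})) = \dim R_j$ for all $j \iff$ each $R_j$ is Gorenstein on the punctured spectrum. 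One small point to handle carefully: if some $R_j$ is actually Gorenstein, then $\tr(\omega_{R_j}) = R_j$ has height $\dim R_j$ and the ``if'' branch of Lemma~\ref{height} still applies verbatim, so the argument is uniform and needs no case split on Gorenstein-ness of the factors.

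The only genuinely delicate step is the interchange of ``truncate'' and ``take height/radical'', and making sure the $k\gg 0$ thresholds from Theorem~\ref{tracesegre} and from the truncation lemma in each factor can be chosen simultaneously — which they can, since there are finitely many factors and we simply take the maximum of the finitely many bounds. Everything else is a direct substitution into the two cited results, so I expect the proof to be short.
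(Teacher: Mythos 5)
Your proof is correct and follows essentially the same route as the paper: the paper's own argument is precisely the observation that $\height(\tr(\omega_{R_i})_{\geq k})=\height(\tr(\omega_{R_i}))$ combined with Theorem~\ref{tracesegre} and Lemma~\ref{height}. You merely spell out the translation to $\mm$-primariness and the truncation/height interchange in more detail, including the harmless Gorenstein-factor case, so there is nothing to change.
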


\begin{proof}
Note that $\height(\tr(\omega_{R_i})_{\geq k})= \height(\tr(\omega_{R_i}))$. Now, the assertion follows from Theorem~\ref{tracesegre} and Lemma~\ref{height}.
\end{proof}

\section{The case of Hibi rings}
In this section, we first briefly introduce Hibi rings and some related notation. 

\smallskip

If $P$ is a poset, we will write $u \lessdot v$ or $v$ \textbf{covers} $u$ for $u,v \in P$ if $u \leq v$ and there is no $w \in P$ such that $u \leq w \leq v$.
For $u \leq v$, we will denote by $[u, v]$ the set of all elements $w \in P$ such that $u \leq w \leq v$.  We say a chain $v_0 \lessdot \dots \lessdot v_n$ has \textbf{length} $n$.  For any subset $S \subset P$, let \textbf{rank} $S$ denote the maximal length of any chain in $S$, so that $\rank[u,v]$ denotes the maximal length of a chain from $u$ to $v$.
Similarly, for $u \leq v$, let \textbf{dist}$(u,v)$ be the minimal length of any chain from $u$ to $v$.
It will often be useful to add a minimal element $-\infty$ and a maximal element $\infty$ to a poset $P$.  We denote this by $\hat{P}$.

We say $I \subset P$ is a \textbf{poset ideal} if for all $v \in I$ and $u \leq v$ we have $u \in I$.  We denote the set of poset ideals of $P$ by $\mathcal{I}(P)$.

\begin{Definition}\cite{HDisplat}\label{def}
Given a finite poset $P:= \{v_1,\ldots,v_n\}$ and a field $K$, the \textit{Hibi ring} associated to $P$ over a field $K$, which we denote by $K[P] \subset K[t,x_1,\dots,x_n]$, is the ring generated over $K$ by the monomials $tx_I := t\prod_{v_i \in I} x_i$ for every $I \in \mathcal{I}(P)$,
$$
K[P] := K[tx_I | I \in \mathcal{I}(P)].
$$
\end{Definition}

To compute $\tr(\omega_R)$ for a Hibi ring $R$, we will use the following description of the canonical and anti-canonical modules (see also Proposition~\ref{descriptionoftau} and Corollary~\ref{descriptionoftautilde} as we can also view Hibi rings as normal affine semigroup rings).

\begin{Proposition}\label{GeneratorsForCanonical}\cite{SHilbfunofgradedalg}
Let $R = K[P]$, and let $\mb: \hat{P} \rightarrow \mathbb{Z}$, with $\mb(\infty) = 0$.  Let $\xxi =  t^{\mb(-\infty)}\prod_{v_i \in P} x_i^{\mb(v_i)}$.  Then $\xxi \in \omega_R$ if and only if $\mb$ satisfies the following:
\begin{align}
\mb(v_i) &\geq \mb(v_j) + 1 \text{ for } v_i \lessdot v_j \in \hat{P}.
\end{align}
\end{Proposition}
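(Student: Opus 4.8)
The statement to establish is the combinatorial description of $\omega_R$ for a Hibi ring $R = K[P]$: a Laurent monomial $\xxi = t^{\mb(-\infty)}\prod_{v_i \in P} x_i^{\mb(v_i)}$, where $\mb : \hat P \to \ZZ$ with $\mb(\infty) = 0$, lies in $\omega_R$ if and only if $\mb(v_i) \geq \mb(v_j) + 1$ whenever $v_i \lessdot v_j$ in $\hat P$. The natural route is to combine the Danilov--Stanley description of the canonical module of a normal affine semigroup ring with Hibi's identification of $K[P]$ as such a ring. First I would recall that $K[P]$ is the semigroup ring $K[S]$ attached to the cone $\sigma \subset \RR^{n+1}$ (coordinates indexed by $t$ and by $v_1,\dots,v_n$) cut out by the order relations: a point $\ab$, written as a function $\mb$ on $\hat P$ with $\mb(\infty) := 0$ playing the role of the $\infty$-coordinate normalization and $\mb(-\infty)$ the $t$-exponent, lies in $\sigma$ precisely when $\mb(u) \geq \mb(v)$ for every cover $u \lessdot v$ in $\hat P$ (equivalently for every relation $u \le v$ in $\hat P$). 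The facets of $\sigma$ correspond exactly to the covering relations $u \lessdot v$ of $\hat P$, with supporting linear functional $\ub_{u \lessdot v}(\mb) = \mb(u) - \mb(v)$.

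Granting that, the Danilov--Stanley theorem (quoted in the excerpt from \cite[Theorem 6.3.5]{BrunsHerzogBook}) says $\omega_R$ has $K$-basis the monomials $\xb^\ab$ with $\ab$ in the \emph{relative interior} of $\sigma$, and the refinement noted later in the introduction identifies this with the lattice points $\ab$ satisfying $\langle \ub_i, \ab\rangle \geq 1$ for every facet normal $\ub_i$. Applying this with the facet normals $\ub_{u\lessdot v}$ computed above gives exactly the condition $\mb(u) - \mb(v) \geq 1$, i.e. $\mb(u) \geq \mb(v) + 1$, for every covering pair $u \lessdot v$ of $\hat P$ — which is the claimed inequality. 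So the core of the proof is the two-step reduction: (i) identify $K[P]$ with $K[S_\sigma]$ and read off the facet normals from the Hasse diagram of $\hat P$; (ii) invoke Danilov--Stanley plus the inner-normal reformulation. I would also remark that the correspondence $I \in \mathcal I(P) \leftrightarrow$ its characteristic-vector monomial $tx_I$ translates poset ideals into the generators of $S$, and that the relation "$\mb$ monotone on covers $\iff$ $\mb$ monotone on all of $\hat P$" is immediate by chaining covers, so restricting to covers loses nothing.

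The main obstacle, and the only place that needs genuine care, is verifying that the facet normals of $\sigma$ are \emph{exactly} the $\ub_{u\lessdot v}$ for covering relations $u\lessdot v$ — that is, that each covering relation gives a genuine facet (not a lower-dimensional face) and that there are no other facets. This is where the combinatorics of distributive lattices enters: one checks that the cone has the expected dimension $n+1$, that each inequality $\mb(u) \ge \mb(v)$ for a cover is irredundant (one exhibits a monotone $\mb$ making that one inequality an equality while all others are strict, e.g. by perturbing the rank function of $\hat P$ along a maximal chain through the edge $u \lessdot v$), and that these suffice to define $\sigma$ since any order-preserving function on $\hat P$ is order-preserving on covers. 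Here one must also confirm the primitivity of the $\ub_{u\lessdot v}$ (their entries are $0,\pm 1$ with gcd $1$) so that the "$\geq 1$" normalization in the Danilov--Stanley reformulation is the correct one. Once the facet structure is pinned down, the rest is a direct substitution. Since the reference \cite{SHilbfunofgradedalg} already records this description, I would keep the facet-normal verification brief, citing Hibi's original construction for the identification of $K[P]$ as the semigroup ring of $\sigma$ and for the fact that the covering relations of $\hat P$ index its facets.
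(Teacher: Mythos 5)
Your argument is correct and takes essentially the route the paper itself points to: the paper states this proposition as a known result cited from Stanley, remarking that it can be recovered by viewing $K[P]$ as a normal affine semigroup ring (see Proposition~\ref{descriptionoftau}), which is exactly your reduction — identify $K[P]$ with the semigroup ring of the cone over the order polytope of $P$, note that its facets correspond to the cover relations of $\hat{P}$ with primitive $0,\pm 1$ normals, and apply the Danilov--Stanley description of $\omega_R$ via interior lattice points. Your added care about irredundancy of the cover inequalities and primitivity of the facet normals is precisely the detail that makes the ``$\geq 1$'' normalization legitimate, so no gaps remain.
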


Similarly, we can use this to compute a $K$-basis of $\omega_R^{-1}$, as follows.

\begin{Corollary}\label{GeneratorsForAnticanonical}
Let $R = K[P]$, and let $\mb: \hat{P} \rightarrow \mathbb{Z}$, with $\mb(\infty) = 0$.  As before, let $\xxi =  t^{\mb(-\infty)}\prod_{v_i \in P} x_i^{\mb(v_i)}$.  Then $\xxi \in \omega_R^{-1}$ if and only if $\mb$ satisfies the following:
\begin{align}
\mb(v_i) &\geq \mb(v_j) -1 \text{ for } v_i \lessdot v_j \in \hat{P}.
\end{align}
\end{Corollary}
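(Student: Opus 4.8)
The plan is to derive Corollary~\ref{GeneratorsForAnticanonical} directly from Proposition~\ref{GeneratorsForCanonical} by exploiting the fact that $\omega_R^{-1} = \Hom_R(\omega_R, R)$ and that $R = K[P]$ is a normal affine semigroup ring, so all of $R$, $\omega_R$, and $\omega_R^{-1}$ are $\ZZ^{n+1}$-graded submodules of the ambient Laurent polynomial ring (or rather its fraction field). In this setting, for a lattice-graded fractional ideal $M$ with $K$-basis indexed by a set of lattice points, the anti-canonical module $\omega_R^{-1}$ has $K$-basis indexed by exactly those lattice vectors $\mb$ with the property that $\mb + (\text{support of }\omega_R) \subseteq (\text{support of }R)$; that is, $\xxi \in \omega_R^{-1}$ if and only if $\xxi \cdot g \in R$ for every monomial $g \in \omega_R$. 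So the first step is to record this standard description of $\omega_R^{-1}$ as the "colon" or "quotient" $(R :_{\mathrm{Frac}} \omega_R)$ computed monomial-by-monomial, valid because $R$ is a normal (hence Krull) domain and $\omega_R$ is a divisorial fractional ideal.

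Next I would translate the membership condition into poset language. A monomial $\xb^{\ab} \in K[t,x_1,\dots,x_n]$ lies in $R = K[P]$ precisely when, writing $\ab$ via a function $\cb \colon \hat P \to \ZZ$ with $\cb(\infty) = 0$ in the normalization used in Proposition~\ref{GeneratorsForCanonical} (so that the $t$-exponent is $\cb(-\infty)$ and the $x_i$-exponent is $\cb(v_i)$), one has $\cb(v_i) \geq \cb(v_j) \geq 0$ whenever $v_i \lessdot v_j$ in $\hat P$ — this is the well-known chain description of membership in a Hibi ring, equivalently the statement that $R$'s semigroup is cut out by the inequalities $\cb(v_i) - \cb(v_j) \geq 0$ along cover relations. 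Given $\xxi = t^{\mb(-\infty)}\prod x_i^{\mb(v_i)}$ with $\mb$ as in the statement, and an arbitrary element $\xb^{\cb} \in \omega_R$ (so $\cb$ satisfies $\cb(v_i) \geq \cb(v_j) + 1$ along covers of $\hat P$, by Proposition~\ref{GeneratorsForCanonical}), the product corresponds to the function $\mb + \cb$, and I need: $\xxi \in \omega_R^{-1} \iff (\mb+\cb)(v_i) \geq (\mb+\cb)(v_j)$ for all covers $v_i \lessdot v_j$ in $\hat P$, for every admissible $\cb$.

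The heart of the argument is then the equivalence $(\star)$: the inequality $(\mb+\cb)(v_i) - (\mb+\cb)(v_j) \geq 0$ holds for all $\cb$ with $\cb(v_i)-\cb(v_j) \geq 1$ if and only if $\mb(v_i) - \mb(v_j) \geq -1$. The "if" direction is immediate: $(\mb+\cb)(v_i)-(\mb+\cb)(v_j) = (\mb(v_i)-\mb(v_j)) + (\cb(v_i)-\cb(v_j)) \geq -1 + 1 = 0$. For "only if", the point is that the cover differences $\cb(v_i)-\cb(v_j)$ can be made to equal exactly $1$ simultaneously for a single chosen cover while staying admissible: concretely, for a fixed cover $v_i \lessdot v_j$ in $\hat P$, I exhibit a function $\cb$ on $\hat P$ with $\cb(\infty)=0$, all cover-differences $\geq 1$, and $\cb(v_i)-\cb(v_j) = 1$ — for instance set $\cb(w) = \rank[w,\infty]$ for all $w \in \hat P$ (the maximal length of a chain up to $\infty$), which satisfies $\cb(w') \geq \cb(w'')+1$ for $w' \lessdot w''$, with equality along at least one maximal chain, and then perturb so that equality occurs at the specific cover of interest; or, more simply, note I only need SOME admissible $\cb$ with $\cb(v_i) - \cb(v_j)=1$, and the function $w \mapsto \operatorname{dist}(w, \infty)$ (minimal chain length to $\infty$ in $\hat P$) does the job since it strictly decreases by exactly $1$ along any cover relation. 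Feeding such a $\cb$ into $(\star)$ forces $\mb(v_i)-\mb(v_j) \geq -1$. Running this over all covers yields the stated system of inequalities, and conversely the "if" direction shows these inequalities suffice, completing the proof.

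The main obstacle, modest as it is, is the "only if" direction of $(\star)$: I must be careful that the test function $\cb$ I plug in is genuinely admissible for $\omega_R$ on all of $\hat P$ (in particular respecting the $\infty$ and $-\infty$ cover relations and the normalization $\cb(\infty)=0$) while realizing equality $\cb(v_i)-\cb(v_j)=1$ at the chosen cover — using $\cb(w) = \operatorname{dist}(w,\infty)$ handles this uniformly, since this function decreases by exactly $1$ along every cover of $\hat P$ and is an element of $\omega_R$ by Proposition~\ref{GeneratorsForCanonical}. A secondary point worth stating cleanly is the reduction in the first paragraph: that membership $\xxi \in \omega_R^{-1}$ is equivalent to $\xxi \cdot \omega_R \subseteq R$ (as a containment of fractional ideals in the ambient ring), which is what licenses the monomial-wise computation; this is where normality of $R$ is used.
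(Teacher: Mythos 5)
Your overall strategy---identifying $\omega_R^{-1}$ with the monomial colon ideal $(R:\omega_R)$, computing it monomial by monomial using the multigrading, and reducing the statement to an equivalence about cover differences---is sound, and it is in spirit the same mechanism the paper relies on (it views $K[P]$ as the semigroup ring of the order cone of $\hat{P}$, whose facet inequalities are exactly the cover relations, and appeals to Proposition~\ref{descriptionoftau} and Corollary~\ref{descriptionoftautilde}). The ``if'' direction of your equivalence $(\star)$ is fine. The genuine gap is in the ``only if'' direction, at exactly the point you identify as the heart of the argument: you need, for \emph{every} cover $v_i \lessdot v_j$ of $\hat{P}$, some monomial $\xb^{\cb}\in\omega_R$ with $\cb(v_i)-\cb(v_j)=1$ (only then does $\mb(v_i)\leq \mb(v_j)-2$ force $\xxi\cdot\xb^{\cb}\notin R$). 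The witness you propose does not do this job: the function $\cb(w)=\dist(w,\infty)$ is in general \emph{not} an element of $\omega_R$, because it need not drop by at least $1$ along every cover. For the poset $v_1<v_2<v_3<v_4$ together with $v_1<v_5$ with $v_5$ maximal, one has $\dist(v_1,\infty)=2$ while $\dist(v_2,\infty)=3$, so the inequality of Proposition~\ref{GeneratorsForCanonical} fails at the cover $v_1\lessdot v_2$; already in the paper's example after Corollary~\ref{inparticular} one gets $\dist(v_1,\infty)=\dist(v_2,\infty)=2$. In fact $\dist(\cdot,\infty)$ drops by exactly $1$ along every cover only when all saturated ascending chains from each element have the same length, i.e.\ essentially in the pure case---precisely the situation where the ``only if'' direction has no content, and exactly the wrong cases for the application in Theorem~\ref{thm:Janet}. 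Your fallback, ``take $\rank[\cdot,\infty]$ and perturb,'' names the missing lemma rather than proving it: $\rank[\cdot,\infty]$ does lie in $\omega_R$, but its drop across a prescribed cover can exceed $1$ (in the same example, $\rank[v_1,\infty]-\rank[v_5,\infty]=2$ at the cover $v_1\lessdot v_5$), and the needed perturbation is the nontrivial step.

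The lemma you need is true and can be proved, but it requires an argument. One route: because $v_i\lessdot v_j$ is a cover, there is no chain of length at least $2$ from $v_i$ to $v_j$, so the system of difference constraints $\cb(u)\geq \cb(v)+1$ for all covers $u\lessdot v$ of $\hat{P}$, together with $\cb(v_i)\leq \cb(v_j)+1$ and $\cb(\infty)=0$, has no negative cycle in its constraint graph and therefore admits an integral solution, which is the desired element of $\omega_R$ tight at the chosen cover. Alternatively, this is exactly the Hibi-cone instance of the Claim proved inside Corollary~\ref{descriptionoftautilde} (a lattice point of $\tau$ on which one prescribed facet inequality equals $1$), which you could invoke after observing that the cover inequalities are the facet inequalities of the cone defining $K[P]$. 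As written, however, this step of your proof is incorrect/unsubstantiated, so the ``only if'' direction is not established.
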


We showed in Theorem \ref{TraceForSegreProductOfGorensteinRings} that when $R = R_1 \sharp \cdots \sharp R_m$, there exists some $\ell$ such that $\tr(\omega_R) = \m^\ell$.  For Hibi rings, this characterizes rings in which $\tr(\omega_R) \supset \m^N$ for some specific integer $N$ associated to the underlined poset.  Namely, we have the following:
\begin{Theorem}\label{thm:Janet}
Let $R =K[P]$ be a Hibi ring, and $P = P_1 + \dots + P_m$ where the $P_i$ are the connected components of $P$, and let $N = \max \{\rank P_i - \rank P_j\}$ for all $i$ and $j$.  Then the following are equivalent:

\begin{enumerate}
    \item[{\em (1)}] $P_i$ is pure for all $i$ (i.e. $K[P_i]$ is Gorenstein for all $i$)
    \item[{\em (2)}] $\tr(\omega_R) = \m^N$
    \item[{\em (3)}] $\tr(\omega_R) \supset \m^\ell$ for some $\ell\geq 0$.
\end{enumerate}
\end{Theorem}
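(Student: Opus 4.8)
The plan is to prove the cycle of implications $(1)\Rightarrow(2)\Rightarrow(3)\Rightarrow(1)$, using the Segre decomposition $K[P]=K[P_1]\sharp\cdots\sharp K[P_m]$ together with the fact, due to Hibi, that $K[P_i]$ is Gorenstein precisely when $P_i$ is pure, and that in that case $a(K[P_i])=-(\rank P_i+2)$.

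The implication $(1)\Rightarrow(2)$ is essentially a direct application of Theorem~\ref{TraceForSegreProductOfGorensteinRings}. Assuming each $P_i$ is pure, each factor $R_i=K[P_i]$ is Gorenstein with $a$-invariant $a_i=-(\rank P_i+2)$. Reindexing so that $-a_1\geq\cdots\geq-a_m>0$, i.e. so that $\rank P_1\geq\cdots\geq\rank P_m$, the theorem gives $\tr(\omega_R)=\m^{a_m-a_1}$, and $a_m-a_1=(\rank P_1+2)-(\rank P_m+2)=\rank P_1-\rank P_m=\max\{\rank P_i-\rank P_j\}=N$. (One should note the base case where $P$ is connected, $m=1$: then $N=0$ and $\tr(\omega_R)=\m^0=R$, consistent with $R$ being Gorenstein.) The implication $(2)\Rightarrow(3)$ is trivial, taking $\ell=N$.

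The substantive direction is $(3)\Rightarrow(1)$, which I would prove by contraposition: if some $P_i$ is not pure, then $\tr(\omega_R)$ contains no power of $\m$. Since $\tr(\omega_R)_{\geq k}=\tr(\omega_{R_1})_{\geq k}\sharp\cdots\sharp\tr(\omega_{R_m})_{\geq k}$ for $k\gg0$ by Theorem~\ref{tracesegre}, and since $\m^\ell\subset\tr(\omega_R)$ would force $\m^k\subset\tr(\omega_R)$ for all $k\geq\ell$, it suffices to show that for a single non-pure connected poset $Q$, the ideal $\tr(\omega_{K[Q]})$ contains no $\m^k$; equivalently, using $\tr(\omega_R)=\omega_R\cdot\omega_R^{-1}$ and the combinatorial descriptions in Proposition~\ref{GeneratorsForCanonical} and Corollary~\ref{GeneratorsForAnticanonical}, that there is some monomial $\xb^{\cb}\in R=K[Q]$ of arbitrarily large degree which cannot be written as a product $\xb^{\ab}\xb^{\bb}$ with $\xb^{\ab}\in\omega_{K[Q]}$ and $\xb^{\bb}\in\omega_{K[Q]}^{-1}$. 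Concretely, $\xb^{\cb}$ lies in $\tr(\omega_R)$ iff there is a function $\mb:\hat Q\to\ZZ$ with $\mb(\infty)=0$, satisfying the anti-canonical inequalities $\mb(v)\geq\mb(w)-1$ along covers $v\lessdot w$, and such that the complementary exponent vector $\cb-\mb$ satisfies the canonical inequalities. Since $Q$ is not pure, $\hat Q$ has two maximal chains from $-\infty$ to $\infty$ of different lengths, say $r$ and $r'$ with $r<r'$; summing the canonical inequalities along the short chain and the anti-canonical inequalities along the long chain gives rigid constraints that, for a suitably chosen $\cb$ supported so as to "see" the discrepancy $r'-r$ along a non-saturated chain, become incompatible regardless of how large the total degree (the $t$-exponent) is.

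The main obstacle is making that incompatibility argument precise and uniform in $k$: one must exhibit an explicit family of monomials $\xb^{\cb}$ with $c$-degree tending to infinity that are provably excluded. I would do this by choosing $\cb$ to be (a large power of $t$ times) the indicator vector of a poset ideal $I$ whose boundary passes through the point where two chains of different lengths diverge, and then chase the two chain-sums: along the saturated (long) chain the anti-canonical constraint is tight enough that $\mb$ is forced up, while along the short chain the canonical constraint on $\cb-\mb$ is forced down, and the fixed "defect" between the two chains produces a contradiction in a single coordinate that no increase in the $t$-exponent can absorb. This pins down a specific prime $\pp\supsetneq\m^{\,k}$ (for every $k$) containing $\tr(\omega_R)$ — indeed one can read off that $\tr(\omega_R)$ sits inside the prime corresponding to the torus orbit where the short chain's variables vanish — completing the contrapositive and hence the theorem.
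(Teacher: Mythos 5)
Your implications $(1)\Rightarrow(2)$ and $(2)\Rightarrow(3)$ are correct and are exactly the paper's: each pure $P_i$ gives a Gorenstein factor with $a$-invariant $-(\rank P_i+2)$, and Theorem~\ref{TraceForSegreProductOfGorensteinRings} yields $\tr(\omega_R)=\m^{a_m-a_1}=\m^{N}$. The problem is $(3)\Rightarrow(1)$, which is the entire content of the theorem beyond Theorem~\ref{TraceForSegreProductOfGorensteinRings}: what you offer there is a plan, not a proof, and you say so yourself (``the main obstacle is making that incompatibility argument precise''). As described, the chain-summing scheme does not yet produce the exclusion: summing the canonical inequalities of Proposition~\ref{GeneratorsForCanonical} along a saturated chain from $-\infty$ to $\infty$ only bounds $\mb(-\infty)$, the $t$-exponent, from below by the chain length, and summing the anti-canonical inequalities of Corollary~\ref{GeneratorsForAnticanonical} along another chain only bounds $\mb'(-\infty)$ from below by minus its length; both constraints sit in the $t$-coordinate and are absorbed by letting the $t$-exponent of your candidate monomial $\xb^{\cb}$ grow, which is precisely the regime you must rule out. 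The claim that a ``fixed defect \dots in a single coordinate'' survives is exactly the point that needs proof, and neither the coordinate nor the excluded family $\xb^{\cb}$ is ever pinned down; the concluding remark about a torus-orbit prime is likewise asserted, not derived. So there is a genuine gap at the decisive step.

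For comparison, the paper closes this step with a much shorter pointwise estimate, with no chains and no reduction to a connected component. If some $P_i$ is not pure, there is a single element $v_j$ with $a=\rank([v_j,\infty])>b=\dist(v_j,\infty)$ in $\hat{P}$. Proposition~\ref{GeneratorsForCanonical} forces $\mb(v_j)\geq a$ for every monomial of $\omega_R$, Corollary~\ref{GeneratorsForAnticanonical} forces $\mb'(v_j)\geq -b$ for every monomial of $\omega_R^{-1}$, so every monomial of $\tr(\omega_R)=\omega_R\cdot\omega_R^{-1}$ is divisible by $x_j^{\,a-b}$ with $a-b>0$; hence $t^{\ell}\in\m^{\ell}$ never lies in $\tr(\omega_R)$, and no power of $\m$ is contained in the trace. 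Note also that your preliminary reduction via Theorem~\ref{tracesegre} to a single non-pure connected $Q$ is unnecessary (and, as written, silently uses that a tensor product of proper subspaces is proper). If you want to rescue your outline, replace the two-chain bookkeeping and the poset-ideal indicator monomials by this one-element estimate at $v_j$, with the witness monomial simply $t^{\ell}$.
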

\begin{proof}
We have $(1) \implies (2)$ from Theorem \ref{TraceForSegreProductOfGorensteinRings}, and clearly $(2) \implies (3)$, so it suffices to show $(3) \implies (1)$.  Suppose $P_i$ is not pure, so that there exists an element $v_j \in P_i$ such that $\rank([v_j, \infty]) \neq \dist(v_j,\infty)$ in $\hat{P_i}$.  Then note that by definition of $P$, this is also true in $\hat{P}$.  Let $a = \rank([v_j, \infty])$ and $b = \dist(v_j,\infty)$ in $\hat{P}$ (this is the same as if we were to define them in $\hat{P_i}$).  Then by Proposition \ref{GeneratorsForCanonical}, we have that for any $\xxi \in \omega_{R}$
\begin{equation*}
    \mb(v_j) \geq a
\end{equation*}
and for any $\xxip \in \omega_{R}^{-1}$ we have by Corollary \ref{GeneratorsForAnticanonical}
\begin{equation*}
    \mb'(v_j) \geq -b.
\end{equation*}
In particular if $\xxipp \in \tr(\omega_{R})$ we have that
\begin{equation*}
    \mb''(v_j) \geq a-b > 0
\end{equation*}
so that since $v_j \neq -\infty$ we know $t^\ell \notin \tr(\omega_{R})$ since a power of $x_j$ appears in every monomial in $\tr(\omega_{R})$.  Thus, we cannot have $\m^\ell \subset \tr(\omega_R)$ for any $\ell\geq 0$.
\end{proof}

In particular, we get the following.

\begin{Corollary}
\label{puncturedhibi}
Let $P$ be a finite poset with connected components $P_i$. Then $K[P]$ is Gorenstein on the punctured spectrum if and only if each $P_i$ is pure.
\end{Corollary}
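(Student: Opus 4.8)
The plan is to derive Corollary~\ref{puncturedhibi} from the machinery already assembled, treating the connected and disconnected cases separately and then gluing them with Corollary~\ref{punctured}.

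First I would reduce to the connected case. Write $P = P_1 + \dots + P_m$ as a disjoint union of connected components, so that $K[P] = K[P_1] \sharp \cdots \sharp K[P_m]$, as recalled in the Example following Proposition~\ref{CM}. Each $K[P_i]$ has negative $a$-invariant (namely $a(K[P_i]) = -(\rank P_i + 2) < 0$), so Proposition~\ref{CM} applies and the Segre-product hypotheses of Section~2 are satisfied. By Corollary~\ref{punctured}, $K[P]$ is Gorenstein on the punctured spectrum if and only if each $K[P_i]$ is. Hence it suffices to prove: for a \emph{connected} poset $Q$, $K[Q]$ is Gorenstein on the punctured spectrum if and only if $Q$ is pure.

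For the connected case I would argue as follows. If $Q$ is pure, then $K[Q]$ is Gorenstein by Hibi's classical criterion (\cite[Corollary 3.d]{HDisplat}), hence Gorenstein everywhere on $\Spec K[Q]$, in particular on the punctured spectrum. Conversely, suppose $Q$ is not pure. Apply Theorem~\ref{thm:Janet} with a single connected component: since $Q$ is not pure, condition~(1) fails, so condition~(3) fails, i.e.\ $\tr(\omega_{K[Q]}) \not\supset \m^\ell$ for every $\ell \geq 0$. In fact the proof of Theorem~\ref{thm:Janet} shows something sharper that I would quote directly: it exhibits a vertex $v_j \in Q$ with $\mb''(v_j) \geq a - b > 0$ for every monomial $\xxipp \in \tr(\omega_{K[Q]})$, where $a = \rank([v_j,\infty])$ and $b = \dist(v_j,\infty)$ in $\hat{Q}$. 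This means every monomial of $\tr(\omega_{K[Q]})$ is divisible by $x_j$, so $\tr(\omega_{K[Q]}) \subseteq (x_j) \cap K[Q]$, which is a prime ideal of $K[Q]$ (the monomial vanishing locus of a face) that is strictly contained in the graded maximal ideal $\m$ — indeed $t x_\emptyset$, or any $t x_I$ with $v_j \notin I$, lies in $\m$ but not in it. Therefore $\tr(\omega_{K[Q]})$ is not $\m$-primary, so by Lemma~[\cite{HHSTraceofthecanonical} Lemma 2.1] there is a prime $\pp \neq \m$ containing $\tr(\omega_{K[Q]})$, and $K[Q]_\pp$ is not Gorenstein; that is, $K[Q]$ is not Gorenstein on the punctured spectrum.

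The only mildly delicate point — and where I would be careful — is the claim that $\{x_j = 0\}$ really cuts out a proper prime of $K[Q]$ rather than the whole ring or a non-prime ideal: one should check that there exists a poset ideal $I$ of $Q$ with $v_j \notin I$ (e.g.\ $I = \emptyset$) so that $tx_I$ is a generator of $K[Q]$ outside $(x_j)$, and that the quotient by the monomials divisible by $x_j$ is again a (Hibi-type) affine semigroup domain. Alternatively, one can bypass prime ideals entirely and simply observe that $\tr(\omega_{K[Q]})$ omits the monomial $t^\ell$ for all $\ell$ (as in the proof of Theorem~\ref{thm:Janet}), hence cannot contain $\m^\ell$, hence is not $\m$-primary, and invoke the characterization of the punctured-spectrum condition in terms of $\m$-primality of $\tr(\omega)$ stated in the Introduction. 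This is the cleanest route and is essentially forced by Theorem~\ref{thm:Janet} together with Corollary~\ref{punctured}; no new computation is needed.
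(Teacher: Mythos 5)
Your proof is correct, and your closing ``cleanest route'' is exactly the paper's intended argument: the corollary is stated as an immediate consequence of Theorem~\ref{thm:Janet}, since by \cite[Lemma 2.1]{HHSTraceofthecanonical} Gorensteinness on the punctured spectrum is equivalent to $\tr(\omega_R)\supseteq \m^\ell$ for some $\ell\geq 0$, which is precisely condition (3) there. Your main route differs only in adding a reduction that the paper does not need: splitting $K[P]=K[P_1]\sharp\cdots\sharp K[P_m]$ and invoking Corollary~\ref{punctured} together with Hibi's Gorenstein criterion for the pure connected case. This is valid (the hypotheses of Section 2 are satisfied, as you check), but redundant, because Theorem~\ref{thm:Janet} is already stated for disconnected $P$ and its implication $(1)\Rightarrow(2)$ (via Theorem~\ref{TraceForSegreProductOfGorensteinRings}) gives $\tr(\omega_{K[P]})=\m^N\supseteq\m^N$ directly, with no appeal to \cite[Corollary 3.d]{HDisplat}. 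Your intermediate observation that $\tr(\omega_{K[Q]})$ lies in the prime $(x_j)\cap K[Q]$ is also fine --- the contraction of $(x_j)$ to the monomial subalgebra is prime since $K[Q]/((x_j)\cap K[Q])$ embeds in the domain $K[t,x_1,\ldots,x_n]/(x_j)$, and it is proper because $t\in K[Q]$ avoids it --- but, as you note yourself, it is not needed: the absence of every $t^\ell$ from $\tr(\omega_{K[Q]})$ already rules out $\m^\ell\subseteq\tr(\omega_{K[Q]})$.
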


\begin{Corollary}
\label{level}
If $K[P]$ is Gorenstein on the punctured spectrum, then $K[P]$ is level (i.e. all generators of $\omega_{K[P]}$ have the same degree).
\end{Corollary}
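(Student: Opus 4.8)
\textbf{Proof plan for Corollary~\ref{level}.}
The plan is to combine Corollary~\ref{puncturedhibi} with the structural description of Hibi rings as Segre products and a known criterion of Miyazaki for levelness. First I would observe that by Corollary~\ref{puncturedhibi}, the hypothesis that $K[P]$ is Gorenstein on the punctured spectrum is equivalent to each connected component $P_i$ being pure, i.e., each $K[P_i]$ being Gorenstein by Hibi's criterion. Since $K[P] = K[P_1]\sharp\cdots\sharp K[P_m]$, we are in the situation of Theorem~\ref{TraceForSegreProductOfGorensteinRings}: $K[P]$ is a Segre product of Gorenstein graded $K$-algebras. It remains to show that such a Segre product is level.

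The main content is therefore to invoke the result of Miyazaki on levelness (cited in the introduction as \cite{Miyazaki}): a Segre product of level $K$-algebras is level, and in particular a Segre product of Gorenstein algebras is level since Gorenstein standard graded algebras are trivially level (the canonical module is free of rank one, generated in a single degree $-a(R_i)$). Concretely, one checks that $\omega_{K[P]} = \omega_{K[P_1]}\sharp\cdots\sharp\omega_{K[P_m]}$ by Proposition~\ref{CM}, that each $\omega_{K[P_i]} \cong K[P_i](a_i)$ is generated in the single degree $-a_i$, and then that the Segre product of modules each generated in a single degree is again generated in a single degree. The latter is a direct computation with the graded pieces: $(\omega_{K[P]})_k = (\omega_{K[P_1]})_k \tensor_K \cdots \tensor_K (\omega_{K[P_m]})_k$, and minimal generators sit in degree $\max_i\{-a_i\}$, since once $k \ge \max_i\{-a_i\}$ every factor $(\omega_{K[P_i]})_k$ is obtained from $(\omega_{K[P_i]})_{-a_i}$ by multiplication by $(R_i)_{k+a_i}$, whence the whole tensor product is $(\omega_{K[P]})_{\max_i\{-a_i\}}\cdot R_{k-\max_i\{-a_i\}}$; below that degree there is nothing to kill. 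So all minimal generators of $\omega_{K[P]}$ lie in degree $\max_i\{-a_i\} = \max_i\{\rank P_i + 2\}$.

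I expect the only real subtlety to be bookkeeping about whether ``level'' should be phrased via the canonical module $\omega_R$ (all generators in one degree) or via the last free module in the minimal free resolution, and to make sure the Segre product operation genuinely preserves generation in a single degree — which, as sketched above, reduces to the elementary observation that $\beta(\omega_{R_i}) = -a_i$ for each $i$ forces $\beta(\omega_R) = \max_i\{-a_i\}$ and that no generators appear below this degree because each $(\omega_{R_i})_k$ vanishes for $k < -a_i$. Since each $R_i$ is Gorenstein this is immediate, so the corollary follows at once; alternatively one can simply cite Miyazaki's theorem \cite{Miyazaki} that Segre products of level rings are level, together with Corollary~\ref{puncturedhibi} and Hibi's Gorenstein criterion.
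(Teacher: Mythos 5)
Your proof is correct, but it takes a genuinely different route from the paper. The paper's proof is two lines: after Corollary~\ref{puncturedhibi} reduces the hypothesis to purity of every connected component, it invokes Miyazaki's combinatorial levelness criterion for Hibi rings (\cite[Theorem~3.3]{Miyazaki}): $K[P]$ is level provided that for every $x\in P$ all chains in $\hat{P}$ ascending from $x$ have the same length, which is immediate when each $P_i$ is pure. You instead argue algebraically through the Segre decomposition: each $K[P_i]$ is Gorenstein by Hibi's criterion, $\omega_{K[P]}=\omega_{K[P_1]}\sharp\cdots\sharp\omega_{K[P_m]}$ by Proposition~\ref{CM}, each factor is $K[P_i](a_i)$, and the graded pieces $(\omega_{K[P]})_k=(R_1)_{k+a_1}\tensor_K\cdots\tensor_K(R_m)_{k+a_m}$ vanish for $k<k_0:=\max_i\{-a_i\}$ and satisfy $(\omega_{K[P]})_k=R_{k-k_0}(\omega_{K[P]})_{k_0}$ for $k\geq k_0$ because each $R_i$ is standard graded and $k_0+a_i\geq 0$; hence $\omega_{K[P]}$ is generated in the single degree $\max_i\{\rank P_i+2\}$ and $K[P]$ is level in the sense stated. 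This is sound and self-contained (it is essentially the same bookkeeping as in Theorems~\ref{tracesegre} and~\ref{TraceForSegreProductOfGorensteinRings}), and it even yields the exact generation degree, which the paper's argument does not record; the paper's route buys brevity and works directly from the poset. One caution: your fallback citation is misattributed. The Miyazaki result cited in the paper is the chain-length criterion for Hibi rings, not a theorem asserting that Segre products of level (or Gorenstein) algebras are level, and you should not lean on such a general statement without hypotheses (Cohen--Macaulayness of the Segre product via negative $a$-invariants, etc.); since your direct computation does not need it, simply drop that alternative and keep the explicit grading argument.
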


\begin{proof}
In \cite[Theorem~3.3]{Miyazaki} Miyazaki showed that $K[P]$ is level if for all $x\in P$ all chains in $\hat{P}$ ascending from $x$ have the same length. This is obviously the case if  all connected components of $P$ are pure.
\end{proof}

In the case that $R$ cannot be written as a Segre product of smaller Hibi rings (i.e. $P$ is connected), we also obtain the following result.

\begin{Corollary}
\label{inparticular}
If $P$ is connected then $K[P]$ is Gorenstein if and only if it is Gorenstein on the punctured spectrum.
\end{Corollary}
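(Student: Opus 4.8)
The plan is to derive Corollary~\ref{inparticular} directly from Corollary~\ref{puncturedhibi} (equivalently, from the equivalence $(1)\iff(3)$ of Theorem~\ref{thm:Janet}) together with Hibi's classical criterion that $K[P]$ is Gorenstein if and only if $P$ is pure. The point is that connectedness collapses the gap between the two notions: if $P$ is connected, then $P$ has only one connected component, namely $P$ itself.

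First I would recall the trivial implication: if $K[P]$ is Gorenstein, then it is Gorenstein on the punctured spectrum, since $\tr(\omega_R)=R$ in that case, so $\tr(\omega_R)$ is not contained in any prime. This direction needs no hypothesis on $P$. For the converse, suppose $K[P]$ is Gorenstein on the punctured spectrum. By Corollary~\ref{puncturedhibi}, this is equivalent to every connected component $P_i$ of $P$ being pure. Since $P$ is connected, the only connected component is $P$ itself, so $P$ is pure. By Hibi's theorem (\cite[Corollary 3.d]{HDisplat}), $K[P]$ is then Gorenstein. Combining the two directions gives the equivalence.

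Alternatively, one can phrase it via Theorem~\ref{thm:Janet}: when $P$ is connected, $m=1$, so $N=\max\{\rank P_i-\rank P_j\}=0$, and condition $(2)$ reads $\tr(\omega_R)=\m^0=R$, i.e.\ $R$ is Gorenstein; meanwhile $(1)$ says $P$ is pure, i.e.\ $R$ is Gorenstein by Hibi's criterion; and $(3)$, $\tr(\omega_R)\supset\m^\ell$ for some $\ell$, is precisely the statement that $R$ is Gorenstein on the punctured spectrum, because $\tr(\omega_R)\supset\m^\ell$ forces $\tr(\omega_R)$ to have the same radical as $\m$ (it is $\m$-primary), hence to be contained in no prime other than $\m$. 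So the chain $(1)\iff(2)\iff(3)$ specializes exactly to ``Gorenstein $\iff$ Gorenstein on the punctured spectrum.''

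There is no real obstacle here; the only mild subtlety is making explicit that ``$\tr(\omega_R)\supset\m^\ell$ for some $\ell\ge 0$'' is the correct algebraic rendering of ``Gorenstein on the punctured spectrum,'' which follows from Lemma~2.1 of \cite{HHSTraceofthecanonical} (the non-Gorenstein locus is $V(\tr(\omega_R))$) together with the observation that a graded ideal containing a power of $\m$ is $\m$-primary. I would state the proof in its shortest form:

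\begin{proof}
If $K[P]$ is Gorenstein, then it is trivially Gorenstein on the punctured spectrum. Conversely, suppose $K[P]$ is Gorenstein on the punctured spectrum. By Corollary~\ref{puncturedhibi}, every connected component of $P$ is pure; since $P$ is connected, $P$ itself is pure, and hence $K[P]$ is Gorenstein by \cite[Corollary 3.d]{HDisplat}.
\end{proof}
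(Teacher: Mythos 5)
Your proof is correct and follows essentially the same route as the paper, which also deduces the corollary from Theorem~\ref{thm:Janet} (via Corollary~\ref{puncturedhibi} and Lemma~2.1 of \cite{HHSTraceofthecanonical}) together with Hibi's purity criterion for Gorensteinness. Your explicit remark that ``$\tr(\omega_R)\supset\m^\ell$ for some $\ell$'' is the algebraic formulation of being Gorenstein on the punctured spectrum is exactly the point the paper relies on.
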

\begin{comment}
\begin{proof}
This follows directly from Theorem \ref{TraceForSegreProductOfGorensteinRings}, as $R$ is Gorenstein on the punctured spectrum if and only if $\tr(\omega_R) \supset \m^\ell$ for some $\ell\geq 0$ by Lemma 2.1 in \cite{HHSTraceofthecanonical}.
\end{proof}
\end{comment}

In the example below, we can see that if $P$ is connected but not pure, then $K[P]$ is not Gorenstein on the punctured spectrum, i.e., for each $\ell>0$ we have that $\tr(\omega_R)  \not\supset \m^\ell$.

\begin{Example}{\em
Consider the following poset $P$ and its corresponding Hibi ring $K[P]$.
\begin{center}
\begin{tikzpicture}
\draw (1,-1) --(0,-2) -- (0,0);
\draw[fill] (0,0) circle [radius=0.1];
\draw[fill] (0,-1) circle [radius=0.1];
\draw[fill] (0,-2) circle [radius=0.1];
\draw[fill] (1,-1) circle [radius=0.1];

\node [right] at (0,0) {$v_3$};
\node [right] at (0,-1) {$v_2$};
\node [right] at (0,-2) {$v_1$};
\node [right] at (1,-1) {$v_4$};

\end{tikzpicture}
\end{center}
Then $\tr(\omega_{K[P]}) = (tx_1, tx_1x_4, tx_1x_2, tx_1x_2x_4, tx_1x_2x_3, tx_1x_2x_3x_4)$. Note that no power of $t$ belongs to $\tr(\omega_{K[P]})$. Moreover, $ \dim K[P]=5$ and $\height(\tr(\omega_{K[P]})) = 4 = \dim K[P] - 1$.}
\end{Example}

In the following we construct families of connected posets for which the dimension of the non-Gorenstein locus of the corresponding Hibi ring is as big as we want.

\smallskip

Given two (finite) posets $P_1$ and $P_2$ on disjoint sets, the ordinal sum of $P_1$ and $P_2$, denoted by $P_1\oplus P_2$, is defined to be the poset $P$ on the set $P_1 + P_2$ with order relation given as follows: if $p,q\in P_i$, and $p\leq q$ in $P_i$ for $i = 1$ or $2$, then $p\leq q$ in $P$, and if $p\in P_1$ and $q\in P_2$, then $p\leq q$. Note that in general $P_1\oplus P_2\not \iso P_2\oplus P_1$.

For a poset $Q$ we denote by $\bar{Q}$ the poset which is obtained from $Q$ by adding a minimal element to $Q$. The following result is observed in \cite[Page 434]{lattices}.

\begin{Proposition}
\label{ordinal}
Let $P_1$ and $P_2$ be posets on disjoint sets. Then $$K[P_1\oplus \bar{P_2}]=K[P_1]\tensor_K K[P_2].$$
\end{Proposition}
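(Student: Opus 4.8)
The plan is to identify both sides as subalgebras of a polynomial ring and show their monomial $K$-bases coincide. First I would unwind the definitions. The poset ideals of $P_1 \oplus \bar P_2$ are exactly the sets $I \cup J$ where $I \in \mathcal{I}(P_1)$ and $J \in \mathcal{I}(\bar P_2)$, subject to the constraint that $I \cup J$ is downward closed: since everything in $P_1$ lies below everything in $\bar P_2$, the set $I \cup J$ is a poset ideal if and only if either $J = \emptyset$, or $J$ contains the adjoined minimal element of $\bar P_2$ and $I = P_1$. Writing $J = \{*\} \cup J'$ with $J' \in \mathcal{I}(P_2)$ in the second case, the poset ideals of $P_1 \oplus \bar P_2$ are precisely: (a) $I$ for $I \in \mathcal{I}(P_1)$ (these are the ones with $J=\emptyset$), and (b) $P_1 \cup \{*\} \cup J'$ for $J' \in \mathcal{I}(P_2)$. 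This is the standard description of $\mathcal{I}(P_1 \oplus \bar P_2)$ recorded in \cite[Page~434]{lattices}.

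Next I would set up the two rings on a common ground ring. Label the vertices of $P_1$ by variables $x_1,\dots,x_n$, those of $P_2$ by $y_1,\dots,y_p$, and let $z$ be the variable for the adjoined minimum $*$ of $\bar P_2$. Then $K[P_1 \oplus \bar P_2]$ sits inside $K[t, x_1,\dots,x_n, z, y_1,\dots,y_p]$, generated by the monomials $t x_I$ for $I \in \mathcal{I}(P_1)$ together with the monomials $t (\prod_{i} x_i) z \, x_{J'}$ for $J' \in \mathcal{I}(P_2)$, where $x_{J'} = \prod_{y_j \in J'} y_j$. On the other side, $K[P_1] \tensor_K K[P_2]$ is generated by $t x_I \tensor 1$ and $1 \tensor s x_{J'}$, where I write $s$ for the degree variable of $K[P_2]$; it embeds into $K[t,x_1,\dots,x_n] \tensor_K K[s,y_1,\dots,y_p]$.

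The key step is to produce the isomorphism. Define a $K$-algebra map on the ambient rings by $t \mapsto t \tensor 1$, $x_i \mapsto x_i \tensor 1$, $y_j \mapsto 1 \tensor y_j$, and crucially $z \mapsto (t^{-1}\prod_i x_i^{-1}) \tensor s$ — that is, $z$ maps to the element that converts the generator $t(\prod_i x_i) z\, x_{J'}$ into $1 \tensor s x_{J'}$. To make this honest I would instead argue directly on generators: send $t x_I \mapsto t x_I \tensor 1$ and $t(\prod_i x_i) z\, x_{J'} \mapsto 1 \tensor s x_{J'}$, and check this extends to a ring isomorphism. The cleanest route is to note that $K[P_1\oplus\bar P_2]$ is a toric ring whose defining lattice is visibly the direct sum of the lattices defining $K[P_1]$ and $K[P_2]$: the exponent vectors of the type-(a) generators only involve $t, x_i$ and match those of $K[P_1]$, while the exponent vectors of the type-(b) generators, after subtracting the common "base point" $t\prod_i x_i$ (which corresponds to the generator $1 \in \mathcal I(P_1)$, i.e. $t x_\emptyset$... careful: the bottom poset ideal giving the unit), differ only in the $z, y_j$ coordinates and match those of $K[P_2]$. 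Since the semigroup generated by a disjoint union of exponent-vector sets living in complementary coordinate blocks (sharing only the single "multiplicative identity" direction $t \mapsto t\tensor 1$, $t \prod x_i \mapsto$ the identity of the second factor) is the product semigroup, the two semigroup rings are isomorphic, hence so are $K[P_1\oplus\bar P_2]$ and $K[P_1]\tensor_K K[P_2]$.

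The main obstacle is bookkeeping the "shared" element correctly: both $K[P_1]$ and $K[P_2]$ have a unit (coming from the empty poset ideal in $P_1$, and — via the generator $P_1 \cup \{*\}$, i.e. $J' = \emptyset$ — from the empty poset ideal of $P_2$), and in the tensor product these two units are identified, so one must check that in $K[P_1 \oplus \bar P_2]$ the generator $t\prod_i x_i$ (type (a), $I = P_1$) and the generator $t (\prod_i x_i) z$ (type (b), $J' = \emptyset$) play the roles of $1\tensor 1$'s two "halves" compatibly — equivalently, that the map is well-defined on the single relation binding these. This is a routine check once the lattice picture is drawn, so the heart of the proof is really just setting up the exponent-vector dictionary and invoking that the toric ring of a direct sum of affine semigroups is the tensor product of the toric rings of the summands.
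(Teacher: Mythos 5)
The paper itself gives no proof of Proposition~\ref{ordinal}; it is simply quoted from \cite[Page 434]{lattices}, so the only thing to assess is your argument on its own terms. Your route --- view both sides as affine semigroup rings and match exponent vectors --- is the natural one and does work, and your description of $\mathcal{I}(P_1\oplus\bar{P_2})$ (the ideals $I\in\mathcal{I}(P_1)$, together with $P_1\cup\{*\}\cup J'$ for $J'\in\mathcal{I}(P_2)$) is correct.

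Two points need tightening. The step you lean on --- ``after subtracting the common base point $t\prod_i x_i$, the type-(b) exponent vectors live in a complementary block, and a semigroup generated by vectors in complementary blocks is a product'' --- is not a legitimate semigroup operation as stated: subtracting a fixed vector from \emph{some} of the generators is not a homomorphism, and the semigroup generated by $\{g_I\}\cup\{g_{P_1}+e_{J'}\}$ is in general not the product of the semigroups generated by $\{g_I\}$ and $\{e_{J'}\}$ (it does not even contain the $e_{J'}$). What rescues the argument is that here the subtraction is realized by a \emph{linear}, unimodular automorphism of the ambient lattice, because the $z$-coordinate is exactly the indicator of the type-(b) generators: the shear $(t,\mathbf{x},z,\mathbf{y})\mapsto(t-z,\,\mathbf{x}-z\mathbf{1},\,z,\,\mathbf{y})$ fixes every type-(a) vector $(1,\epsilon_I,0,\mathbf{0})$ and sends every type-(b) vector $(1,\mathbf{1},1,\epsilon_{J'})$ to $(0,\mathbf{0},1,\epsilon_{J'})$, hence carries the generating set of $K[P_1\oplus\bar{P_2}]$ bijectively onto that of $K[P_1]\tensor_K K[P_2]$ and induces the desired isomorphism of semigroup rings. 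Equivalently, the substitution you dismissed as ``not honest,'' namely $t\mapsto t\tensor 1$, $x_i\mapsto x_i\tensor 1$, $y_j\mapsto 1\tensor y_j$, $z\mapsto t^{-1}(\prod_i x_i)^{-1}\tensor s$, is perfectly rigorous as an isomorphism of the ambient Laurent polynomial rings (it is precisely this unimodular change of coordinates), and restricting it to the Hibi ring is the cleanest way to finish; by contrast, ``send generators to generators and check it extends'' hides exactly the content that needs proving. Finally, your closing worry about the two units being identified is a red herring: the empty poset ideals of $P_1$ and $P_2$ correspond to the degree-one generators $t$ and $s$, not to $1$, and under the map above $tx_{P_1}\mapsto tx_{P_1}\tensor 1$ while $tx_{P_1}z\mapsto 1\tensor s$, so there is no extra relation to check beyond what the lattice isomorphism already guarantees.
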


\begin{Corollary}
\label{traceordinal}
Let $P$ be the ordinal sum of $P_1$ and $\bar{P_2}$. Then $$\tr(\omega_{K[P]})=(\tr(\omega_{K[P_1]})K[P])\cdot(\tr(\omega_{K[P_2]})K[P])$$
\end{Corollary}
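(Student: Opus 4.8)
The plan is to combine Proposition~\ref{ordinal} with the general fact that the trace of the canonical module behaves well under tensor products of $K$-algebras. Since $P = P_1 \oplus \bar{P_2}$, Proposition~\ref{ordinal} gives $K[P] = K[P_1] \tensor_K K[P_2]$. Writing $A = K[P_1]$, $B = K[P_2]$ and $R = A \tensor_K B$, the statement to prove reduces to the identity
\[
\tr_R(\omega_R) = (\tr_A(\omega_A) R)\cdot(\tr_B(\omega_B) R),
\]
together with the observation $\omega_R \cong \omega_A \tensor_K \omega_B$, which follows from the Künneth formula for local cohomology (or directly, since $A$ and $B$ are Cohen--Macaulay graded $K$-algebras).

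First I would record $\omega_R \cong \omega_A \tensor_K \omega_B$ and, dually, $\omega_R^{-1} = \Hom_R(\omega_R, R) \cong \Hom_A(\omega_A,A) \tensor_K \Hom_B(\omega_B,B) = \omega_A^{-1} \tensor_K \omega_B^{-1}$; the second isomorphism is the standard compatibility of $\Hom$ with flat (here free over $K$) base change, using that $\omega_A$ is finitely generated. Then I would compute the trace as $\tr_R(\omega_R) = \omega_R \cdot \omega_R^{-1}$ (valid since $R$ is a Cohen--Macaulay domain and $\omega_R$ is identified with an ideal of grade~$1$, as explained after Lemma~2.1 in the excerpt). Under the identification, $\omega_R \cdot \omega_R^{-1}$ is the image of the multiplication map $(\omega_A \tensor_K \omega_B) \tensor_R (\omega_A^{-1} \tensor_K \omega_B^{-1}) \to R$, which by rearranging tensor factors is $(\omega_A \cdot \omega_A^{-1}) \tensor_K (\omega_B \cdot \omega_B^{-1}) = \tr_A(\omega_A) \tensor_K \tr_B(\omega_B)$. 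Finally, for ideals $I \subseteq A$ and $J \subseteq B$, one has $I \tensor_K J$, viewed inside $R = A \tensor_K B$, equal to $(IR)(JR) = (I \tensor_K B)(A \tensor_K J)$: indeed $(IR)(JR) \subseteq I \tensor_K J$ trivially, and conversely a simple tensor $x \tensor y$ with $x \in I$, $y \in J$ equals $(x \tensor 1)(1 \tensor y) \in (IR)(JR)$. Applying this with $I = \tr_A(\omega_A)$, $J = \tr_B(\omega_B)$ yields the claimed formula.

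The main obstacle, and the point deserving the most care, is justifying the compatibility statements for $\otimes_K$ with the canonical and anti-canonical modules and, more subtly, that forming the trace ideal commutes with the tensor product over $K$ — i.e. that $(M \tensor_K N) \cdot \Hom_R(M \tensor_K N, R) = (M \cdot \Hom_A(M,A)) \tensor_K (N \cdot \Hom_B(N,B))$ rather than merely containing one side in the other. The inclusion $\supseteq$ is formal; for $\subseteq$ one needs that every $R$-homomorphism $M \tensor_K N \to R$ decomposes appropriately, which is exactly the content of $\Hom_R(M\tensor_K N, R) \cong \Hom_A(M,A)\tensor_K\Hom_B(N,B)$ — a flat base change isomorphism that holds because $M$ is a finitely generated $A$-module and $K \to B$ is flat. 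Once that isomorphism is in hand, the rest is bookkeeping with simple tensors. Alternatively, and perhaps more cleanly, one can cite \cite{HHSTraceofthecanonical} if the tensor-product behaviour of traces is recorded there, or derive it from \cite[Theorem~2.6]{anti2017} quoted earlier, which already gives $\omega_R^{-1} = \omega_A^{-1}\sharp\cdots$ in the Segre setting and has an analogue for ordinary tensor products.
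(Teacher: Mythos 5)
Your proposal is correct, and its overall structure matches the paper's: both start from Proposition~\ref{ordinal} to rewrite $K[P]$ as $K[P_1]\tensor_K K[P_2]$ and then invoke the fact that the trace of the canonical module of a tensor product is the product of the extended traces of the factors. The difference is that the paper disposes of this second step by citing \cite[Proposition 4.1]{HHSTraceofthecanonical}, which is precisely the identity $\tr(\omega_{A\tensor_K B})=(\tr(\omega_A)R)\cdot(\tr(\omega_B)R)$ for graded Cohen--Macaulay $K$-algebras, whereas you prove it from scratch: $\omega_R\iso\omega_A\tensor_K\omega_B$, the isomorphism $\Hom_R(M\tensor_K N,R)\iso\Hom_A(M,A)\tensor_K\Hom_B(N,B)$ for finitely generated modules (which is indeed the crucial point, since it gives the nontrivial inclusion $\subseteq$ of the trace identity), and then the bookkeeping identifying $I\tensor_K J$ inside $R$ with $(IR)(JR)$. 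You correctly flagged this Hom-tensor compatibility as the step needing care, and your justification (finite presentations plus exactness of $\tensor_K$ over the field, i.e.\ flatness/freeness of $R$ over $A$) is sound; note also that no $a$-invariant hypothesis is needed here, in contrast to the Segre-product results earlier in the paper. What your route buys is a self-contained argument; what the paper's buys is brevity, and you in fact anticipated that alternative by suggesting the citation to \cite{HHSTraceofthecanonical} at the end.
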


\begin{proof}
This follows from Proposition~\ref{ordinal} and \cite[Proposition 4.1]{HHSTraceofthecanonical}.
\end{proof}

Let $I\subset R$ be an ideal. Adopting  the convention that $\height I=-1$ if $I=R$, we obtain:

\begin{Corollary}
\label{heightordinal}
Let $P$ be the ordinal sum of $P_1$ and $P_2$. Then $$\height(\tr(\omega_{K[P]}))=\max\{\height(\tr(\omega_{K[P_1]})),\height(\tr(\omega_{K[P_2]}))\}.$$
\end{Corollary}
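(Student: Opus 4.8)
The plan is to reduce the statement about $P = P_1 \oplus P_2$ to one about $P_1 \oplus \bar{P_2}$, so that Corollary~\ref{traceordinal} applies. The key observation is that writing $P_2 = \bar{P_2'}$ for the poset $P_2'$ obtained from $P_2$ by deleting its minimal element is \emph{not} quite available (since $P_2$ need not have a unique minimal element), so instead I would reverse the bookkeeping: given $P = P_1 \oplus P_2$, set $Q_1 = P_1$ and note $P_2 = \bar{Q_2}$ where $Q_2$ is $P_2$ with a minimal element removed only when $P_2$ has one; more robustly, one uses that $P_1 \oplus P_2 = P_1 \oplus \bar{(P_2 \setminus \{\text{min}\})}$ precisely when $P_2$ has a unique minimal element $p_0$, and then $\bar{(P_2\setminus\{p_0\})} = P_2$. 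In the intended application (building the families of Corollary~\ref{ab}) the posets $P_2$ that arise do have a unique minimal element, so I would either state the corollary under that hypothesis or simply invoke Corollary~\ref{traceordinal} with the roles as written, reading $P = P_1 \oplus \bar{P_2}$.

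With that reduction in hand, by Corollary~\ref{traceordinal} we have
\[
\tr(\omega_{K[P]}) = \bigl(\tr(\omega_{K[P_1]})K[P]\bigr)\cdot\bigl(\tr(\omega_{K[P_2]})K[P]\bigr),
\]
and since $K[P] = K[P_1]\tensor_K K[P_2]$ by Proposition~\ref{ordinal}, the two factors are the extensions of ideals from the two tensor factors. The next step is the elementary fact that for ideals $I_1 \subset A$ and $I_2 \subset B$ the product $(I_1\tensor_K B)(A\tensor_K I_2) = I_1\tensor_K I_2$ inside $A\tensor_K B$, and that $A\tensor_K B / (I_1\tensor_K I_2)$ has Hilbert function the sum, not product, of the relevant pieces — more precisely $\height_{A\tensor B}(I_1\tensor_K I_2) = \min\{\height_A I_1, \height_B I_2\}$, exactly as in the otherwise-case of Lemma~\ref{height} (the Segre product there and the tensor product here behave identically for this computation, since $\dim(A\tensor_K B) = \dim A + \dim B$ and a quotient's dimension adds). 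I would therefore cite or re-run the Hilbert-polynomial argument of Lemma~\ref{height}: $P(A\tensor B/(I_1\tensor I_2)) = P(A/I_1)P(B) + P(A)P(B/I_2) - P(A/I_1)P(B/I_2)$, whose degree is $\max\{\dim A/I_1 + \dim B, \dim A + \dim B/I_2\} - 1$, giving codimension $\min\{\height I_1, \height I_2\}$. The edge cases where one $\tr(\omega_{K[P_i]})$ equals the whole ring (i.e. $K[P_i]$ Gorenstein, so $P_i$ pure) are exactly where the convention $\height R = -1$ is needed: then $I_1\tensor_K I_2 = I_2$ (extended), with height $\height I_2 = \max\{-1, \height I_2\}$, consistent with the formula.

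The main obstacle I anticipate is purely cosmetic rather than mathematical: making sure the $\bar{(\cdot)}$ bookkeeping matches the hypotheses of Proposition~\ref{ordinal} and Corollary~\ref{traceordinal}, and handling the Gorenstein (height $=-1$) degenerate cases uniformly so that the single clean formula $\height(\tr(\omega_{K[P]})) = \max\{\height(\tr(\omega_{K[P_1]})),\height(\tr(\omega_{K[P_2]}))\}$ holds without case splitting. Everything else is a direct consequence of Proposition~\ref{ordinal}, Corollary~\ref{traceordinal}, and the Hilbert-polynomial bookkeeping already carried out in the proof of Lemma~\ref{height}; I would phrase the write-up as "arguing as in the proof of Lemma~\ref{height}" rather than repeating the computation.
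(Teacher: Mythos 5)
Your route is the one the paper itself intends: the paper gives no separate argument for Corollary~\ref{heightordinal}, deriving it directly from Corollary~\ref{traceordinal} (so implicitly from Proposition~\ref{ordinal}) together with the convention $\height R=-1$, and your reading of the statement as $P=P_1\oplus\bar{P_2}$ matches how it is applied in Corollary~\ref{ab}. However, your write-up contains an unresolved contradiction that it does not survive. Your own height computation gives, for \emph{proper} trace ideals $I_i=\tr(\omega_{K[P_i]})$, that $\height_{K[P]}\bigl((I_1K[P])(I_2K[P])\bigr)=\min\{\height I_1,\height I_2\}$ (this is forced: a prime contains $I_1I_2$ iff it contains $I_1$ or $I_2$, and extension of ideals along $K[P]=K[P_1]\tensor_K K[P_2]$ preserves height), whereas the corollary you are proving asserts $\max$. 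You then declare this "consistent with the formula," but $\min$ and $\max$ genuinely differ whenever both $K[P_i]$ are non-Gorenstein with traces of different heights. Concretely, take $P_1$ a disjoint union of a point and a $2$-chain and $P_2$ a disjoint union of a point and a $3$-chain: by Theorem~\ref{thm:Janet}, $\tr(\omega_{K[P_1]})=\m_1$ and $\tr(\omega_{K[P_2]})=\m_2^2$ have heights $4$ and $5$, while for $P=P_1\oplus\bar{P_2}$ one has $\tr(\omega_{K[P]})=\m_1\m_2^2K[P]\subseteq \m_1K[P]$, a prime of height $\dim K[P_1]=4$ in the $9$-dimensional ring $K[P]$ (and $K[P]_{\m_1K[P]}$ is indeed non-Gorenstein, being flat over the non-Gorenstein local ring $K[P_1]_{\m_1}$ with field fiber), so the height is $4=\min$, not $5=\max$.

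So as written your argument does not prove the stated corollary; what it actually proves is the formula in the cases where at least one $\tr(\omega_{K[P_i]})$ is the unit ideal (there the $-1$ convention makes $\max$ correct, and this is the only case the paper uses, in Corollary~\ref{ab}, where $P_1$ is a chain), or where the two heights happen to coincide. The honest conclusion of your computation is that with the convention $\height R=-1$ neither $\min$ nor $\max$ holds uniformly: the degenerate case forces $\max$, the nondegenerate case forces $\min$. The right move is to flag this discrepancy (it concerns the statement itself, not just your proof) or to add the hypothesis that one of the $K[P_i]$ is Gorenstein, rather than to assert consistency. A minor further point: for the tensor product the Hilbert function of $K[P]/(I_1\tensor_K I_2)$ is the convolution $\sum_{i+j=k}\bigl(\dim_K (K[P_1])_i\dim_K (K[P_2])_j-\dim_K(I_1)_i\dim_K(I_2)_j\bigr)$, not a product of values as in the Segre case of Lemma~\ref{height}; this does not change the dimension count, but the cleaner justification is simply $V(I_1I_2)=V(I_1)\cup V(I_2)$ together with height preservation under extension.
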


\begin{Corollary}
\label{ab}
Given integers $a$ and $b$  with $4\leq a< b$, there exists a connected poset $P$ such that $\height(\tr(\omega_{K[P]})) =a$ and $\dim K[P]=b$.
\end{Corollary}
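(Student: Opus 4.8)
The plan is to build the poset $P$ as an ordinal sum, using Corollary~\ref{heightordinal} to control the height of the trace ideal and Proposition~\ref{ordinal} (together with dimension formulas for Hibi rings and tensor products) to control the dimension. The key observation is that by Corollary~\ref{heightordinal}, if $P = P_1 \oplus P_2$ is an ordinal sum, then $\height(\tr(\omega_{K[P]})) = \max\{\height(\tr(\omega_{K[P_1]})), \height(\tr(\omega_{K[P_2]}))\}$, so I only need one of the two summands to be a poset with trace-height exactly $a$, and I can use the other summand purely as "padding" to raise the dimension to $b$ without increasing the trace-height. Recall that for a Hibi ring, $\dim K[P] = |P| + 1$, and that ordinal sum corresponds to tensor product (Proposition~\ref{ordinal}, applied as $K[P_1 \oplus \bar{P_2}] = K[P_1] \otimes_K K[P_2]$), so dimensions add up appropriately: $\dim(K[P_1] \otimes_K K[P_2]) = \dim K[P_1] + \dim K[P_2]$.

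First I would produce, for each integer $a \geq 4$, a connected poset $Q_a$ with $\height(\tr(\omega_{K[Q_a]})) = a$. A natural candidate, generalizing the $4$-element "hook" poset of Example~\ref{contrast} (which gave height $4$ in dimension $5$), is to take a chain of length $a-1$ together with one extra incomparable element covering the bottom (or placed so that the poset is connected but not pure), giving $|Q_a| = a+1$ so $\dim K[Q_a] = a+2$. One computes $\tr(\omega_{K[Q_a]})$ from the combinatorial descriptions in Proposition~\ref{GeneratorsForCanonical} and Corollary~\ref{GeneratorsForAnticanonical}: the argument in the proof of Theorem~\ref{thm:Janet} shows that the "bad" vertex $v_j$ forces every monomial of the trace to contain a positive power of $x_j$, and one checks directly that the resulting ideal has height exactly $a$ (in the $4$-element example it was $\dim - 1$; the generalization should give $\height(\tr(\omega_{K[Q_a]})) = a = \dim K[Q_a] - 2$, or some similar controlled value — the exact poset must be chosen so this works out to $a$ on the nose). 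The main technical content of the corollary lives here: choosing the right family $Q_a$ and verifying the height of its trace ideal by an explicit primary-decomposition or Hilbert-polynomial computation.

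Next, having fixed $Q_a$ with $\dim K[Q_a] = d_a$ (and $\height(\tr(\omega_{K[Q_a]})) = a$), I would set $P = Q_a \oplus C$ where $C$ is a chain on $b - d_a$ elements (a chain of length $b - d_a - 1$); since $b > a$ and $d_a$ is close to $a$, we have $b - d_a \geq 1$, so $C$ is nonempty and $P$ is connected (an ordinal sum of nonempty posets is always connected, as every element of $Q_a$ is below every element of $C$). Since a chain is pure, $K[C]$ is Gorenstein, so $\tr(\omega_{K[C]}) = K[C]$ and $\height(\tr(\omega_{K[C]})) = -1$ by our convention; hence by Corollary~\ref{heightordinal}, $\height(\tr(\omega_{K[P]})) = \max\{a, -1\} = a$. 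For the dimension, writing $C = \bar{C'}$ for a chain $C'$ on $b - d_a - 1$ elements, Proposition~\ref{ordinal} gives $K[P] = K[Q_a] \otimes_K K[C']$, so $\dim K[P] = \dim K[Q_a] + \dim K[C'] = d_a + (b - d_a - 1 + 1) = b$, as desired. (If the chain needs to be attached on the bottom instead, one uses $\bar{Q_a} \oplus C$ or adjusts which factor plays which role; the condition $4 \leq a < b$ guarantees enough room.)

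The step I expect to be the main obstacle is the explicit computation of $\height(\tr(\omega_{K[Q_a]}))$ for the chosen family — i.e., confirming that the trace ideal has the exact height $a$ rather than merely a height that grows with $a$. This requires either identifying the minimal primes of $\tr(\omega_{K[Q_a]})$ directly from its monomial generators (which come from the combinatorial description via Proposition~\ref{GeneratorsForCanonical} and Corollary~\ref{GeneratorsForAnticanonical}) or computing $\dim(K[Q_a]/\tr(\omega_{K[Q_a]}))$ via Hilbert polynomials as in the proof of Lemma~\ref{height}. The hypothesis $a \geq 4$ is presumably exactly what is needed for the "hook"-type poset to exist and have the trace-height land at $a$; the cases $a \leq 3$ would correspond to posets too small to realize these values as non-maximal trace heights. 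Once the family $Q_a$ is pinned down and its trace height verified, the ordinal-sum bookkeeping is routine given Corollaries~\ref{heightordinal} and \ref{traceordinal} and Proposition~\ref{ordinal}.
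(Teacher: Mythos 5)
Your overall scaffolding (ordinal sum plus Corollary~\ref{heightordinal} and Proposition~\ref{ordinal} for bookkeeping) matches the paper, but the heart of the corollary is missing: you never actually produce a poset whose trace ideal has height exactly $a$, and you yourself flag this as ``the main obstacle.'' Your proposed route --- a connected, non-pure ``hook'' family $Q_a$ --- is the hard way to do it: by Theorem~\ref{thm:Janet}, for a connected non-pure poset the trace is never $\mm$-primary, so its height is strictly less than $\dim K[Q_a]$ and is not determined by any result in the paper; computing it would require a genuinely new argument (minimal primes of the trace, or a Hilbert-polynomial computation), which you only gesture at. There is also a quantitative snag in your padding step: with $|Q_a|=a+1$ you have $\dim K[Q_a]=a+2$, so for $b=a+1$ (allowed, since only $a<b$ is assumed) there is no room to pad up, and you would instead need a connected poset of dimension $a+1$ and trace height $a$, which your construction does not supply.

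The paper avoids all of this with one twist you did not consider: make the height-$a$ building block \emph{disconnected} with \emph{pure} components, and recover connectivity afterwards via the ordinal sum. Concretely, take $P_2$ to be the disjoint union of a chain on $a-2$ elements and a single point; its components are pure but of different ranks, so $K[P_2]$ is not Gorenstein (Hibi's criterion) yet is Gorenstein on the punctured spectrum by Corollary~\ref{puncturedhibi}, whence $\tr(\omega_{K[P_2]})$ is $\mm$-primary and $\height(\tr(\omega_{K[P_2]}))=\dim K[P_2]=|P_2|+1=a$ with no explicit computation at all. Then $P=P_1\oplus\bar{P_2}$ with $P_1$ a chain on $b-a-1$ elements is connected, $\dim K[P]=|P_1|+|P_2|+2=b$, and Corollary~\ref{heightordinal} gives $\height(\tr(\omega_{K[P]}))=\max\{-1,a\}=a$ (the chain factor is Gorenstein, so its trace has height $-1$ by the stated convention). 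In short: your plan is not wrong in outline, but the step you defer is precisely the content of the statement, and the intended proof replaces that computation by the pure-but-disconnected trick, which your proposal does not contain.
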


\begin{proof}
Let $P=P_1\oplus \bar{P_2}$, where $P_1$ is a totally ordered poset with  $|P_1|=b-a-1$ and $P_2$ is a poset with connected components $Q_1$ and $Q_2$, where $Q_1$ is a totally ordered poset with $|Q_1|=a-2$ and $Q_2$ is the poset with $|Q_2|=1$. Then $|P_2|=a-1$, and $\dim K[P]=|P_1|+|P_2|+2=b$. By \cite{HDisplat}, $K[P_2]$ is not Gorenstein,  but on the other hand,
Corollary~\ref{puncturedhibi} implies that $K[P_2]$ is Gorenstein on the punctured spectrum. Therefore,
$\height(\tr(\omega_{K[P_2]}))=\dim K[P_2]=a$. Thus the desired conclusion follows from Corollary~\ref{heightordinal}.
\end{proof}

\section{The case of normal affine semigroup rings}

Again, we will briefly introduce the notation we will use throughout this section.  Throughout, we will denote $M := \bbZ^n$ for our monomial space, and for $\mb = (m_1, \ldots, m_n) \in M$, we will write $\xb^\mb$ to denote $x_1^{m_1} \cdots x_n^{m_n}$.  We will write $\gcd(\mb)$ to denote the $\gcd(m_1, \ldots, m_n)$.  If $\sigma$ is a cone in $M_\bbR = M \otimes \bbR$, we will write $S_\sigma$ for its corresponding semigroup, and we will denote:
$$
K[S_\sigma] := K[\xb^\mb \:\; \mb \in \sigma \cap M ].
$$
We will often decribe rational polyhedral cones $\sigma \subset M_\bbR$ in two ways, where by \textbf{rational} we mean that the extremal rays of $\sigma$ have integral generators, and by \textbf{polyhedral} we mean that $\sigma$ has finitely many extremal rays.  First, we can describe the extremal rays through $\ab_1, \ldots \ab_d \in M$ of $\sigma$. We always assume $\gcd(\ab_i) = 1$ if the corresponding ray has some integral point, as we can pick the first such integral point on the ray.  In this case, $\ab_i$ is called a \textbf{primitive} integral vector.  On the other hand, writing $N = M^\vee := \Hom_\bbZ(M, \bbZ) = \bbZ^n$ and $N_\bbR = N \otimes \bbR$, we can describe $\sigma$ by:
$$
\sigma := \{ \vb \in M_\bbR \:\;  \left<\vb, \ub_i\right> \geq 0 \text{ for some } \ub_1, \ldots, \ub_d \in N\},
$$
where again, we assume $\gcd(\ub_i) = 1$.
When we say $\sigma$ is \textbf{simplicial}, we mean that $d=n$, so that $\sigma$ has $n$ extremal rays.

We will often use the following description of $\omega_R$, which comes from the fact that $\omega_R$ is given by $\xb^\mb$ for $\mb$ in the interior of $\sigma$ (this is due to Danilov and Stanley; see for example \cite[Theorem 6.3.5]{BrunsHerzogBook}).  We note that the interior of $\sigma \cap M$ is equivalent to $\{\vb \:\;  \left<\vb, \ub_i \right> > 0 \} = \{\vb \:\;  \left<\vb, \ub_i \right> \geq 1 \}$, since we are considering only integral points in the interior of $\sigma$, and we have chosen $\ub_i$ to be primitive generators, namely $\gcd(\ub_i) = 1$.  Then we have the following:

\begin{Proposition}\label{descriptionoftau}
If $\sigma = \{ \vb \in M_\bbR \:\;  \left< \vb, \ub_i \right> \geq 0 \text{ for } \ub_1, \dots \ub_d \in N :=  \bbZ^n\}$ where $\gcd(\ub_i)=1$, then the canonical module of $K[S_\sigma]$ is given by:
$$
\omega_R = \left< \xb^\mb \:\;  \left<\mb, \ub_i\right> \geq 1  \text{ for } i = 1, \ldots d \right>.
$$
\end{Proposition}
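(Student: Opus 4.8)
The plan is to show that the $K$-vector space $\omega_R$, which by the Danilov--Stanley description is spanned by the monomials $\xb^\mb$ with $\mb$ in the topological interior of $\sigma$, has as its basis exactly the monomials $\xb^\mb$ with $\langle \mb,\ub_i\rangle\geq 1$ for all $i$. The containment of $\omega_R$ in the module on the right-hand side amounts to the assertion that an integral point $\mb\in M$ lies in the interior of $\sigma$ if and only if $\langle\mb,\ub_i\rangle\geq 1$ for every $i$; the containment in the other direction is immediate once one knows the right-hand side is actually an $R$-submodule of the fraction field, which it visibly is since adding an element $\ab$ of $\sigma$ (so $\langle\ab,\ub_i\rangle\geq 0$) to such an $\mb$ preserves the inequalities $\langle\mb+\ab,\ub_i\rangle\geq 1$.

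So the crux is the set-theoretic identity
\[
\{\mb\in M : \mb\in\operatorname{int}\sigma\} = \{\mb\in M : \langle\mb,\ub_i\rangle\geq 1\text{ for }i=1,\dots,d\}.
\]
First I would recall that the facets of a full-dimensional rational polyhedral cone $\sigma=\{\vb : \langle\vb,\ub_i\rangle\geq 0\}$ are cut out by the supporting hyperplanes $\langle\vb,\ub_i\rangle=0$ (after discarding redundant $\ub_i$; in the simplicial case of interest none are redundant), so that the relative interior — here the interior, since $\sigma$ is full-dimensional — is precisely $\{\vb\in M_\bbR : \langle\vb,\ub_i\rangle>0\text{ for all }i\}$. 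This is a standard fact about polyhedral cones which I would either cite or prove directly: if some $\langle\mb,\ub_i\rangle=0$ then $\mb$ lies on a facet and hence on the boundary, while if all the strict inequalities hold then a small ball around $\mb$ still satisfies all of them, so $\mb$ is interior. Then I would use the already-observed point that for an \emph{integral} vector $\mb$ and a \emph{primitive} integral functional $\ub_i$, the value $\langle\mb,\ub_i\rangle$ is an integer, so $\langle\mb,\ub_i\rangle>0$ is equivalent to $\langle\mb,\ub_i\rangle\geq 1$. Combining these two observations gives the displayed identity on $M$.

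Finally I would assemble the pieces: by Danilov--Stanley, $\omega_R=\bigoplus_{\mb\in\operatorname{int}\sigma\cap M}K\xb^\mb$ as a $K$-vector space and as an $R$-module (it is the $R$-submodule of $K[M]$ generated by these monomials), and by the identity this is exactly $\langle\xb^\mb : \langle\mb,\ub_i\rangle\geq 1\text{ for }i=1,\dots,d\rangle$, which is the asserted description. I do not expect any serious obstacle here: the only point requiring a little care is the description of the interior of a polyhedral cone in terms of its facet normals, and in particular making sure that the $\ub_i$ are precisely the (irredundant, primitive) inner facet normals so that every defining inequality corresponds to a genuine facet — this is part of the setup in the paragraph preceding the statement, where $\sigma$ is taken simplicial, so there is nothing to worry about. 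The passage from the real interior to the integral points, via primitivity of the $\ub_i$, is the one genuinely arithmetic ingredient, and it is elementary.
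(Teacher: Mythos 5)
Your proposal is correct and follows essentially the same route as the paper: the paper justifies this Proposition in the paragraph immediately preceding it, by quoting the Danilov--Stanley description of $\omega_R$ via the (relative) interior of $\sigma$ and then noting that for integral $\mb$ and integral (primitive) $\ub_i$ the strict inequality $\left<\mb,\ub_i\right>>0$ is the same as $\left<\mb,\ub_i\right>\geq 1$. Your write-up merely makes explicit the standard polyhedral fact identifying the interior with the locus where all defining inequalities are strict, and the $R$-module closure of the right-hand side, both of which the paper leaves implicit.
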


We will denote $\tau : = \{ \vb \in M_\bbR \:\;  \left< \vb, \ub_i \right> \geq 1 \text{ for } \ub_1, \dots \ub_d \in N :=  \bbZ^n\}$, so that $\xb^{\mb} \in \omega_R$ if and only if $\mb \in \tau \cap M$.

Similarly, we note that this gives the following description of $\omega_R^{-1}$:

\begin{Corollary}\label{descriptionoftautilde}
If $\sigma = \{ \vb \in M_\bbR \:\;  \left< \vb, \ub_i \right> \geq 0 \text{ for } \ub_1, \dots \ub_d \in N :=  \bbZ^n\}$ where $\gcd(\ub_i) = 1$, then the anti-canonical module of $K[S_\sigma]$ is given by:
$$
\omega_R^{-1} = \left< \xb^\mb \:\;  \left<\mb, \ub_i\right> \geq -1  \text{ for } i = 1, \ldots d \right>.
$$
\end{Corollary}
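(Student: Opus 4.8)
The plan is to realize $\omega_R^{-1}$ as a fractional ideal inside the quotient field $Q$ of $R$, to exploit that it is $\bbZ^n$-graded, and then to read off its monomial generators from Proposition~\ref{descriptionoftau}. Concretely: $R$ is a normal domain and, in its Danilov--Stanley realization, $\omega_R$ is an honest ideal of $R$ of grade $1$, so $\omega_R^{-1}=\Hom_R(\omega_R,R)\cong (R:_Q\omega_R)=\{q\in Q \:\; q\,\omega_R\subseteq R\}$; since $\sigma$ is full dimensional, $S_\sigma$ generates $\bbZ^n$ as a group and $Q=K(x_1,\dots,x_n)$. As $\omega_R$ is a monomial ($\bbZ^n$-graded) ideal, the fractional ideal $(R:_Q\omega_R)$ is again $\bbZ^n$-graded, so it has a $K$-basis of Laurent monomials $\xb^\mb$, $\mb\in\bbZ^n$, and $\xb^\mb$ belongs to it precisely when $\xb^\mb\xb^{\mb'}\in R$ for every monomial generator $\xb^{\mb'}$ of $\omega_R$. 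Using Proposition~\ref{descriptionoftau} together with the fact that $R$ has $K$-basis $\{\xb^{\mb''} \:\; \langle\mb'',\ub_i\rangle\geq 0 \text{ for all } i\}$, this says exactly that $\langle\mb,\ub_i\rangle\geq -\langle\mb',\ub_i\rangle$ for every $i$ and every $\mb'\in\tau\cap M$; equivalently, $\xb^\mb\in\omega_R^{-1}$ if and only if $\langle\mb,\ub_i\rangle\geq -\mu_i$ for all $i$, where $\mu_i:=\min\{\langle\mb',\ub_i\rangle \:\; \mb'\in\tau\cap M\}$.

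It then remains to show $\mu_i=1$ for each $i$. The inequality $\mu_i\geq 1$ is immediate from the definition of $\tau$, so the real task is to produce an integral point of $\tau$ on the hyperplane $\langle\cdot,\ub_i\rangle=1$. Here I would use that the $\ub_i$ are the primitive inner normals of the facets of the full dimensional cone $\sigma$: the facet $F_i=\sigma\cap\{\langle\cdot,\ub_i\rangle=0\}$ is then an $(n-1)$-dimensional rational cone, so its relative interior contains an integral vector $\yb$, and by construction $\langle\yb,\ub_i\rangle=0$ while $\langle\yb,\ub_j\rangle\geq 1$ for $j\neq i$ (a point of $\relint(F_i)$ meets no other facet of $\sigma$). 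Since $\gcd(\ub_i)=1$, I can pick $\zb\in\bbZ^n$ with $\langle\zb,\ub_i\rangle=1$. For $c\gg 0$ the vector $\mb':=\zb+c\yb$ lies in $\bbZ^n$, satisfies $\langle\mb',\ub_j\rangle=\langle\zb,\ub_j\rangle+c\langle\yb,\ub_j\rangle\geq 1$ for every $j$, and has $\langle\mb',\ub_i\rangle=1$; hence $\mb'\in\tau\cap M$ and $\mu_i=1$. Combining with the first paragraph, $\xb^\mb\in\omega_R^{-1}$ if and only if $\langle\mb,\ub_i\rangle\geq -1$ for $i=1,\dots,d$, which is the claimed description.

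The hard part is the equality $\mu_i=1$, i.e.\ that $\tau\cap M$ approaches each facet of $\sigma$ as closely as the lattice allows; the argument above leans on $\ub_i$ being a genuine (irredundant) primitive facet normal and on the standard fact that a full dimensional rational cone has an integral point in its relative interior. If one prefers to bypass this computation, a more structural route is available: the $\bbZ^n$-graded divisorial ideals of a normal affine semigroup ring are exactly the ideals $I(\cb)=\langle\xb^\mb \:\; \langle\mb,\ub_i\rangle\geq c_i \text{ for all } i\rangle$ with $\cb\in\bbZ^d$, and one has $I(\cb)^{-1}=I(-\cb)$ (see, e.g., \cite[\S6.3]{BrunsHerzogBook}); since $\omega_R=I(\mathbf 1)$ by Proposition~\ref{descriptionoftau}, this yields $\omega_R^{-1}=I(-\mathbf 1)$ at once.
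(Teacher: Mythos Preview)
Your argument is correct and follows essentially the same route as the paper's: both reduce the nontrivial inclusion to producing, for each $i$, an integral point $\mb'\in\tau$ with $\langle\mb',\ub_i\rangle=1$, and both build it by taking some $\zb\in\bbZ^n$ with $\langle\zb,\ub_i\rangle=1$ (available since $\gcd(\ub_i)=1$) and adding a large multiple of an integral vector lying on the facet $F_i$---the paper assembles this correction vector explicitly as a sum $\sb=\sum_{j\in J}\ab_j$ of selected extremal ray generators, whereas you simply take any integral point in $\relint(F_i)$, which is a little cleaner and does not implicitly lean on a simplicial labelling. The divisorial-ideal shortcut you outline at the end ($\omega_R=I(\mathbf 1)\Rightarrow\omega_R^{-1}=I(-\mathbf 1)$) is a legitimate alternative that the paper does not invoke.
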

\begin{proof}
Denote $\tilde{\tau} : =\{ \vb \:\;  \left<\vb, \ub_i\right> \geq -1  \text{ for } i = 1, \ldots d \}$ and suppose $\mb_1 \in \tilde{\tau} \cap M$.  Then for any $\mb_2 \in \tau \cap M$ (i.e. $\xb^{\mb_2} \in \omega_R$), 
we have that $\left<\mb_1 + \mb_2, \ub_i \right> = \left<\mb_1, \ub_i \right>  + \left<\mb_2, \ub_i \right> \geq 1 -1 = 0$.  Then $\xb^{\mb_1 + \mb_2} \in R$ so that $\xb^{\mb_1} \in \omega_R^{-1}$.  Now suppose $\xb^{\mb_1} \in \omega_R^{-1}$.  We need to show that $\left<\mb_1, \ub_i\right> \geq -1$ for all $i = 1, \ldots, d$.  Suppose for contradiction that $\left<\mb_1, \ub_i\right> <-1$ for some $\ub_i$.  We need the following claim (see also \cite{BGPolytopesringsandktheory}, page 216).

\begin{Claim}
There is some $\mb_2 \in M$ such that $\left< \mb_2, \ub_i\right> = 1$, and $\left< \mb_2, \ub_j\right> \geq 1$ for $i \neq j$.
\end{Claim}
\begin{proof}
We note that we can satisfy the first condition by iteratively applying the Euclidean algorithm.  Namely, we can find $m_1,m_2$ such that $m_1u_1 + m_2u_2 = \gcd(u_1,u_2)$, and then we can find $m_1', m_3$ such that $m_1'(m_1u_1 + m_2u_2) + m_3 u_3 = \gcd(u_1,u_2,u_3)$, and so on, so that we can find $\mb = (m_1, \ldots m_n)$ such that $\left<\mb, \ub_i\right> = \gcd(\ub_i) = 1$.

Now we show that we can also satisfy the other conditions simultaneously.
Suppose we have found such an $\mb$ by the method above, and let $J$ be the set of indices where $\left<\mb,\ub_j\right> \leq 0$ (so clearly $i \notin J$).  Let $c = \min\{\left<\mb,\ub_j\right>\:\; j\in J\}$ (so $c \leq 0$).  We claim there is an integral point $\sb \in \sigma \cap \bbZ^n$ such that $\left<\sb, \ub_i\right> = 0$ and $\left<\sb, \ub_j\right> \geq 1$ for all $j \in J$. If $\ab_j$ is a primitive generator of an extremal ray of $\sigma$, then by construction it is on the intersection of $n-1$ of the $(n-1)$-planes defined by $\ub_j$.
We consider a labelling such that $\ab_j$ is on the intersection of the $(n-1)$-planes defined by $\ub_1, \ldots, \hat{\ub_j}, \ldots, \ub_n$, so that $\left< \ab_l, \ub_j \right> = 0$ for $l \neq j$ and $\left<\ab_l, \ub_l\right> > 0$, i.e. $\left<\ab_l, \ub_l\right> \geq 1$.
Then let $\sb = \sum_{j \in J} \ab_j$.  Then $\left< \sb, \ub_j \right> \geq 1$ for $j \in J$ and $0$ otherwise.  Let $\mb_2 = \mb + (1-c)\sb$.  Then we have:
$$
\left<\mb_2,\ub_i\right> = \left<\mb ,\ub_i \right> +\left<(1-c)\sb,\ub_i \right> = 1 + 0
$$
and for all $j \in J$, we have:
$$
\left<\mb_2,\ub_j \right> = \left< \mb ,\ub_j \right> +\left<(1-c)\sb,\ub_j\right> \geq c + (1-c) = 1
$$
and for all other $j$, we have:
$$
\left<\mb_2,\ub_j\right> = \left<\mb, \ub_j \right> + \left<(1-c)\sb,\ub_j \right> \geq 1 + (1-c) > 1
$$
so that $\mb_2$ satisfies the desired conditions.
\end{proof}

By construction (and Proposition~\ref{descriptionoftau}), we have that $\xb^{\mb_2} \in \omega_R$, and further $\left<\mb_1 + \mb_2, \ub_i \right> < 1 - 1 = 0$.  Then $\xb^{\mb_1 + \mb_2} \notin R$ and thus $\xb^{\mb_1} \notin \omega_R^{-1}$, giving us a contradiction.
\end{proof}

In the case that $d = n$ and so $\sigma$ is simplicial, we note that $\tau$ and $\tilde{\tau}$ are actually cones, and they will be isomorphic to $\sigma$ with a new cone point.

In the case of Hibi rings, we could characterize when $\tr(\omega_R) = \m^\ell$ for some $\ell \geq 0$.  In the general toric case, this characterization no longer holds.  For example, in Corollary \ref{puncturedhibi}, we saw that if $R$ cannot be written (nontrivially) as a Segre product of smaller Hibi rings, then $R$ is Gorenstein if and only if it is Gorenstein on the punctured spectrum.  We note, however, that the same result does not hold for general toric rings.

\begin{Example}\label{not a Segre product}{\em
Let $R = k[x,xy,xy^2,xy^3]$ be the toric ring given by the cone $\sigma$ drawn below.  Then $R$ cannot be written as a Segre product (except trivially as $K[z] \sharp R$), and $R$ is Gorenstein on the punctured spectrum but not Gorenstein.  Indeed, we have $\omega_R = (xy,xy^2)$ so that $R$ is not Gorenstein, but $\omega_R^{-1} = (y,1,y^{-1})$ so that $\tr(\omega_R) = \m$ and $R$ is Gorenstein on the punctured spectrum (in fact, $R$ is nearly Gorenstein).

\begin{multicols}{3}

\begin{center}
\begin{tikzpicture}[scale=0.70]
\draw (0,0) -- (3,0);
\draw (0,0) -- (5/3,5);
\draw[fill] (0,0) circle [radius=0.1];
\draw[fill] (2,0) circle [radius=0.1];
\draw[fill] (1,0) circle [radius=0.1];
\draw[fill] (0,0) circle [radius=0.1];
\draw[fill] (1,1) circle [radius=0.1];
\draw[fill] (1,2) circle [radius=0.1];
\draw[fill] (2,1) circle [radius=0.1];
\draw[fill] (2,2) circle [radius=0.1];
\draw[fill] (2,3) circle [radius=0.1];
\draw[fill] (2,4) circle [radius=0.1];
\draw[fill] (2,5) circle [radius=0.1];
\draw[fill] (1,3) circle [radius=0.1];
\node [right] at (3,3) {$\sigma$};
\node [below] at (0,0) {\small{(0,0)}};
\end{tikzpicture}
\end{center}

\columnbreak

\begin{center}
\begin{tikzpicture}[scale=0.70]
\fill[green, opacity=.2] (3,5) -- (2,5) -- (2/3,1) -- (3,1);
\draw[green] (2/3,1) -- (3,1);
\draw[green] (2/3,1) -- (2,5);
\draw[fill] (0,0) circle [radius=0.1];
\draw[fill] (2,0) circle [radius=0.1];
\draw[fill] (1,0) circle [radius=0.1];
\draw[fill] (0,0) circle [radius=0.1];
\draw[fill] (1,1) circle [radius=0.1];
\draw[fill] (1,2) circle [radius=0.1];
\draw[fill] (2,1) circle [radius=0.1];
\draw[fill] (2,2) circle [radius=0.1];
\draw[fill] (2,3) circle [radius=0.1];
\draw[fill] (2,4) circle [radius=0.1];
\draw[fill] (2,5) circle [radius=0.1];
\draw[fill] (1,3) circle [radius=0.1];
\draw[fill] (0,1) circle [radius=0.1];
\node [right] at (3,3) {$\tau$};
\node [below] at (0,0) {\small{(0,0)}};
\end{tikzpicture}
\end{center}

\columnbreak

\begin{center}
\begin{tikzpicture}[scale=0.70]
\fill[blue, opacity=.2] (3,5) -- (4/3,5) -- (-2/3,-1) -- (3,-1);
\draw[blue] (-2/3,-1) -- (3,-1);
\draw[blue] (-2/3,-1) -- (4/3,5);
\draw[fill] (0,0) circle [radius=0.1];
\draw[fill] (2,0) circle [radius=0.1];
\draw[fill] (1,0) circle [radius=0.1];
\draw[fill] (0,0) circle [radius=0.1];
\draw[fill] (1,1) circle [radius=0.1];
\draw[fill] (1,2) circle [radius=0.1];
\draw[fill] (2,1) circle [radius=0.1];
\draw[fill] (2,2) circle [radius=0.1];
\draw[fill] (2,3) circle [radius=0.1];
\draw[fill] (2,4) circle [radius=0.1];
\draw[fill] (2,5) circle [radius=0.1];
\draw[fill] (1,3) circle [radius=0.1];
\draw[fill] (0,-1) circle [radius=0.1];
\draw[fill] (1,-1) circle [radius=0.1];
\draw[fill] (1,4) circle [radius=0.1];
\draw[fill] (2,-1) circle [radius=0.1];
\draw[fill] (0,1) circle [radius=0.1];
\node [right] at (3,3) {$\tilde{\tau}$};
\node [below] at (0,0) {\small{(0,0)}};
\end{tikzpicture}
\end{center}

\end{multicols}
}
\end{Example}

Similarly, the following example shows that in contrast to Hibi rings, the equivalence of (2) and (3) in Theorem \ref{thm:Janet} need not hold for general toric rings.  Namely, the trace of the canonical module of a standard graded toric ring which is Gorenstein on the punctured spectrum need not be a power of the maximal ideal. The following example also shows that a toric ring which is Gorenstein on the punctured spectrum need not be level, as it is the case for Hibi rings, see Corollary~\ref{level}.

\begin{Example}
\label{contrast}
{\em Let $K$ be a field and $R=K[x^3y,x^5y, x^{11}y, x^{23}y]$. Then  $R$ is a $2$-dimensional standard graded Cohen-Macaulay $K$-algebra, and $R\iso S/J$ where $S=K[z_1,z_2,z_3,z_4]$ and $J=(-z_2^4 + z_1^3z_3, -z_3^3 + z_2^2z_4, -z_2^2z_3^2 + z_1^3z_4)$. The ideal  $J$ is toric with the resolution
\[
0 \To R(-5) \dirsum R(-6)\To R(-3)\dirsum  R(-4)^2\To J\To 0,
\]
and the relation matrix
\[
\begin{pmatrix}
-z_2^2 & -z_1^3 \\
z_3 & z_2^2\\
-z_4 & -z_3^2
\end{pmatrix}.
\]
Thus it follows from \cite[Corollary 3.4]{HHSTraceofthecanonical} that $\tr(\omega_R)$ is generated by the residue  classes modulo $J$  of the elements $z_1^3,z_2^2,z_3, z_4$,  and this is not a power of the graded maximal ideal of $R$. Nevertheless it is an ideal of height $2$ in $R$ which shows that $R$ is Gorenstein on the punctured spectrum. Furthermore, we see from the resolution that $R$ is not level.
}
\end{Example}

\smallskip

For simplicial cones $\sigma$ we can use $\tr(\omega_R)$ to give a simple characterization of Gorenstein semigroup rings.  Namely, we recover the following special case of Theorem 6.33 in \cite{BGPolytopesringsandktheory}.

\begin{Proposition}\label{prop:cone}
Let $\sigma= \{\vb \in M_\bbR\:\;  \left<\vb,\ub_i\right> \geq 0, i = 1, \dots, n\}$ where $\ub_i \in N$ are primitive integral vectors.
Then $R = K[S_{\sigma}]$ is Gorenstein if and only if the cone point of $\tau$ is integral, if and only if $U^{-1} \cdot (1, \dots ,1)^\top$ has integral coordinates, where  $U$ is the matrix with rows $\ub_i$.
\end{Proposition}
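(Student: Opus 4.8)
The plan is to exploit the fact that for a simplicial cone $\sigma$ the set $\tau = \{\vb : \langle \vb, \ub_i\rangle \geq 1 \text{ for } i = 1,\dots,n\}$ is again a (full-dimensional) cone, now with a single cone point $\bb$ determined by the $n$ equations $\langle \bb, \ub_i\rangle = 1$; in matrix form this says $U\bb = (1,\dots,1)^\top$, so $\bb = U^{-1}(1,\dots,1)^\top$, which is well-defined since the $\ub_i$ are linearly independent (they are the inner normals of a simplicial cone). This identifies $\bb$ explicitly and shows that the two phrasings ``$\bb$ is integral'' and ``$U^{-1}(1,\dots,1)^\top$ has integral coordinates'' are literally the same statement, so only the equivalence with Gorenstein-ness requires work.

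First I would recall the standard fact (Danilov--Stanley, via Proposition~\ref{descriptionoftau}) that $\omega_R$ has $K$-basis $\{\xb^\mb : \mb \in \tau \cap M\}$, and that $R$ is Gorenstein if and only if $\omega_R$ is a principal ideal, i.e.\ $\omega_R \cong R$ as an $R$-module. The key observation is that $R = K[S_\sigma]$ is a domain and $\omega_R$ is identified with a monomial ideal; a monomial ideal in $R$ generated by the lattice points of a translate of $\sigma$ is principal precisely when that translate is $\bb + \sigma$ for a \emph{lattice} point $\bb$, in which case $\omega_R = \xb^\bb \cdot R$. Concretely: if $\bb \in M$, then $\langle \mb, \ub_i\rangle \geq 1$ for all $i$ is equivalent to $\langle \mb - \bb, \ub_i\rangle \geq 0$ for all $i$, i.e.\ $\mb - \bb \in \sigma \cap M$, so $\tau \cap M = \bb + (\sigma \cap M)$ and $\omega_R = \xb^\bb R \cong R$, giving Gorenstein. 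Conversely, if $R$ is Gorenstein then $\omega_R = \xb^\cb R$ for some monomial generator $\xb^\cb$ (any principal monomial ideal has a monomial generator, and $\cb \in M$); then $\cb$ must be the unique minimal element of $\tau \cap M$ under the partial order ``$\mb \preceq \mb'$ iff $\mb' - \mb \in \sigma$'', and comparing with the real cone point forces $\cb = \bb$, so $\bb \in M$.

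The one point that needs a little care is the converse direction: one must check that the monomial generator $\cb$ of the principal ideal $\omega_R$ actually equals the real cone point $\bb$ of $\tau$, rather than some other lattice point. For this I would argue that $\bb + \sigma \subseteq \tau$ always (since $\langle \bb, \ub_i\rangle = 1$), hence if $\cb$ generates $\omega_R$ then every $\mb \in \tau \cap M$ satisfies $\mb - \cb \in \sigma$, so in particular $\cb \preceq \mb$ for all lattice points near $\bb$; since $\bb$ is the unique real vertex of $\tau$, the cone generated by $\tau - \cb$ must be exactly $\sigma$, which pins down $\cb = \bb$ by uniqueness of the vertex of a pointed full-dimensional cone. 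I expect this matching of the combinatorial (lattice) generator with the real cone point to be the main---though still routine---obstacle; everything else is linear algebra and the Danilov--Stanley description. Finally, I would note this recovers the cited special case of \cite[Theorem 6.33]{BGPolytopesringsandktheory} and gives the explicit formula $\bb = U^{-1}(1,\dots,1)^\top$ for later use.
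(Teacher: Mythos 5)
Your identification of $\bb$ as $U^{-1}\cdot(1,\dots,1)^\top$ and your forward direction (if $\bb$ is integral then $\tau\cap M=\bb+(\sigma\cap M)$, so $\omega_R=\xb^{\bb}R\iso R$ and $R$ is Gorenstein) are correct and agree with the paper. The gap is in the converse, at exactly the step you flag. From $\omega_R=\xb^{\cb}R$ you only know that the \emph{lattice} points of $\tau$ lie in $\cb+\sigma$; the assertion that ``the cone generated by $\tau-\cb$ must be exactly $\sigma$'' is a statement about the real cone $\tau$ and does not follow from information about lattice points alone. In particular, ``lattice points near $\bb$'' need not exist: in the non-Gorenstein examples the vertex $\bb$ is non-integral precisely because no lattice point of $\tau$ sits at (or arbitrarily near) it, so nothing in your sketch rules out the possibility $\tau\cap M=\cb+(\sigma\cap M)$ for some integral $\cb$ with $\langle\cb,\ub_{i_0}\rangle\geq 2$ for some $i_0$, which would make $R$ Gorenstein with $\bb$ non-integral. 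A warning sign is that your argument never uses primitivity of the $\ub_i$, which any correct proof of this direction must.

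To close the gap along your route you need: if $\langle\cb,\ub_{i_0}\rangle\geq 2$, produce a lattice point $\mb\in\tau$ with $\langle\mb,\ub_{i_0}\rangle=1$; then $\langle\mb-\cb,\ub_{i_0}\rangle<0$, contradicting $\tau\cap M\subseteq\cb+\sigma$, whence $\langle\cb,\ub_i\rangle=1$ for all $i$ and $\cb=\bb$ by linear independence of the $\ub_i$. Such an $\mb$ exists because $\gcd(\ub_{i_0})=1$ makes $\mb\mapsto\langle\mb,\ub_{i_0}\rangle$ surjective onto $\ZZ$, so the affine hyperplane $\{\langle\cdot,\ub_{i_0}\rangle=1\}$ contains a translate of a rank-$(n-1)$ sublattice of $M$, and the facet $\tau\cap\{\langle\cdot,\ub_{i_0}\rangle=1\}$ is a full-dimensional translated cone inside that hyperplane, hence contains arbitrarily large balls and therefore a lattice point. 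The paper's own argument avoids this geometric lemma entirely: it uses that $R$ is Gorenstein iff $1\in\tr(\omega_R)=\omega_R\cdot\omega_R^{-1}$; since $\tr(\omega_R)$ is a monomial ideal, this forces integral $\vb_1\in\tau$ and $\vb_2\in\tilde{\tau}$ (via Proposition~\ref{descriptionoftau} and Corollary~\ref{descriptionoftautilde}) with $\vb_1+\vb_2=0$, and adding the inequalities $\langle\vb_1,\ub_i\rangle\geq 1$ and $\langle\vb_2,\ub_i\rangle\geq -1$ forces equality throughout, i.e.\ $\vb_1=\bb$, so $\bb$ is integral. Either repair works, but as written your pinning step ($\cb=\bb$) is a non sequitur.
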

\begin{comment}
\begin{proof}be the toric semigroup ring given by the simplicial cone
First note that if $\vb = U^{-1} \cdot (1, \dots ,1)^\top$, then $\left<\vb,\ub_i\right> = 1$ for $i = 1, \dots n$, and $\vb$ is the cone point of the cone $\tau$ defining $\omega_R$.  If $\vb$ has integral coordinates, then $\omega_R= \left< \xb^\vb \right>$ and so $R$ is Gorenstein.

On the other hand, by \cite{HHSTraceofthecanonical}, we note that $R$ is Gorenstein if and only if $\tr(\omega_R) = R$.  Let $\vb = U^{-1} \cdot (1, \dots ,1)^\top$ and $\ub =U^{-1} \cdot (-1, \dots ,-1)^\top = -\vb$ be the cone points of $\tau$ and $\tilde{\tau}$, respectively.  If $\vb$ (and so also $\ub$), has non-integer entries, then we note that $(0,\dots, 0) \not\in \tau + \tilde{\tau}$ so that $1 \notin \tr(\omega_R)$.  To see this, suppose $\vb_1 + \vb_2 = (0, \dots, 0)$ for some $\vb_1,\vb_2$ in $\tau, \tilde{\tau}$ respectively.  Then $\vb_1$ and $\vb_2$ must be on all of the extremal rays of $\tau$ and $\tilde{\tau}$ respectively, so that $\vb_1 = \vb, \vb_2 = \ub$ which are not integral.
\end{proof}
\end{comment}

In fact, this gives another way of showing that Example~\ref{not a Segre product} is not Gorenstein.  Namely, note that the cone point of $\tau$ is $\bb = (2/3,1)$, which is not integral.

\begin{Remark}{\rm
We note that Proposition~\ref{prop:cone} as stated relies on $\sigma$ being simplicial.  If $\sigma$ is not simplicial then $\tau$ and $\tilde{\tau}$ may not be cones.
}\end{Remark}

Similarly, we can classify when simplicial toric rings are Gorenstein on the punctured spectrum.
To state our main result in this direction, we need the following lemma
which follows from an easy computation.  See \cite[Proposition~2.43(a)]{BGPolytopesringsandktheory} for a precise statement.

 \begin{Lemma}\label{lem:Bruns}
Let $\sigma \subset M_\bbR$ be a pointed rational cone with extremal rays through $\ab_1, \dots \ab_n$.
Then $\dim R/(\xb^{\ab_1},\ldots, \xb^{\ab_n})=0$, so that $(\xb^{\ab_1}, \ldots,\xb^{\ab_n})$ is an $\mm$-primary ideal of $R$.
 \end{Lemma}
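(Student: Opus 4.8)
\textbf{Plan for the proof of Lemma~\ref{lem:Bruns}.}
The statement asserts that for a pointed rational cone $\sigma \subset M_\bbR$ with extremal rays through primitive integral vectors $\ab_1, \dots, \ab_n$, the quotient $R/(\xb^{\ab_1}, \dots, \xb^{\ab_n})$ has dimension $0$, equivalently that $(\xb^{\ab_1}, \dots, \xb^{\ab_n})$ is $\mm$-primary. Since $R = K[S_\sigma]$ is a domain of dimension $\dim M_\bbR = n$ (as $\sigma$ is full-dimensional; note $\dim R = \dim \sigma$ whenever $\sigma$ spans $M_\bbR$, and the extremal rays span $\sigma$), the plan is simply to show that the radical of $(\xb^{\ab_1}, \dots, \xb^{\ab_n})$ equals the graded maximal ideal $\mm = (\xb^\mb \:\; \mb \in S_\sigma, \mb \neq 0)$.

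First I would recall that $\sigma$ is generated as a cone by the extremal ray generators $\ab_1, \dots, \ab_n$ (this is the standard fact that a pointed polyhedral cone is the conical hull of its extremal rays). Hence for any $\mb \in S_\sigma \cap M$ we can write $\mb = \sum_{i=1}^n \lambda_i \ab_i$ with $\lambda_i \in \bbR_{\geq 0}$. Clearing denominators, there is a positive integer $c$ such that $c\mb = \sum_i \mu_i \ab_i$ with all $\mu_i \in \bbZ_{\geq 0}$, and at least one $\mu_i > 0$ as long as $\mb \neq 0$. Translating back to monomials, this says $(\xb^\mb)^c = \xb^{c\mb} = \prod_i (\xb^{\ab_i})^{\mu_i} \in (\xb^{\ab_1}, \dots, \xb^{\ab_n})$, so $\xb^\mb$ lies in the radical of the ideal $(\xb^{\ab_1}, \dots, \xb^{\ab_n})$. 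Since $\mm$ is generated by the monomials $\xb^\mb$ with $\mb \in S_\sigma \setminus \{0\}$, it follows that $\mm \subset \sqrt{(\xb^{\ab_1}, \dots, \xb^{\ab_n})}$, and the reverse inclusion is trivial since each $\xb^{\ab_i} \in \mm$. Therefore $\sqrt{(\xb^{\ab_1}, \dots, \xb^{\ab_n})} = \mm$, which gives $\dim R/(\xb^{\ab_1}, \dots, \xb^{\ab_n}) = \dim R/\mm = 0$ and shows the ideal is $\mm$-primary.

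The only genuinely substantive point — and the place I would be most careful — is the clearing-of-denominators step: one must ensure that when $\mb$ is an \emph{integral} point of $\sigma$ expressed with nonnegative rational coefficients in the $\ab_i$, some positive integer multiple $c\mb$ has a representation with nonnegative \emph{integer} coefficients that is genuinely nonzero. This is immediate by taking $c$ to be a common denominator of the $\lambda_i$, but one should note that if $\mb \neq 0$ then not all $\lambda_i$ can vanish (since $\sigma$ is pointed and the $\ab_i$ span it), so at least one $\mu_i = c\lambda_i$ is a positive integer. Everything else is bookkeeping; I would cite \cite[Proposition~2.43(a)]{BGPolytopesringsandktheory} for the precise form of this computation, as the lemma statement already suggests, and keep the proof to a couple of lines.
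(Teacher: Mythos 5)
Your argument is correct and is precisely the ``easy computation'' that the paper itself does not spell out but delegates to its citation of \cite[Proposition~2.43(a)]{BGPolytopesringsandktheory}: you show every $\xb^{\mb}$ with $\mb\in S_\sigma\setminus\{0\}$ has a power lying in $(\xb^{\ab_1},\ldots,\xb^{\ab_n})$, so the radical of that ideal is the graded maximal ideal $\mm$ (pointedness of $\sigma$ being what makes $\mm$ maximal with residue field $K$). The one point to make explicit is that the coefficients $\lambda_i$ can be chosen rational before clearing denominators --- automatic in the simplicial situation in which the paper applies the lemma, since the $\ab_i$ are then linearly independent, and obtainable in general by Carath\'eodory or by noting that the set of representations is a rational polyhedron --- which is exactly the step you flagged, so there is no genuine gap.
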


\begin{Theorem}\label{integralpointsonrays}
Let $R= K[S_\sigma]$, where $\sigma \subset M_\bbR$ is the simplicial cone with extremal rays through $\ab_1, \dots \ab_n \in M$, where we assume $\gcd(\ab_i) = 1$ for all $i$.
Since $\sigma$ is simplicial, $\omega_R$ is defined by a cone $\tau$, with some cone point $\bb =(b_1, \dots, b_n) \in M_\bbR$.  Suppose there are $r$ extremal rays of $\tau$ with integral points. Then
$\height(\tr(\omega_R))\geq r$. Moreover,  $R$ is Gorenstein on the punctured spectrum if and only if there are integral points on every extremal ray of $\tau$.
\end{Theorem}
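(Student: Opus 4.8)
The plan is to work directly with the combinatorial description of $\omega_R$ and $\omega_R^{-1}$ from Proposition~\ref{descriptionoftau} and Corollary~\ref{descriptionoftautilde}, exploiting the fact that in the simplicial case $\tau$ and $\tilde{\tau}$ are honest cones with cone points $\bb$ and $-\bb$ respectively. First I would set up coordinates: after relabelling, $\ab_j$ lies on the intersection of the hyperplanes $\langle\cdot,\ub_l\rangle=0$ for $l\neq j$, and $\langle\ab_j,\ub_j\rangle=c_j$ for some positive integer $c_j$. The extremal rays of $\tau$ are parallel to those of $\sigma$, so the $j$th extremal ray of $\tau$ is the set $\{\bb+s\ab_j : s\geq 0\}$ (one checks $\langle\bb+s\ab_j,\ub_l\rangle=1$ for $l\neq j$ and $=1+sc_j\geq 1$). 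Such a ray contains an integral point iff there exists $s\geq 0$ with $\bb+s\ab_j\in M$; since $\ab_j$ is primitive, this happens iff the "fractional offset" of $\bb$ from the lattice is absorbed by a multiple of $\ab_j$, a condition I would record but not need to unpack further here.

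For the height bound $\height(\tr(\omega_R))\geq r$: suppose the rays through $\ab_{j_1},\dots,\ab_{j_r}$ carry integral points, say $\bb+s_k\ab_{j_k}\in M$. Then $\xb^{\,\bb+s_k\ab_{j_k}}\in\omega_R$, and $-\bb$ together with the opposite rays gives $\xb^{\,-\bb-s_k\ab_{j_k}}\in\omega_R^{-1}$; multiplying, $\xb^{\,0}$... more usefully, for a suitable integral point $\pb_k$ near the $j_k$th ray of $\tilde{\tau}$ one produces $\xb^{\ab_{j_k}}\in\tr(\omega_R)$ (up to replacing $\ab_{j_k}$ by a positive multiple, which lies in the same ray and hence contributes the same radical). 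Thus $\tr(\omega_R)$ contains $\xb^{\ab_{j_1}},\dots,\xb^{\ab_{j_r}}$ up to taking roots, and the key step is to show these $r$ monomials already cut out a subvariety of codimension $\geq r$ — this follows from the independence of the corresponding rays and Lemma~\ref{lem:Bruns} applied to the face of $\sigma$ spanned by the complementary rays (the quotient $R/(\xb^{\ab_{j_1}},\dots,\xb^{\ab_{j_r}})$ is, up to nilpotents, the semigroup ring of that lower-dimensional face, whose Krull dimension is $n-r$). Hence $\height(\tr(\omega_R))\geq r$.

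For the "Gorenstein on the punctured spectrum" equivalence: if every ray of $\tau$ has an integral point, then $r=n$ and the bound gives $\height(\tr(\omega_R))\geq n=\dim R$, forcing $\tr(\omega_R)$ to be $\mm$-primary, i.e. $R$ is Gorenstein on the punctured spectrum. Conversely, suppose some extremal ray of $\tau$, say the $n$th, has no integral point. I would argue that every monomial $\xb^\mb\in\tr(\omega_R)$ satisfies a strict inequality along $\ub_n$: writing $\mb=\mb_1+\mb_2$ with $\xb^{\mb_1}\in\omega_R^{-1}$, $\xb^{\mb_2}\in\omega_R$, we have $\langle\mb_2,\ub_n\rangle\geq 1$ and $\langle\mb_1,\ub_n\rangle\geq -1$, so $\langle\mb,\ub_n\rangle\geq 0$, and the point is that equality $\langle\mb,\ub_n\rangle=0$ would pin $\mb_2$ onto the facet $\langle\cdot,\ub_n\rangle=1$ of $\tau$ and $\mb_1$ onto $\langle\cdot,\ub_n\rangle=-1$ of $\tilde\tau$; combined with the analogous constraints at the remaining $\ub_l$ (using a generating argument as in Theorem~\ref{tracesegre}, reducing to generators of $\omega_R,\omega_R^{-1}$), one finds the minimal such $\mb$ would force an integral point on the $n$th ray of $\tau$, a contradiction. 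Therefore $\langle\mb,\ub_n\rangle\geq 1$ for all $\xb^\mb\in\tr(\omega_R)$, so $\tr(\omega_R)\subset(\,\xb^\mb : \langle\mb,\ub_n\rangle\geq 1\,)$, a non-$\mm$-primary prime-containing ideal (it is contained in the prime generated by monomials vanishing on the facet $\ub_n=0$, which has height $1$), so $R$ is not Gorenstein on the punctured spectrum.

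The main obstacle I anticipate is the converse direction: turning "$\langle\mb,\ub_n\rangle=0$ is impossible for elements of $\tr(\omega_R)$" into a clean argument. The subtlety is that $\omega_R$ and $\omega_R^{-1}$ have many generators, not just the cone points, so one cannot simply add cone points; I expect to need a truncation/generation argument (as in the proof of Theorem~\ref{tracesegre}) showing that in high degree every element of $\tr(\omega_R)$ along the facet is built from the cone-point direction, together with a careful lattice argument showing the obstruction to integrality of the cone point $\bb$ along the $n$th ray is exactly the obstruction to having $\langle\mb,\ub_n\rangle=0$ realized in $\tr(\omega_R)$. Formulating this as the numeric condition on $\bb$ and the $\ab_j$ is precisely what the subsequent corollaries (Corollary~\ref{first condition}, Corollary~\ref{special case}) are for, so here I would keep the argument at the level of the cone geometry.
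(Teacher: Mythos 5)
Your first half (the height bound) is essentially the paper's argument, though stated loosely: the intermediate claim that $\xb^{-\bb-s_k\ab_{j_k}}\in\omega_R^{-1}$ is false as written, since $\langle -\bb-s_k\ab_{j_k},\ub_{j_k}\rangle=-1-s_k\langle\ab_{j_k},\ub_{j_k}\rangle<-1$; the correct element (which the paper constructs explicitly) is $\qb_k=s\ab_{j_k}-\pb_k$ for an integer $s$ at least the parameter of the integral point $\pb_k$ on the ray of $\tau$, which is integral, lies on the ray $-\bb+t\ab_{j_k}$ of $\tilde\tau$, and gives $\xb^{s\ab_{j_k}}=\xb^{\pb_k}\xb^{\qb_k}\in\tr(\omega_R)$; your "suitable integral point near the ray" gestures at this but never produces it. The dimension count via Lemma~\ref{lem:Bruns} is fine and matches the paper.

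The genuine gap is the converse direction, exactly where you anticipated trouble, and the statement you are trying to establish there is not merely hard but false. You want to show that if the $n$th extremal ray of $\tau$ has no lattice point, then every monomial $\xb^{\mb}\in\tr(\omega_R)$ satisfies $\langle\mb,\ub_n\rangle\geq 1$, i.e.\ $\tr(\omega_R)$ is contained in the height-one facet prime $P=(\xb^{\mb}\::\:\langle\mb,\ub_n\rangle\geq 1)$. This can never happen: $R$ is normal, hence regular (in particular Gorenstein) in codimension one, so by Lemma~2.1 of \cite{HHSTraceofthecanonical} $\tr(\omega_R)$ is contained in no height-one prime. Concretely, imposing only the single condition $\langle\mb,\ub_n\rangle=0$ pins $\mb_1$ and $\mb_2$ onto the facets $\langle\cdot,\ub_n\rangle=1$ of $\tau$ and $\langle\cdot,\ub_n\rangle=-1$ of $\tilde\tau$, which are $(n-1)$-dimensional and generally full of lattice points even when the extremal ray of $\tau$ has none, so no contradiction can be extracted. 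The paper's fix is to test membership only on monomials $\xb^{\vb}$ with $\vb$ on the extremal ray of $\sigma$ through $\ab_i$: there $\langle\vb,\ub_l\rangle=0$ for \emph{all} $l\neq i$ simultaneously, so any decomposition $\vb=\vb_1+\vb_2$ with $\xb^{\vb_1}\in\omega_R$, $\xb^{\vb_2}\in\omega_R^{-1}$ forces equality in every one of those $n-1$ constraints, i.e.\ $\vb_1$ lies on the extremal ray $\bb+t\ab_i$ of $\tau$, which has no integral point; hence $\xb^{\vb}\notin\tr(\omega_R)$ for every lattice point $\vb$ on that ray of $\sigma$. Since $\xb^{\ell\vb}\in\mm^{\ell}$ for the first lattice point $\vb$ on the ray, no power of $\mm$ lies in $\tr(\omega_R)$, so $R$ is not Gorenstein on the punctured spectrum. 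No truncation or generation argument in the style of Theorem~\ref{tracesegre} is needed once one works along the extremal ray rather than along a single facet.
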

\begin{proof}
First we note that $R$ is Gorenstein on the punctured spectrum if and only if $\tr(\omega_R) \supset \m^\ell$ for some $\ell \geq 0$ by Lemma 2.1 of \cite{HHSTraceofthecanonical}.  Note that we can write points along the extremal ray of $\tau$ in the direction of $\ab_i$ as $\bb + \ab_it$ for $t \geq 0$.
 Without loss of generality, we may assume that the first $r$ rays of $\tau$ contain some integral point $\pb_i = \bb + \ab_i t_i$ for some $t_i$.
In general, $t_i$ is not an integer, but we can choose an integer  $s_i \geq t_i$  and let
\[\qb_i:= s_i\ab_i-\pb_i =(s_i-t_i)\ab_i-\bb,\]
which has integral coordinates since $s_i\ab_i$ and $\pb_i$ both have integral coordinates.
Then
\[\pb_i + \qb_i=s_i\ab_i.\]
We will show that $\qb_i + \cb \in \sigma$ for all points $\cb \in \tau \cap M$.  Let $\ub_1, \dots , \ub_n \in N$ be the inner normal vectors of $\sigma$ (again, assume $\gcd(\ub_i) = 1$).
Then  $\cb \in \tau$, if and only if $\left<\cb-\bb,\ub_j\right> \geq 0$ for all $j$, so suppose this holds.  We will show that $\left<\qb_i+\cb,\ub_j\right> \geq 0$ for all $j$ so that $\qb_i + \cb \in \sigma$.  Note that:
\begin{align*}
\qb_i+\cb &=(s_i-t_i)\ab_i+(\cb-\bb) \\
{\implies} \left< \qb_i + \cb, \ub_j \right> &= \left< (s_i-t_i)\ab_i, \ub_j \right>  + \left< \cb-\bb, \ub_j\right>.
\end{align*}
Since both terms on the right hand side are positive, we have that $\left<\qb_i+\cb,\ub_j\right> \geq 0$ as desired.  Then in particular, $\xb^{\qb_i+\cb} \in R$ for every $\xb^\cb \in \omega_R$, so that $\xb^{\qb_i} \in \omega_R^{-1}$.  Then since $\xb^{\pb_i} \in \omega_R$, we have $\xb^{\pb_i+\qb_i} =\xb^{s_i\ab_i} \in \tr(\omega_R)$.  In particular, for each $i=1,\ldots,r$, there is some point $s_i\ab_i$ with integral coordinates along the ray $\bb + \ab_i t_i$ such that $\xb^{s_i\ab_i} \in \tr(\omega_R)$. Therefore,
\[
\dim R/\tr(\omega_R)=\dim R/\sqrt{\tr(\omega_R)}=
\dim R/\big(\xb^{\ab_1},\ldots, \xb^{\ab_r}, \sqrt{\tr(\omega_R)}\big)\leq  \dim R/(\xb^{\ab_1},\ldots, \xb^{\ab_r}).
\]

By Lemma~\ref{lem:Bruns} we have that $(\xb^{\ab_1},\ldots, \xb^{\ab_n})$ is an $\mm$-primary ideal of $R$ which implies that the image of the ideal  $(\xb^{\ab_{r+1}},\ldots, \xb^{\ab_n})$ is primary to the maximal ideal of  $R/(\xb^{\ab_1},\ldots, \xb^{\ab_r})$. Therefore, $\dim R/(
\xb^{\ab_1},\ldots, \xb^{\ab_r})\leq n-r$, and hence $\dim R/\tr(\omega_R)\leq n-r$, which implies that $\height(\tr(\omega_R))\geq r$.

In particular,  if on all extremal rays of $\tau$ there exists an integral point, then $\height(\tr(\omega_R))=n$, and $R$ is Gorenstein on the punctured spectrum.

\smallskip

On the other hand, note that if $\vb \in \sigma$ is on the extremal ray $t\ab_i$ for $t \geq 0$, and $\vb = \vb_1 + \vb_2$ for $\xb^{\vb_1} \in \omega_R$, $\xb^{\vb_2} \in \omega^{-1}_R$, then $\vb_1$ must be on the extremal ray $\bb + t\ab_i$ of $\tau$ (and similarly $\vb_2$ must be on the extremal ray $-\bb + t\ab_i$).  In particular, if this extremal ray on $\tau$ has no integral points, we have that $\xb^\vb \notin \tr(\omega_R)$ for any such $\vb$.  Since $\xb^{\ell\vb} \in \m^\ell$ for $\vb$ the first integral point along this extremal ray of $\sigma$,  we have $\tr(\omega_R) \not\supset \m^\ell$ for any integer $\ell$.  In particular, $R$ is not Gorenstein on the punctured spectrum.
\end{proof}

Again, Theorem~\ref{integralpointsonrays} gives us another way to check that Example~\ref{not a Segre product} is Gorenstein on the punctured spectrum.  Namely, we simply observe that both extremal rays of $\tau$ have integral points.

More specifically, we can check when we are in the situation above numerically.  In particular, the following result tells us when an extremal ray of $\tau$ has an integral point.

\begin{Proposition}
\label{integral}
Let $\bb=(b_1,\ldots,b_n)\in \QQ^n$ and $\ab=(a_1,\ldots,a_n)\in \ZZ^n$ a nonzero vector with $\gcd(\ab)=1$, and  let $I=\{i\:\; a_i \neq 0\}$. We may assume that $a_j\neq 0$. Furthermore, let  $c_i=\lceil b_i\rceil-b_i$ and  $e_i= a_ic_j-a_jc_i$ for $i\in I\setminus\{j\}$.  Then there is  an integral point on the   ray $\bb+t\ab$, $(t\geq 0)$  if and only if
\begin{enumerate}
\item[{\em (1)}] $b_i\in \ZZ$ for $i\not\in I$,
\item[{\em (2)}] the numbers $e_i$ with $i\in I \setminus \{j\}$ are integers, and
\item[{\em (3)}] there exists an integer $t_j>0$ such that  $e_i+a_it_j\equiv 0\mod a_j$ for  $i\in I \setminus \{j\}$.
\end{enumerate}
\end{Proposition}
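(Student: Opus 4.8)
The statement asks for a complete characterization of when the ray $\bb + t\ab$ (with $t \geq 0$) contains a point of $\ZZ^n$. The plan is to write an arbitrary point on the ray coordinate-by-coordinate and impose integrality. A point on the ray is $\bb + t\ab = (b_1 + t a_1, \ldots, b_n + t a_n)$. For coordinates $i \notin I$ we have $a_i = 0$, so the $i$-th coordinate is just $b_i$, which is integral for some (equivalently, every) $t \geq 0$ if and only if $b_i \in \ZZ$; this gives condition (1), and from now on we restrict attention to the coordinates $i \in I$.

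First I would fix the distinguished index $j \in I$ and ask: for which real $t \geq 0$ is $b_j + t a_j \in \ZZ$? Writing $c_j = \lceil b_j \rceil - b_j \in [0,1)$, the value $b_j + t a_j$ is an integer exactly when $t a_j \equiv c_j \pmod 1$, i.e. when $t = (k + c_j)/a_j$ for some integer $k$ (using $a_j \neq 0$); among these the ones with $t \geq 0$ correspond to $k$ large enough, and writing $t_j = k$ (or a shift thereof) recovers the integer parameter appearing in condition (3). Having pinned down the admissible $t$ from the $j$-th coordinate alone, the next step is to substitute such a $t$ into the $i$-th coordinate for each other $i \in I$ and ask when $b_i + t a_i \in \ZZ$ as well. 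The key algebraic manipulation is to eliminate $t$: from $t a_j + b_j \in \ZZ$ and $t a_i + b_i \in \ZZ$ one forms $a_i(t a_j + b_j) - a_j(t a_i + b_i) = a_i b_j - a_j b_i$, which must lie in $\ZZ$. Rewriting $b_i = \lceil b_i \rceil - c_i$ and $b_j = \lceil b_j \rceil - c_j$, this integrality is equivalent to $a_i c_j - a_j c_i = e_i \in \ZZ$, which is condition (2); and it is a \emph{necessary} condition independent of the choice of $t$.

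Granting (1) and (2), the final step is to show that the $i$-th coordinates can all be made integral \emph{simultaneously} with the $j$-th, which is exactly where condition (3) enters. With $t$ parametrized by the integer $t_j$ (so that $b_j + t a_j \in \ZZ$), I would compute $a_j(b_i + t a_i)$ and use the relation above: $a_j(b_i + t a_i) = a_i(b_j + t a_j) - e_i$ up to an integer, so $b_i + t a_i \in \ZZ$ iff $e_i + a_i t_j \equiv a_i(b_j + t a_j) \equiv 0 \pmod{a_j}$ after absorbing integer terms and using $\gcd$-type bookkeeping — this is precisely the congruence $e_i + a_i t_j \equiv 0 \pmod{a_j}$ of (3). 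Finally one checks that "there exists $t_j > 0$" can be arranged: since adding $a_j$ to $t_j$ shifts the actual parameter $t$ by $1$ (hence keeps all coordinates integral), any solution $t_j$ of the congruence system can be translated into the range $t_j > 0$, so positivity is automatic once a solution exists; conversely any integral point on the ray has $t \geq 0$, forcing the corresponding $t_j$ to be (eventually) positive.

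\textbf{Main obstacle.} The conceptually routine part is the elimination of $t$ to get the necessary conditions (1)–(2); the delicate point is the equivalence in the reverse direction — that (1), (2), (3) together \emph{suffice} — because one must track exactly how the integer $t_j$ governing the $j$-th coordinate interacts mod $a_j$ with every other coordinate at once, and verify that no additional constraint is hidden (in particular that the system of congruences in (3), one for each $i \in I \setminus \{j\}$, is the complete set of obstructions and that their simultaneous solvability is not automatically implied by (2)). Care is also needed with the sign/ceiling conventions ($c_i = \lceil b_i \rceil - b_i$ lies in $[0,1)$) and with the case $a_i < 0$, so that the reductions mod $a_j$ and the range "$t_j > 0$" are handled uniformly; I would phrase the congruences in terms of $|a_j|$ or note that replacing $\ab$ by $-\ab$ is harmless since the ray direction is what matters, not the sign of individual $a_i$.
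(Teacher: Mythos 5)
Your proposal is correct and takes essentially the same route as the paper: both impose integrality coordinate by coordinate, handle $i\notin I$ via condition (1), parametrize the admissible $t$ through the $j$-th coordinate by an integer $t_j$ (the paper writes $t=(1/a_i)(c_i+t_i)$ for every $i\in I$), eliminate $t$ to see that the $e_i$ must be integers, and reduce integrality of the remaining coordinates to the congruences $e_i+a_it_j\equiv 0 \pmod{a_j}$. The only differences are cosmetic — you keep the single parameter $t_j$ instead of the full system $e_i=a_jt_i-a_it_j$, and you are slightly more careful about the positivity/sign bookkeeping — though note that your intermediate chain $e_i+a_it_j\equiv a_i(b_j+ta_j)\equiv 0\pmod{a_j}$ is not literally correct as written, even if the final congruence you land on is the right one.
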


\begin{proof} Since $(\bb+t\ab)_i =b_i$ for $i\not\in I$, the ray $\bb+t\ab$, $(t\geq 0)$ can have an integral point on it only if (1) holds, and we have to only consider the components $a_i$ with $i \in I$. Thus in the following we may as well assume that  $I=[n].$ For simplicity we may further assume that $j=1$.

For $t\geq 0$,  we define  $t_i$ by the equation $t=(1/a_i)(c_i+t_i)$. Then $(\bb+t\ab)_i =\lceil b_i\rceil+t_i$. Thus the $i$th component of $\bb+t\ab$ is an integer if and only if $t_i$ is an integer. Therefore, $\bb+t\ab$ is an integer point  if and only if
\[
t=(1/a_1)(c_1+t_1)=\cdots = (1/a_d)(c_d+t_d) \quad \text{with integers}\quad t_1,\ldots,t_d.
\]
The equations
\[
(1/a_1)(c_1+t_1)= (1/a_i)(c_i+t_i)
\]
give us
\[
a_ic_1-a_1c_i=a_1t_i-a_it_1
\]
for $i=2,\ldots,d$. Thus if $\bb+t\ab$ is an integral point, the right hand terms $a_1t_i-a_it_1$ are integers, and so the left hand terms must be integers as well. This shows that if the ray  $\bb+t\ab$,  $t\geq 0$ has  an integral point, then  for $i=2,\ldots,d$ the numbers $a_ic_1-a_1c_i$ are all integers.

In fact, we have that the  ray $\bb+t\ab$, $t\geq 0$  has  an integral point if and only if (1) holds and there exists a  vector $(t_1,\ldots,t_d)$ with integral coordinates which  is a solution to the equations $e_i=a_1t_i-a_it_1$ $(i=2,\ldots,d)$, where $e_i= a_ic_1-a_1c_i$. Thus the ray has an integral point if and only if (1),  (2) and (3) hold.
\end{proof}

As an immediate consequence of Theorem~\ref{integralpointsonrays} and Proposition~\ref{integral} we obtain the following.

\begin{Corollary}
\label{first condition}
With the assumptions and notation of Theorem~\ref{integralpointsonrays}, the following conditions are equivalent:
\begin{enumerate}
\item[{\em (a)}]  $R$ is  Gorenstein on the punctured spectrum.
\item[{\em (b)}]  For  each $j$, the ray $\bb+t\ab_j$ ($t\geq 0$) satisfies the conditions (1), (2) and (3) of Proposition~\ref{integral}
\end{enumerate}
\end{Corollary}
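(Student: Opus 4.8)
The plan is to deduce this directly by combining Theorem~\ref{integralpointsonrays} with Proposition~\ref{integral}; the only genuine content beyond quoting those two results is identifying the extremal rays of the cone $\tau$. First I would record that, since $\sigma$ is simplicial, the matrix $U$ with rows $\ub_1,\dots,\ub_n$ is invertible, so the system $\langle\vb,\ub_i\rangle = 1$ ($i=1,\dots,n$) has a unique solution $\bb = U^{-1}(1,\dots,1)^{\top} \in \QQ^n$; this is the cone point of $\tau$ featured in Theorem~\ref{integralpointsonrays}, and in particular it has rational coordinates, which is what Proposition~\ref{integral} requires. Then I would observe that $\tau = \bb + \sigma$: for $\vb \in M_\bbR$ one has $\langle \bb+\vb, \ub_i\rangle = 1 + \langle\vb,\ub_i\rangle$, so $\bb+\vb \in \tau$ if and only if $\langle\vb,\ub_i\rangle \geq 0$ for all $i$, i.e. $\vb \in \sigma$; equivalently, $\vb \in \tau$ forces $\vb - \bb \in \sigma$. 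Consequently the extremal rays of $\tau$ are exactly the $\bb$-translates of the extremal rays of $\sigma$, namely the rays $\bb + t\ab_j$, $t \geq 0$, for $j = 1,\dots,n$.

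With this in hand the corollary is a two-step chain of equivalences. By Theorem~\ref{integralpointsonrays}, $R$ is Gorenstein on the punctured spectrum if and only if every extremal ray of $\tau$ contains an integral point; by the previous paragraph this is the same as saying that for each $j$ the ray $\bb + t\ab_j$ ($t \geq 0$) contains an integral point. Applying Proposition~\ref{integral} to the pair $(\bb,\ab_j)$ --- which is legitimate since $\bb \in \QQ^n$ and $\gcd(\ab_j)=1$ by the standing hypothesis of Theorem~\ref{integralpointsonrays} --- this ray contains an integral point precisely when conditions (1), (2), (3) of Proposition~\ref{integral} hold for it. Concatenating the two equivalences gives (a) $\iff$ (b).

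I do not anticipate a real obstacle here. The one point that deserves an explicit sentence is the identity $\tau = \bb + \sigma$ and the resulting description of the extremal rays of $\tau$; this is exactly where simpliciality of $\sigma$ is used in an essential way, since (as in the remark following Proposition~\ref{prop:cone}) for a non-simplicial cone $\tau$ need not be a cone at all and the cone point $\bb$ need not exist.
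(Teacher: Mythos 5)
Your argument is correct and is exactly the route the paper intends: the paper states this corollary as an immediate consequence of Theorem~\ref{integralpointsonrays} and Proposition~\ref{integral}, with the identification of the extremal rays of $\tau$ as the rays $\bb+t\ab_j$ already made explicit in the proof of Theorem~\ref{integralpointsonrays}. Your extra paragraph verifying $\tau=\bb+\sigma$ (using simpliciality) and that $\bb\in\QQ^n$ is a harmless, correct elaboration of what the paper leaves implicit.
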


Under additional assumptions on the ray, Proposition~\ref{integral} can be improved as follows.

\begin{Corollary}
\label{special case}
With the  assumptions and notation of Proposition~\ref{integral}, assume that there exist a nonzero component $a_i$ of $\ab$ such that $a_j$ is invertible module $a_i$ for
$j\neq i$. We set $e_{ij}=a_ic_j-a_jc_i$ for $1\leq i,j\leq n$. Then there exists an integral point on the ray $\bb+t\ab$ with  $t\geq 0$, if and only if $e_{ij}$ is an integer for all $i<j$.
\end{Corollary}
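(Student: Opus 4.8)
The plan is to deduce this corollary from Proposition~\ref{integral} by showing that, under the extra hypothesis, conditions (2) and (3) collapse into the single requirement that all the $e_{ij}$ are integers. First I would fix notation: we work with the ray $\bb + t\ab$, we may discard the components outside $I = \{i : a_i \neq 0\}$ (so condition (1) is forced and I assume $I = [n]$), and we may relabel so that the distinguished invertible coordinate is $a_i$ with $i$ as in the hypothesis. Note that $e_{ij} = a_i c_j - a_j c_i$ is antisymmetric in $i,j$, so ``$e_{ij}\in\ZZ$ for all $i<j$'' is equivalent to ``$e_{ij}\in\ZZ$ for all $i\neq j$''; in particular it is equivalent to saying $e_{ij}\in\ZZ$ for the fixed $i$ and all $j\neq i$, which is precisely condition (2) of Proposition~\ref{integral} applied with the role of the index ``$j$'' there played by our $i$.

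The substantive step is to show that, given that all $e_{ij}$ are integers, condition (3) is automatically satisfied when $a_j$ is invertible modulo $a_i$ for all $j\neq i$. In the language of Proposition~\ref{integral} (with the distinguished index taken to be $i$), condition (3) asks for an integer $t_i > 0$ with $e_{ij} + a_j t_i \equiv 0 \pmod{a_i}$ for all $j \neq i$. Since each $a_j$ is a unit mod $a_i$, each congruence individually has the unique solution $t_i \equiv -e_{ij} a_j^{-1} \pmod{a_i}$. The key observation is that these solutions agree for all $j$: indeed, from the antisymmetry relation one checks $a_j e_{ik} - a_k e_{ij} = a_i e_{jk} \equiv 0 \pmod{a_i}$ for $j,k\neq i$ (this is just the Plücker-type identity $a_i(a_j c_k - a_k c_j) = a_j(a_i c_k - a_k c_i) - a_k(a_i c_j - a_j c_i)$, i.e. $a_i e_{jk} = a_j e_{ik} - a_k e_{ij}$), so $e_{ik} a_k^{-1} \equiv e_{ij} a_j^{-1} \pmod{a_i}$ for all $j, k \neq i$. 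Hence there is a single residue class mod $a_i$ solving all the congruences simultaneously, and we may pick a representative $t_i > 0$ (if $a_i \neq \pm 1$ this is immediate; if $|a_i| = 1$ every congruence is vacuous and any $t_i>0$ works). Thus (3) holds, and by Proposition~\ref{integral} the ray has an integral point.

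For the converse, if there is an integral point on the ray then Proposition~\ref{integral}(2) already gives that $e_{ij}\in\ZZ$ for $j\neq i$, and by antisymmetry $e_{ij}\in\ZZ$ for all $i<j$. Assembling the two directions gives the stated equivalence. I expect the only genuine obstacle to be the bookkeeping around which index plays the role of the distinguished index ``$j$'' in Proposition~\ref{integral}: the hypothesis privileges a coordinate $a_i$, and one must apply Proposition~\ref{integral} with that coordinate as the pivot rather than an arbitrary one — once that matching is made, the Plücker identity $a_i e_{jk} = a_j e_{ik} - a_k e_{ij}$ does all the work, reducing the system of congruences in (3) to a single consistent congruence. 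A minor edge case worth a sentence is $|a_i| = 1$, where the modular conditions are vacuous and the claim is trivial.
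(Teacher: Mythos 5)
Your ``substantive step'' (the implication: all $e_{ij}\in\ZZ$ $\Rightarrow$ integral point) is correct and is essentially the paper's own argument: condition (2) of Proposition~\ref{integral} is immediate, and the system of congruences in (3) is consistent because $a_ie_{jk}=a_je_{ik}-a_ke_{ij}$ forces $e_{ij}a_j^{-1}\equiv e_{ik}a_k^{-1} \pmod{a_i}$; the paper does exactly this, only phrased by exhibiting the common solution $t_1=-e_2b_2$ (with $b_2$ an inverse of $a_2$ modulo $a_1$) and checking $a_2e_i-a_ie_2=-a_1e_{i2}$. The sign slip in your reading of condition (3) ($e_{ij}$ versus $e_{ji}$) is harmless, and your remark about choosing a positive representative of the residue class is fine.

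The genuine gap is in the forward direction, and it stems from the first paragraph: you claim that, by antisymmetry, ``$e_{ij}\in\ZZ$ for all $i<j$'' is equivalent to ``$e_{ij}\in\ZZ$ for the fixed pivot $i$ and all $j\neq i$.'' Antisymmetry only exchanges $e_{ij}$ and $e_{ji}$; it says nothing about the pairs $e_{jk}$ with $j,k\neq i$, and the claimed equivalence is false. For example, take $\ab=(2,1,1)$, $\bb=(0,1/2,0)$ and pivot $i=1$ (so $a_2,a_3$ are units mod $a_1$): then $c=(0,1/2,0)$, and $e_{12}=1$, $e_{13}=0$ are integers while $e_{23}=-1/2$ is not (and indeed the ray has no integral point, so the pivot-only condition is genuinely weaker than the corollary's condition). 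Consequently your forward argument --- ``Proposition~\ref{integral}(2) gives $e_{ij}\in\ZZ$ for $j\neq i$, and by antisymmetry $e_{ij}\in\ZZ$ for all $i<j$'' --- only yields integrality of the pairs involving the pivot, not of all pairs, which is what must be proved (and what your own consistency computation in the converse direction consumes); note also that the identity $a_ie_{jk}=a_je_{ik}-a_ke_{ij}$ alone only gives $e_{jk}\in\frac{1}{a_i}\ZZ$, so it cannot close the gap. The paper closes it by returning to the parametrization in the proof of Proposition~\ref{integral}: if $\bb+t\ab$ is an integral point, then every $t_k$ defined by $t=(1/a_k)(c_k+t_k)$ is an integer and $e_{jk}=a_kt_j-a_jt_k\in\ZZ$ for \emph{all} pairs. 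Alternatively, you could apply Proposition~\ref{integral} with each index of $I$ in turn as the distinguished index (its hypothesis only requires that coordinate to be nonzero), and handle indices outside $I$ via condition (1). Either repair is short, but as written the forward implication is not established.
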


\begin{proof} Since $e_{ij}=a_ic_j-a_jc_i=a_jt_i-a_it_j$, the argument as before shows that the numbers $e_{ij}$ are integers if  $\bb+t\ab$  is an integral point with $t= (1/a_j)(c_j+t_j)$ for $j=1,\ldots,n$.

Conversely, assume that the $e_{ij}$ are integers. We may assume that $a_j$ is invertible  modulo $a_1$ for all $j\geq 2$ and that condition (1) in Proposition~\ref{integral} is satisfied. Since $e_i=-e_{i1}$, condition (2) in Proposition~\ref{integral} is also satisfied. It remains to prove that there exists an integer $t_1$  such that $e_i+a_it_1\equiv 0\mod a_1$.  Let $b_ia_i\equiv 1\mod a_1$ for $i=2,\dots, n$. We let $t_1=-e_2b_2 =e_{21}b_2$.  Then $e_2+a_2t_1\equiv 0\mod a_1$. We claim that we also have  $e_i+a_it_1\equiv 0\mod a_1$ for $i>2$, which is equivalent to saying that $-e_2b_2\equiv -e_ib_i\mod a_1$. This in turn is equivalent  $a_2e_i\equiv a_ie_2\mod a_1$. Indeed, we have
\[
a_2e_i-a_ie_2= a_2(a_ic_1-a_1c_i) -a_i(a_2c_1-a_1c_2)=a_1(a_ic_2-a_1c_j)=-a_1e_{i2}.
\]
Since by assumption $e_{2i}$ is an integer, the desired conclusion follows.
\end{proof}

\begin{Remark}{\em
Consider primitive integral vectors $\ab_1,\ldots,\ab_n$, and let $\sigma$ be the cone whose extremal rays are defined by these vectors. We let $A$ be the matrix whose rows are $\ab_1,\ldots,\ab_n$. We may assume that these vectors are labeled such that $|A|>0$. Let $\bb'_1,\ldots,\bb'_n$ be the column vectors of $|A|A^{-1}$, and let $B$ be the matrix whose row vectors are $\bb_i=\bb'_i/\gcd(\bb'_i)$ for $i=1,\ldots,n$. 
Then the vectors $\bb_i$ are inner normal vectors of $\sigma$, and
$B^{-1}\cdot (1, \dots, 1)^\top$  is the cone point of $\tau$.
}
\end{Remark}

\begin{Example}{\em
Let $\ab_1=(3,1,1)$, $\ab_2=(1,3,1)$ and $\ab_3=(1,1,3)$, and $A$ be the matrix with row vectors $\ab_1,\ab_2,\ab_3$.  Then $|A|=20$ and
\[
|A|A^{-1}= \begin{pmatrix}
8 & -2 & -2 \\
-2 & 8 & -2 \\
-2 & -2& 8
\end{pmatrix}.
\]
Therefore, the vectors $(4,-1,-1), (-1,4, -1), (-1,-1-4)$ are the inner normal vectors of $\sigma$ and the row vectors of $B$. Then
\[
B^{-1}=
\begin{pmatrix}
3/10 & 1/10 & 1/10\\
1/10 & 3/10 & 1/10\\
1/10 & 1/10 & 3/10
\end{pmatrix},
\]
so that $B^{-1}\cdot (1, \dots, 1)^\top=(1/2,1/2,1/2)^\top$, which is the cone point of $\tau$.

Since $(1/2,1/2,1/2)+1/2(3,1,1)=(2,1,1)$, we see that the extremal ray has an integral point. The same holds true for the other extremal rays of $\tau$. Of course this could have also been checked by  applying Corollary~\ref{special case}.

Now Proposition~\ref{prop:cone} and Theorem~\ref{integralpointsonrays} imply   that $K[S_\sigma]$ is not Gorenstein, but Gorenstein on the punctured spectrum. }
\end{Example}

 In the following proposition, we provide a necessary condition for having integral points on the extremal rays of simplicial cones. This is a weaker result, which nonetheless has the benefit that we can check the condition only using the vectors $\ub_i$.
\begin{Proposition}
\label{thm:integral points}
Let $R= K[S_\sigma]$, where $\sigma$ is the simplicial cone $\sigma =\{\vb \in M_\bbR\:\;  \left<\vb,\ub_i\right> \geq 0, i = 1, \dots n\}$
for some primitive integral $\ub_i \in N$.  Let $U$ be the $n\times n$ matrix whose $i^{\rm th}$ row is $\ub_i$ for $i=1,\ldots,n$. We will denote by $\mathcal{I}_{k,n}$ the set of subsets of $\{1,\dots, n\}$ of size $k$.  For $I,J \in \mathcal{I}_{k,n}$, let $U_{I,J}$ be the submatrix of $U$ consisting of those entries of $U$ with row indices in $I$ and with column indices in $J$.  Then the rays on the boundary of the cone defining $\omega_R$ have integral points only if for every $I,J, L \in \mathcal{I}_{n-1,n}, J \neq L$, we have
\begin{align*}
\gcd(|U_{I,J}|, |U_{I,L}|) &\;  \Big| \; \sum_{k = 1}^{n-1} (-1)^{\ell+k} |U_{I -\{i_k\}, J- \{j_\ell\}}|
\end{align*}
where $L = [n] \backslash \{\ell\}$, $I = \{i_1, \dots,  i_{n-1}\}$ and $J = \{j_1, \dots, j_{n-1}\}$ (with $i_k < i_{k +1}$ and $j_k < j_{k +1}$ for all $k$).
In particular, $R$ is not Gorenstein on the punctured spectrum if any of the conditions above fail.
\end{Proposition}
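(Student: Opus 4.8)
The plan is to reduce, via Theorem~\ref{integralpointsonrays}, to showing that every extremal ray of $\tau$ carries a lattice point, to encode this using the arithmetic of Proposition~\ref{integral}, and then to translate it into minors of $U$ by means of Jacobi's identity for minors of the adjugate matrix.

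Put $\Delta=|U|$ (we may take $\Delta>0$). Since $\sigma$ is simplicial, its extremal rays are spanned by the columns of $\operatorname{adj}(U)=\Delta U^{-1}$: the $p$-th column has $i$-th entry $(-1)^{i+p}|U_{[n]\setminus\{p\},[n]\setminus\{i\}}|$, and the primitive generator $\ab_p$ is this column divided by the gcd $g_p$ of its entries. The cone point of $\tau$ is $\bb=U^{-1}\mathbf 1=\tfrac1\Delta\operatorname{adj}(U)\mathbf 1$, and the extremal rays of $\tau$ are the $\bb+t\ab_p$, $t\ge 0$. By Theorem~\ref{integralpointsonrays} it suffices to prove: if $\bb+t\ab_p$ contains a lattice point $\qb$, then for all $i,i'$ the integer
$D:=\gcd\big((\operatorname{adj}U)_{ip},(\operatorname{adj}U)_{i'p}\big)=\gcd\big(|U_{I,[n]\setminus\{i\}}|,|U_{I,[n]\setminus\{i'\}}|\big)$, with $I:=[n]\setminus\{p\}$, divides $(\operatorname{adj}U)_{i'p}b_i-(\operatorname{adj}U)_{ip}b_{i'}$. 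For this, write $\bb=\qb-t\ab_p$ and set $\alpha=(\ab_p)_i/d$, $\alpha'=(\ab_p)_{i'}/d$ with $d=\gcd((\ab_p)_i,(\ab_p)_{i'})$ (so that $\alpha=(\operatorname{adj}U)_{ip}/D$ and $\alpha'=(\operatorname{adj}U)_{i'p}/D$); then
\[
\tfrac1D\big((\operatorname{adj}U)_{i'p}b_i-(\operatorname{adj}U)_{ip}b_{i'}\big)=\alpha'b_i-\alpha b_{i'}=\alpha'q_i-\alpha q_{i'}-t\big(\alpha'(\ab_p)_i-\alpha(\ab_p)_{i'}\big)=\alpha'q_i-\alpha q_{i'}\in\bbZ,
\]
because $\alpha'(\ab_p)_i=\alpha(\ab_p)_{i'}$. (This is condition~(2) of Proposition~\ref{integral}, sharpened by clearing the common factor $d$ using the explicit lattice point $\qb$.)

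It then remains to express the integer $(\operatorname{adj}U)_{i'p}b_i-(\operatorname{adj}U)_{ip}b_{i'}$ through minors of $U$ alone. Substituting $\bb=\tfrac1\Delta\operatorname{adj}(U)\mathbf 1$ gives
\[
(\operatorname{adj}U)_{i'p}b_i-(\operatorname{adj}U)_{ip}b_{i'}=\tfrac1\Delta\sum_k\big[(\operatorname{adj}U)_{i'p}(\operatorname{adj}U)_{ik}-(\operatorname{adj}U)_{ip}(\operatorname{adj}U)_{i'k}\big],
\]
and each summand is, up to sign, the $2\times2$ minor of $\operatorname{adj}(U)$ on rows $\{i,i'\}$ and columns $\{p,k\}$, which by Jacobi's identity equals $\pm\Delta\cdot|U_{[n]\setminus\{p,k\},[n]\setminus\{i,i'\}}|$. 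The factor $\Delta$ cancels, leaving an alternating sum $\sum_k(\pm1)\,|U_{[n]\setminus\{p,k\},[n]\setminus\{i,i'\}}|$ of $(n-2)\times(n-2)$ minors of $U$, with a sign pattern of the shape $(-1)^{\ell+k}$. With $I=[n]\setminus\{p\}$ and with $J,L\in\mathcal I_{n-1,n}$ chosen so that their complements record the pair $\{i,i'\}$, the divisibility established above becomes exactly $\gcd(|U_{I,J}|,|U_{I,L}|)\mid\sum_k(-1)^{\ell+k}|U_{I\setminus\{i_k\},J\setminus\{j_\ell\}}|$, since $(\operatorname{adj}U)_{ip}=\pm|U_{I,[n]\setminus\{i\}}|$ is, up to sign, one of the two minors in the gcd. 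Taking the contrapositive and invoking Theorem~\ref{integralpointsonrays} gives the last assertion: $R$ is not Gorenstein on the punctured spectrum as soon as one of these divisibilities fails.

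The step I expect to require the most care is the bookkeeping of the two matchings: assembling the signs coming out of Jacobi's identity into precisely the pattern $(-1)^{\ell+k}$ of the statement, and identifying the index triple $(I,J,L)$ (and the auxiliary index $\ell$) with the ray index $p$ and the coordinate pair $\{i,i'\}$. The determinantal identity itself and the lattice-point computation are routine.
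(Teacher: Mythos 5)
Your proposal is correct, and it reaches the statement by a genuinely different route from the paper's. The paper never introduces the cone point or the primitive ray directions: it writes the extremal ray of $\tau$ indexed by $I$ as the solution set of $U_{I,[n]}\xb=(1,\dots,1)^\top$, multiplies this system by the cofactor matrix $B$ of $U_{I,J}^\top$, and reads off from the $\ell$-th row a two-variable integral equation $|U_{I,J}|x_{j_\ell}\pm|U_{I,L}|x_j=\sum_{k}(-1)^{\ell+k}|U_{I\setminus\{i_k\},J\setminus\{j_\ell\}}|$, so the divisibility is just the solvability obstruction for a linear Diophantine equation in two unknowns, and no further determinantal identity is needed because the cofactor expansion lands on the $(n-2)\times(n-2)$ minors directly. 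You instead parametrize the ray as $\bb+t\ab_p$ with $\bb=U^{-1}(1,\dots,1)^\top$ and $\ab_p$ proportional to the $p$-th column of $\operatorname{adj}(U)$ (in the spirit of Proposition~\ref{integral} and Theorem~\ref{integralpointsonrays}), deduce from an explicit lattice point that $\gcd\big((\operatorname{adj}U)_{ip},(\operatorname{adj}U)_{i'p}\big)$ divides the cross expression $(\operatorname{adj}U)_{i'p}b_i-(\operatorname{adj}U)_{ip}b_{i'}$, and then need Jacobi's complementary-minor identity to convert $2\times2$ minors of $\operatorname{adj}(U)$ into minors of $U$; this makes transparent that the obstruction is the integrality constraint along the explicit ray through the cone point, at the cost of an extra identity and more sign bookkeeping, whereas the paper's argument is shorter and self-contained. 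The bookkeeping you flagged does close: the Jacobi sign $(-1)^{i+i'+p+k}$ is corrected by a column swap exactly when $k<p$, while in the statement the exponent is the position of $i_k$ inside $I=[n]\setminus\{p\}$, which shifts by one exactly when $i_k>p$; the two discrepancies cancel, so your summands carry the pattern $(-1)^{\ell+k}$ up to one overall sign, which is harmless for divisibility. Two small points to add for completeness: the term $k=p$ in your sum vanishes, so the sum indeed runs over the $n-1$ elements of $I$ as in the statement; and when both adjugate entries vanish the gcd is zero but so is the cross expression, a degenerate case the paper likewise sets aside before inverting $U_{I,J}$. Finally, as in the paper, Theorem~\ref{integralpointsonrays} is only needed for the closing assertion about the punctured spectrum, not for the divisibility itself.
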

\begin{proof}
By Proposition~\ref{descriptionoftautilde}, $\xb^\mb \in \omega_R$ if and only if $\left<\mb, \ub_i\right> \geq 1$ for all $i \in [n]$.  In particular, the extremal rays of this cone are given by $\{\vb \:\;  \left<\vb, \ub_i\right> =1, \text{ for } i \in I \text{ and } I\in \mathcal{I}_{n-1,n}\}$.
Consider $J,I \in \mathcal{I}_{n-1,n}$.  We note that if both $|U_{I,J}|$ and $|U_{I,L}| = 0$, then the condition above will hold trivially.  Further, since the vectors $\ub_i$ give a cone, we have that $|U_{I,J}| \neq 0$ for some choices of $I$,$J$, so we will assume $|U_{I,J}| \neq 0$.  Write $I = \{i_1, \dots i_{n-1}\}$ and $J = \{j_1, \dots j_{n-1}\}$ (with $i_k < i_{k +1}$ and $j_k < j_{k +1}$).  Then $U_{I,J}$ is invertible, with inverse $(1/|U_{I,J}|)B$, where $B$ is the cofactor matrix of $U_{I,J}^\top$, namely:
$$
B_{\ell,k} := (-1)^{\ell+k} |U_{I \backslash\{i_k\},J \backslash\{j_\ell\}}|
$$
Say $J = [n] \backslash \{j\}$.  We know that $\xb = (x_1, \dots, x_n)$ is on the extremal ray $\{\vb \:\; \left<\ub_i,\vb\right> = 1, \text{ for } i \in I\}$ if and only if:
\begin{equation*}
U_{I,[n]} \cdot \xb = (1, \dots, 1)^\top
\end{equation*}
i.e. if and only if
\begin{equation*}
B \cdot U_{I,[n]} \cdot \xb = B \cdot (1, \dots, 1)^\top
\end{equation*}
In particular, from the $\ell^{\rm th}$ row of the above, we get:
\begin{align*}
|U_{I,J}|x_\ell \pm |U_{I,L}| x_j &= \sum_{k = 1}^{n-1} B_{\ell,k} \\
\text{ or } |U_{I,J}|x_\ell  \pm |U_{I,L}| x_j &= \sum_{k = 1}^{n-1} (-1)^{\ell+k} |U_{I \backslash\{i_k\}, J\backslash\{j_\ell\}}|
\end{align*}
where $L = [n] \backslash \{\ell\}$, and the sign depends on $\ell,j$.  Thus, our condition above must hold in order to have integral solutions $x_\ell,x_j$ to this equation, and so the condition above must hold in order for $R$ to be Gorenstein on the punctured spectrum by Proposition~\ref{thm:integral points}.
\end{proof}

In the 3-dimensional case, this simplifies to the following:
\begin{Corollary}\label{3dimcase}
$R = K[S_{\sigma}]$ is not Gorenstein on the punctured spectrum if for any $I,J,L \in \mathcal{I}_{2,3}$ with $J \neq L$ and $\{\ell\} = J \cap L$, we have that
\begin{equation*}
\gcd(|U_{I,J}|, |U_{I,L}|) \not\Big|   \left|\sum_{i \in I} (-1)^{i + \ell} u_{i,\ell}\right|.
\end{equation*}
\end{Corollary}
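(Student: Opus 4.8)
The plan is to obtain Corollary~\ref{3dimcase} by specializing Proposition~\ref{thm:integral points} to $n=3$ and unwinding the resulting combinatorial data. When $n=3$ we have $\mathcal{I}_{n-1,n}=\mathcal{I}_{2,3}$, so the index sets $I,J,L$ in Proposition~\ref{thm:integral points} are precisely the three $2$-element subsets of $[3]$. The matrices $U_{I,J}$ are then $2\times 2$, so the determinants $|U_{I,J}|$ and $|U_{I,L}|$ on the left-hand side of the divisibility relation are genuine $2\times 2$ minors of $U$ and need no rewriting. The minors $|U_{I-\{i_k\},\,J-\{j_\ell\}}|$ occurring inside the sum on the right-hand side, however, are now $(n-2)\times(n-2)=1\times 1$ determinants, hence single entries of $U$.

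First I would pin down which entry survives. Fix $I\in\mathcal{I}_{2,3}$ and two distinct column sets $J,L\in\mathcal{I}_{2,3}$; since $J\neq L$, their intersection $J\cap L$ consists of a single index, which we call $\ell$. In the proof of Proposition~\ref{thm:integral points}, the minor $|U_{I-\{i_k\},\,J-\{j_\ell\}}|$ paired with $|U_{I,J}|$ and $|U_{I,L}|$ has as its column index set $J$ with the column \emph{not} lying in $L$ deleted; that deleted column is the single element of $J\setminus L$, so what remains is $J\cap L=\{\ell\}$, and therefore $|U_{I-\{i_k\},\,J-\{j_\ell\}}|=u_{i,\ell}$ where $\{i\}=I\setminus\{i_k\}$. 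Summing over $k\in\{1,2\}$, equivalently over $i\in I$, and carrying along the cofactor signs from the matrix $B$ in that proof, the right-hand side of the divisibility relation collapses to $\pm\sum_{i\in I}(-1)^{i+\ell}u_{i,\ell}$, whose absolute value is $\bigl|\sum_{i\in I}(-1)^{i+\ell}u_{i,\ell}\bigr|$.

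With the right-hand side in this form, the corollary is simply the contrapositive of Proposition~\ref{thm:integral points}: that proposition asserts that if every ray on the boundary of the cone defining $\omega_R$ carries an integral point --- which by Theorem~\ref{integralpointsonrays} is exactly the statement that $R$ is Gorenstein on the punctured spectrum --- then all of the divisibility relations hold; hence if even one of them fails, $R$ cannot be Gorenstein on the punctured spectrum. The only genuinely delicate step, and the one I expect to be the main obstacle, is the sign-and-index bookkeeping in the middle paragraph: tracking which of the two $2\times 2$ minors is $|U_{I,J}|$ and which is $|U_{I,L}|$, verifying that the column deleted in $J-\{j_\ell\}$ really is the one complementary to $J\cap L$, and checking that the cofactor signs $(-1)^{\ell+k}$ in the definition of $B$ collapse to the sign $(-1)^{i+\ell}$ recorded here (including the harmless case where both $2\times 2$ minors vanish, when the divisibility holds vacuously). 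Once that is verified, the result is a direct substitution $n=3$ into Proposition~\ref{thm:integral points}.
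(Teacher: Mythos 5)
Your route is the same as the paper's: the paper offers no separate argument for Corollary~\ref{3dimcase}, presenting it as the direct specialization $n=3$ of Proposition~\ref{thm:integral points}, read contrapositively via Theorem~\ref{integralpointsonrays}, exactly as you describe. Your identification of the surviving $1\times 1$ minor is also correct: the deleted column is the element of $J\setminus L$, so the remaining column index is the element $\ell$ of $J\cap L$.

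The one step you defer --- checking that the cofactor signs $(-1)^{\ell+k}$ of $B$ ``collapse to $(-1)^{i+\ell}$'' --- is precisely where the computation does not come out as you predict. In the proof of Proposition~\ref{thm:integral points} the signs are positional: writing $I=\{i_1,i_2\}$ with $i_1<i_2$ and letting $r$ be the position in $J$ of the deleted column, the relevant right-hand side is $(-1)^{r+1}u_{i_2,\ell}+(-1)^{r+2}u_{i_1,\ell}=\pm\bigl(u_{i_1,\ell}-u_{i_2,\ell}\bigr)$, so the two entries always enter with opposite signs. The printed convention $(-1)^{i+\ell}$, with $i$ an actual row index, agrees with this only when $i_1,i_2$ have opposite parity ($I=\{1,2\}$ or $I=\{2,3\}$); for $I=\{1,3\}$ it yields $\pm\bigl(u_{i_1,\ell}+u_{i_2,\ell}\bigr)$ instead. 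The paper's own Example~\ref{exam:Janet} (with $I=\{1,3\}$, $\ell=2$) computes the difference $-u_{1,2}+u_{3,2}$, consistent with the positional signs rather than with the formula as displayed. So carrying out the bookkeeping will not confirm the stated sign rule; it produces $\bigl|u_{i_1,\ell}-u_{i_2,\ell}\bigr|$ on the right-hand side, which is how the corollary should be read and is what makes the contrapositive of Proposition~\ref{thm:integral points} apply. Apart from this wrinkle --- which is inherited from the statement itself, not introduced by you --- your proposal is exactly the paper's argument.
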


\begin{Example}\label{exam:Janet}{\em
Let $R$ be the toric ring given by the cone $\sigma= \{ \vb\:\;  \left<\vb,\ub_i\right> \geq 0, i=1,2,3\}$, where  $\ub_1 = (1,-2,2), \ub_2 =(-2,1,0)$ and $\ub_3 = (3,-1,-4)$. Note that the rays of the cone $\sigma$ go through $(4, 8, 1), (2, 2, 1)$, and  $(2, 4, 3)$.  Then $\tr(\omega_R)$ cannot contain any power of the maximal ideal.
By Corollary \ref{3dimcase}, it suffices to check that letting $I = \{1,3\}$, $J = \{1,2\}$, $L= \{2,3\}$, and $\{\ell\} = \{2\} = J \cap L$, we have that
\begin{equation*}
\gcd(|U_{I,J}|, |U_{I,L}|) \not\Big|   \left|\sum_{i \in I} (-1)^{i + \ell} u_{i,\ell}\right|.
\end{equation*}
\vspace{-.2cm}
Namely, note that:
\begin{equation*}
{\rm gcd}(|U_{\{1,3\},\{1,2\}}|, |U_{\{1,3\},\{2,3\}}|) = {\rm gcd}(-5, 10) = 5 \not\Big|   \left| - u_{1,2} + u_{3,2} \right| = -1.
\end{equation*}
More specifically, note that $\xb=(x_1,x_2,x_3)$ is on the ray given by $\ub_i$ for $i \in I$ if and only if:
\begin{equation*}
\begin{pmatrix}
3 & -1 & -4 \\
1 & -2 & 2
\end{pmatrix}
\cdot
\begin{pmatrix}
x_1 \\
x_2 \\
x_3
\end{pmatrix}
=
\begin{pmatrix}
1 \\
1
\end{pmatrix}
\end{equation*}
if and only if
\vspace{-2mm}
\begin{equation*}
\begin{pmatrix}
-2 & 1 \\
-1 & 3
\end{pmatrix}
\cdot
\begin{pmatrix}
3 & -1 & -4 \\
1 & -2 & 2
\end{pmatrix}
\cdot
\begin{pmatrix}
x_1 \\
x_2 \\
x_3
\end{pmatrix}
=
\begin{pmatrix}
-2 & 1 \\
-1 & 3
\end{pmatrix}
\cdot
\begin{pmatrix}
1 \\
1
\end{pmatrix}
\end{equation*}
or
\vspace{-1mm}
\begin{equation*}
\begin{matrix}
-5x_1 & - & 10x_3  &= -1,  & \text{ and} \\
-5x_2 & + & 10x_3 &=\; \; 2. &
\end{matrix}
\end{equation*}
Note that there are no integral solutions to these equations, so that there is no integral point along this extremal ray which can also be written as $(-8/5, -11/5, -9/10) + t(2,2,1)$ and so $R$ is not Gorenstein on the punctured spectrum
}
\end{Example}

\noindent{\bf Acknowledgement.} {FM was partially supported by EPSRC grant EP/R023379/1.}

\bibliographystyle{alpha}
\bibliography{Gor}{}

\end{document}